\let\@fnsymbol\@arabic
\begin{document}


\counterwithin{figure}{section}
\counterwithin{table}{section}

\theoremstyle{plain}
\newtheorem{theorem}{Theorem}[section]
\newtheorem{lemma}[theorem]{Lemma}
\newtheorem{proposition}[theorem]{Proposition}
\newtheorem{algorithm}  [theorem]{Algorithm}
\newtheorem{corollary}[theorem]{Corollary}
\newtheorem{problem}{Problem}

\theoremstyle{definition}
\newtheorem{definition}{Definition}[section]
\newtheorem{assumption}{Assumption}[section]
\newtheorem{example}{Example}[section]
\newtheorem*{notation}{Notation}

\theoremstyle{remark}
\newtheorem{remark}{Remark}[section]


\newcommand{\dis}{\displaystyle}
\newcommand{\norm}[1]{\left\|#1\right\|}
\newcommand{\newnorm}[1]{\left|\left|\left|#1\right|\right|\right|}
\newcommand{\abs}[1]{\left\vert#1\right\vert}
\newcommand{\spr}[1]{\left\langle\,#1\,\right\rangle}
\newcommand{\kl}[1]{\left(#1\right)}
\newcommand{\Kl}[1]{\left\{#1\right\}}
\newcommand{\vl}{\, \vert \,}
\newcommand{\supp}[1]{\text{supp}\left(#1\right)}

\newcommand{\RED}[1]{\textcolor{red}{#1}}
\newcommand{\BLUE}[1]{\textcolor{blue}{#1}}
\definecolor{aog}{rgb}{0.0, 0.5, 0.0}
\newcommand{\GREEN}[1]{\textcolor{aog}{#1}}
\newcommand{\gray}[1]{\textcolor{gray}{#1}}

\newcommand{\LandauO}{\mathcal{O}}
\newcommand{\Landauo}{o}

\newcommand{\R}{\mathbb{R}} 
\newcommand{\C}{\mathbbThe{C}}
\newcommand{\N}{\mathbb{N}}
\newcommand{\Z}{\mathbb{Z}}
\newcommand{\Q}{\mathbb{Q}}

\newcommand{\mfunc}[1]{\text{#1} \,}
\def\div{\operatorname{div}} 
\newcommand{\rot}{\mfunc{rot}}
\newcommand{\ess}{\mfunc{ess}}
\newcommand{\sgn}{\mfunc{sgn}}

\newcommand{\rge}[1]{\mathcal{R}(#1)}
\newcommand{\nsp}[1]{\mathcal{N}(#1)}
\newcommand{\dom}[1]{\mathcal{D}(#1)}

\newcommand{\xd}{x^\dagger}
\newcommand{\xa}{x_\alpha}
\newcommand{\xad}{x_\alpha^\delta}
\newcommand{\yd}{y^{\delta}}

\newcommand{\eps}{\varepsilon}
\newcommand{\wto}{\rightharpoonup}


\newcommand{\ddt}{\frac{\partial}{\partial t}}
\newcommand{\ddx}{\frac{\partial}{\partial x}}

\newcommand{\mm}{\text{mm}}
\newcommand{\cm}{\text{cm}}
\newcommand{\dm}{\text{dm}}
\newcommand{\m}{\text{m}}
\newcommand{\s}{\text{s}}


\newcommand{\RN}{{\R^N}}

\newcommand{\CmO}{{C^m(\Omega)}}
\newcommand{\CiO}{{C^\infty(\Omega)}}
\newcommand{\CizO}{{C^\infty_0(\Omega)}}

\newcommand{\Lt}{{L_2}}
\newcommand{\LtO}{{L_2(\Omega)}}
\newcommand{\LtOp}{{L_2(\Omega')}}
\newcommand{\LtR}{{L_2(\R)}}
\newcommand{\LtRN}{{L_2(\R^N)}}

\newcommand{\Hs}{{H^s}}
\newcommand{\HsO}{{H^s(\Omega)}}
\newcommand{\HsR}{{H^s(\R)}}
\newcommand{\HsRN}{{H^s(\R^N)}}

\newcommand{\Hm}{{H^m}}
\newcommand{\HmO}{{H^m(\Omega)}}
\newcommand{\HmzO}{{H_0^m(\Omega)}}
\newcommand{\HmR}{{H^m(\R)}}
\newcommand{\HmRN}{{H^m(\R^N)}}

\newcommand{\vphi}{\varphi}
\newcommand{\laplace}{\Delta}

\newcommand{\uf}{\mathbf{u}}
\newcommand{\uft}{\tilde{\mathbf{u}}}
\newcommand{\vf}{\mathbf{v}}
\newcommand{\zf}{\mathbf{z}}
\newcommand{\zft}{\tilde{\mathbf{z}}}
\newcommand{\wf}{\mathbf{w}}
\newcommand{\ufhi}{\hat{\uf}^i}
\newcommand{\x}{\textbf{x}}
\newcommand{\xhi}{\hat{\x}^i}
\newcommand{\HoO}{{H^1(\Omega)}}
\newcommand{\intV}[1]{\int\limits_\Omega #1 \, dx}
\newcommand{\intVx}[1]{\int\limits_\Omega #1 \, d\mathbf{x}}
\newcommand{\nv}{\vec{\boldsymbol{n}}}
\newcommand{\mE}[1]{\mathcal{E}(#1)}
\newcommand{\E}{\mathcal{E}}

\newcommand{\mI}{\mathcal{I}}
\newcommand{\af}{\mathbf{a}}
\newcommand{\hf}{\mathbf{h}}
\newcommand{\gf}{\mathbf{g}}
\newcommand{\gfD}{\mathbf{g}_D}
\newcommand{\gfT}{\mathbf{g}_T}
\newcommand{\ff}{\mathbf{f}}
\newcommand{\pf}{\mathbf{p}}
\newcommand{\F}{\mathcal{F}}

\newcommand{\Msm}{\mathcal{M}_s(\underline{\mu})}
\newcommand{\mub}{\underline{\mu}}

\newcommand{\prox}{\operatorname{prox}}


\title{On the Intensity-based Inversion Method for Quantitative Quasi-Static Elastography}
\author{
Ekaterina Sherina\footnote{University of Vienna, Faculty of Mathematics, Oskar Morgenstern-Platz 1, 1090 Vienna, Austria (ekaterina.sherina@univie.ac.at), corresponding author}\,\,\textsuperscript{,}\footnote{Christian Doppler Laboratory for Mathematical Modeling and Simulation of Next Generations of Ultrasound Devices (MaMSi), Oskar Morgenstern-Platz 1, 1090 Vienna, Austria} ,
Simon Hubmer\footnote{Johannes Kepler University Linz, Institute of Industrial Mathematics, Altenbergerstra{\ss}e 69, A-4040 Linz, Austria, (simon.hubmer@jku.at)}}
\date{\today}
\maketitle

\begin{abstract}

In this paper, we consider the intensity-based inversion method (IIM) for quantitative material parameter estimation in quasi-static elastography. In particular, we consider the problem of estimating the material parameters of a given sample from two internal measurements, one obtained before and one after applying some form of deformation. These internal measurements can be obtained via any imaging modality of choice, for example ultrasound, optical coherence or photo-acoustic tomography. Compared to two-step approaches to elastography, which first estimate internal displacement fields or strains and then reconstruct the material parameters from them, the IIM is a one-step approach which computes the material parameters directly from the internal measurements. To do so, the IIM combines image registration together with a model-based, regularized parameter reconstruction approach. This combination has the advantage of avoiding some approximations and derivative computations typically found in two-step approaches, and results in the IIM being generally more stable to measurement noise. In the paper, we provide a full convergence analysis of the IIM within the framework of inverse problems, and detail its application to linear elastography. Furthermore, we discuss the numerical implementation of the IIM and provide numerical examples simulating an optical coherence elastography (OCE) experiment.

\medskip
\noindent \textbf{Keywords.} Inverse and Ill-Posed Problems, Quasi-Static Elastography, Image Registration, Material Parameter Estimation, Linear Elastography, Optical Coherence Elastography

\end{abstract}


\section{Introduction}\label{sect_introduction}

Elastography, as an imaging modality in general, aims at mapping the mechanical properties of a given sample which is subjected to some type of deformation \cite{Parker_2011,Ophir_1991,Singh_Hepburn_Kennedy_Larin_2025}. The idea behind this modality goes back to the palpation examination used by doctors, which is motivated by the different response of soft (i.e., healthy) and stiff (i.e., malignant) tissue to applied touch, turning it into an effective, albeit sometimes inaccurate, method for diagnostics over the years. Thanks to technological advancements, a number of elastographic techniques were developed in recent years to access information about the mechanical properties and stiffness of tissues and organs \cite{Manduca_Oliphant_Dresner_Mahowald_Kruse_Amromin_Felmlee_Greenleaf_Ehman_2001,ZaiMatMatSovHepMowKen21,SinTho2019,SigLiaKafChaWil2017,Singh_Hepburn_Kennedy_Larin_2025}, which are based on classical imaging modalities such as ultrasound (US), computerized tomography (CT), magnetic resonance imaging (MRI), or optical coherence tomography (OCT). By themselves, these modalities provide only qualitative information about the scanned sample, such as images showing some internal structure but without any physically meaningful values. However, in terms of diagnostic accuracy, one is interested in quantitative values of stiffness and sometimes strain mapped on top of the visualization of a sample, rather than only in qualitative images. By combining both, i.e., by performing elastography on top of a classical imaging modality, quantitative elastography was established as a new technique allowing for quantitative estimates of the mechanical properties of samples and tissues \cite{Manduca_Oliphant_Dresner_Mahowald_Kruse_Amromin_Felmlee_Greenleaf_Ehman_2001,Doyley_2012,ZaiMatMatSovHepMowKen21,SinTho2019,SigLiaKafChaWil2017}.

Historically, elastography was first pioneered in the 1990s in US imaging, where it is nowadays used to add mechanical contrast on top of imaging data, e.g., in cancer detection \cite{cespedes_elastography_1993, sommerfeld_prostatakarzinomdiagnostik_2003, garra_elastography_1997, sigrist_ultrasound_2017}, the evaluation of benign lesions of the musculoskeletal system \cite{kim_usefulness_2016}, or the assessment of tendon injuries \cite{prado-costa_ultrasound_2018}. The concept of elastography was also adapted in the field of MRI, where it is used to visualize the mechanical behavior of internal organs \cite{kruse_tissue_2000, kennedy_magnetic_2020, low_general_2016} and other human body parts, e.g., breast tumors\cite{sinkus_high-resolution_2000} or arterial walls \cite{kolipaka_chapter_2020}. Both US imaging and MRI work with resolutions in the millimeter range, but for some applications like early cancer detection \cite{highnam_mammographic_1997, karssemeijer_adaptive_1993}, investigation of various eye diseases \cite{edrington_keratoconus_1995, girard_translating_2015} or detection of arterial rigidifications \cite{segers_brief_2019}, micro-scale elastographic imaging is necessary. Applications in this imaging range became possible with the emergence of optical coherence elastography (OCE) in 1998 and the demonstration of the first results of using a time-domain optical coherence imaging system for elastography on a gelatin phantom, pork meat, and an in-vivo human finger \cite{Schmitt_1998}. Further advances followed the introduction of spectral domain OCT, which allows for high speed, phase sensitivity, and resolution. In particular, OCE has potential for the non-invasive identification of malignant formations inside the human skin, eye retina examination, or tissue biopsies during surgeries \cite{ZaiMatMatSovHepMowKen21,Wang_Larin_2015}.

Regardless of the concrete application or the specific imaging modality, quantitative elastography generally follows a common three-step workflow, which is briefly outlined below: 
    \begin{enumerate}
        \item Perturb the sample/tissue using some form of mechanical source/force.
        \item Measure the sample/tissue response to the deformation using a suitable imaging technique.
        \item Reconstruct the mechanical parameters from the measured deformation/response.
    \end{enumerate}
Each of these steps can be implemented differently depending on the chosen technique \cite{Doyley_2012,SinTho2019, SigLiaKafChaWil2017,ZaiMatMatSovHepMowKen21}. In Step~1, the deformation can for example be imposed on the sample in a quasi-static, harmonic or transient manner \cite{Doyley_2012}. The resulting deformation or the deformed sample is then recorded in Step~2, either on the boundary or everywhere inside the sample using an imaging modality such as US, MRI, CT, or OCT. Internal deformations in the form of displacement or strain, either spatially or spatio-temporally resolved, are evaluated either from successive scans of the sample before and after deformation \cite{DunKir01,RogPatFujBre04,Schmitt_1998,SunStaYan11,Krainz_Sherina_Hubmer_Liu_Drexler_Scherzer_2022,ZaiMatMatSovHepMowKen21,Sherina_Krainz_Hubmer_Drexler_Scherzer_2020,sherina_challenges_2021} or directly calculated from imaging data obtained during compression (e.g., in phase-sensitive OCT, the axial component of displacement is calculated from the phase difference between the scans \cite{kennedy_strain_2012,ZaiMatMatSovHepMowKen21}). In Step~3, in order to find mechanical parameters such as Young's modulus and the Poisson ratio, the Lam\'e parameters, the shear modulus and density or the shear wave speed, an inverse problem has to be formulated and solved with a material model fitting the experiment, e.g., linear elastic, different visco-elastic, hyper-elastic, plastic and combined models \cite{Doyley_2012}.

\tikzstyle{process} = [rectangle, draw, text centered, minimum height=3em, align=center, fill=gray!10]
\tikzstyle{processIP} = [rectangle, double, draw, text centered, minimum height=5em, minimum width=5em, align=center, fill=gray!30]
\tikzstyle{connector} = [draw, line width=2pt, -Latex]
\begin{figure}[ht!] \centering
\resizebox{0.8\textwidth}{!}
{
\begin{tikzpicture}
    \node [process] at (0,0) (a1) {\textbf{Image} sample};
    \node [process] at (0,-2) (a2) {\textbf{Apply load} \\ to sample};
    \node [process] at (0,-4) (a3) {\textbf{Image} sample};
    \node [process] at (2.5,-2) (a4) {\textbf{Imaging} \\ \textbf{data}};
    \node [draw=none] at (5,-0.7) {\textbf{IP1}};
    \node [processIP] at (5,-2) (a5) {\textbf{Solve} \\ motion \\ estimation \\ problem};
    \node [process] at (8,-2) (a6) {\textbf{Displacement} \\ or \textbf{strain}};
    \node [draw=none] at (11,-0.7) {\textbf{IP2}};
    \node [processIP] at (11,-2) (a7) {\textbf{Solve} \\ material \\ parameters \\ problem};
    \node [process] at (14,-2) (a8) {\textbf{Material} \\ \textbf{parameters}};
    \node [processIP] at (8,-4.5) (a9) {\textbf{Solve} \\ material \\ parameters \\ problem};
    \node [draw=none] at (8,-3.2) {\textbf{IP3}};
    \path [connector] (a1) -- (a2);
    \path [connector] (a2) -- (a3);
    \path [connector] (a3) -- (a4);
    \path [connector] (a1) -- (a4);
    \path [connector] (a4) -- (a5);
    \path [connector] (a5) -- (a6);
    \path [connector] (a6) -- (a7);
    \path [connector] (a7) -- (a8);
    \path [connector] (a4) -- (2.5,-4.5) -- (a9) -- (14,-4.5) -- (a8);
\end{tikzpicture}
}
\caption{Schematic depiction of the workflow in quasi-static quantitative elastography. Imaging of the sample before/after deformation and inverse problems IP1-IP3 for obtaining material parameters.}
\label{fig-1-schematic}
\end{figure}

Figure~\ref{fig-1-schematic} schematically depicts this general workflow in the case of a quasi-static elastography experiment, where the considered sample is imaged before and after deformation, in this case induced by an applied load. As indicated, there are two main approaches for obtaining material parameter estimates following the imaging of the sample, which may itself involve the solution of an inverse problem. The first, and most common one, is known as the \emph{two-step approach} to quantitative elastography, and involves the solution of two inverse problems: First, in inverse problem~1 (IP1), the displacement or strain inside the sample/tissue undergoing deformation is estimated from its imaging data. Then, in inverse problem~2 (IP2), the material parameters of the sample/tissue are reconstructed from the obtained displacement or strain estimates. IP1 and IP2 are typically considered separately from each other, partly because they belong to quite distinct classes of inverse problems. While IP1 often takes the form of a motion estimation or optical flow problem \cite{Sherina_Krainz_Hubmer_Drexler_Scherzer_2020,sherina_challenges_2021,Ammari_Bretin_Millien_Seppecher_Seo_2015}, the material parameter estimation problem IP2 usually falls within the class of PDE-based coefficient-estimation inverse problems, and is in general much better analyzed. Among the vast literature on identifiability of the material parameters, stability, and different reconstruction methods for IP2, see e.g.\ \cite{Bal_Bellis_Imperiale_2014,Bal_Uhlmann_2012,Bal_Uhlmann_2013,Barbone_Gokhale_2004,Barbone_Oberai_2007,Doyley_2012,Doyley_Meaney_Bamber_2000,Fehrenbach_Masmoudi_Souchon_2006,Gokhale_Barbone_Oberai_2008,Huang_Shih_1997,Jadamba_Khan_Raciti_2008,Ji_McLaughlin_Renzi_2003,McLaughlin_Renzi_2006,Oberai_Gokhale_Doyley_2004,Oberai_Gokhale_Feijoo_2003,Widlak_Scherzer_2015,Lechleiter_Schlasche_2017,Kirsch_Rieder_2016,Hubmer_Sherina_Neubauer_Scherzer_2018, Carrillo_Waters_2022,Gimperlein_Waters_2016,Ammari_Waters_Zhang_2015,Ammari_Bretin_Garnier_Kang_Lee_Wahab_2015,Ammari_Seo_Zhou_2015,Ammari_Garapon_Jouve_2010,Barbeiro_Henriques_Santos_2024,Imanuvilov_Isakov_Yamamoto_2003,Leonov_Sharov_Yagola_2017,Leonov_Sharov_Yagola_2018,Leonov_Sharov_Yagola_2021}. Note that many of these works deal with linear elastic material models and time-dependent sample measurements, often inspired by certain applications, but sometimes also for reasons of (numerical) stability and the beneficial uniqueness properties of the time-dependent case. However, in many applications, no dynamic imaging (and hence displacement data) is available and thus, one has to work within the quasi-static setting.

While the two-step approach has generally enjoyed a lot of success in the past, the separate consideration of IP1 and IP2 also has a number of downsides. For example, standard optical flow approaches for IP1 are often ``blind'' to physical restrictions in the elastography experiment, leading to unrealistic displacement field estimates. This issue, which was recently addressed in \cite{sherina_challenges_2021}, also compounds with the issue of non-uniqueness in the material parameter reconstruction problem IP2. Informally, one can say that both IP1 and IP2 may have a large nullspace, while their combination IP1 + IP2 may not. Additionally, since IP1 and IP2 are solved sequentially, data noise and numerical inaccuracies amplify through the decoupled treatment of the corresponding inverse problems. This then motivates so-called \emph{one-step approaches} to quantitative elastography: instead of sequentially solving IP1 and IP2, one considers the combined inverse problem~3 (IP3), i.e., the direct reconstruction of the material parameters of the sample/tissue undergoing deformation from its imaging data, without an intermediate calculation of the displacement or strain. In this paper, we particularly focus on the \textit{intensity-based inversion method} (IIM), an intensity-based approach specifically designed for solving the combined IP3. The IIM was introduced in \cite{Krainz_Sherina_Hubmer_Liu_Drexler_Scherzer_2022} for the specific application of recovering the Young's modulus of a set of silicone rubber samples imaged with OCT before and after compression. Combining image registration with a linear elasticity-based deformation model, the IIM was able to accurately reconstruct the Young's modulus for a wide range of sample configurations \cite{Krainz_Sherina_Hubmer_Liu_Drexler_Scherzer_2022}. 

Here, we consider a generalized variant of the IIM, applicable to any quasi-static elastography experiment, independent of the chosen imaging modality, and compatible with any deformation model. Taking the form of a minimization problem resembling Tikhonov regularization, the IIM has a number of advantages over two-step approaches: First, its flexibility wrt.\ the choice of the underlying material model allows for easy adaptation to different experimental settings and use cases. Second, the IIM can avoid the differentiation of noisy displacement/strain data, which is typically required in implementations of iterative regularization methods in two-step approaches. From a theoretical perspective, most two-step approaches require a certain regularity of the measured displacement/strain, which is however not attainable in practice. In contrast, the IIM works directly on the measured sample images, and thus avoids the differentiation of a reconstructed, and thus noisy, displacement/strain. Third, the IIM can easily be adapted to incorporate additional physical prior information, such as sample segmentation data, leading to a reduced dimensionality of the considered elastography problem. Finally, its relation to Tikhonov regularization (with a noisy operator) allows the IIM to be analyzed within the framework of inverse problems \cite{Engl_Hanke_Neubauer_1996}, which is our main focus in this paper. In particular, we establish convergence and convergence rate results for the IIM under minimal mathematical assumptions, which we then verify for the specific use case of linear elasticity. Furthermore, we discuss extensions of the IIM going beyond the quasi-static elastography setting, and demonstrate its practical usefulness on a number of numerical examples which, motivated by \cite{Krainz_Sherina_Hubmer_Liu_Drexler_Scherzer_2022}, simulate a quasi-static OCE experiment.

The outline of this paper is as follows: In Section~\ref{sect_setting}, we first introduce the precise mathematical setting of the quasi-static elastography problem considered in this paper. Then, in Section~\ref{sect_IIM} we define the IIM for the solution of this problem, and provide a full convergence analysis in Section~\ref{sect_convergence}, which in particular includes an order-optimal convergence rate. In Section~\ref{sect_extensions_practical}, we discuss several extensions and practical aspects of the IIM, and in Section~\ref{sect_appl_lin}, we show that all assumptions of its convergence analysis are satisfied for the specific case of linear elastography. Finally, in Section~\ref{sect_numerics}, we present a number of numerical examples simulating an OCE experiment, and end with a short conclusion in Section~\ref{sect_conclusion}.

\section{Setting and Inverse Problem Formulation}\label{sect_setting}

In this section, we introduce a precise mathematical formulation of the general quasi-static elastography experiment considered in this paper, and formulate the resulting inverse parameter estimation problem. 

First, recall that we are interested in reconstructing material parameters of a given sample from two internal measurements, one obtained before and one after applying some form of deformation. To model this mathematically, let $\Omega_1, \Omega_2 \subset \R^2$ denote two open, bounded subsets of $\R^2$, where $\Omega_1$ represents the initial, non-deformed geometry of the sample, and $\Omega_2$ correspond to the sample's shape after deformation. Both non-deformed and deformed states of the sample are scanned using a tomographic imaging modality of choice, which results in two internal measurements represented by $\mI_1:\Omega_1\to\R$, $\mI_1=\mI_1(\x)$, and $\mI_2:\Omega_2\to\R$, $\mI_2=\mI_2(\mathbf{x})$, respectively. Here, $\mI_1$ corresponds to the image of the sample before deformation, and $\mI_2$ to that after deformation. Furthermore, assume that the quasi-static deformation of the sample can be described via the set of equations
    \begin{subequations}\label{eq:bvp}
    \begin{alignat}{1}
        \mathcal{L}(\af,\uf) & = \mathbf{f} \,, \quad \text{in} \quad \Omega_1 \,, \label{eq:pde-main}
        \\ 
        \mathcal{B}(\af,\uf) & = \hf \,, \quad \text{on} \quad \partial\Omega_1\setminus\Gamma_D \,,\label{eq:bc-any}
        \\
        \uf & = \gf \,,  \quad \text{on} \quad \Gamma_D \,. \label{eq:bc-dir} 
    \end{alignat}
    \end{subequations}
where $\uf:\Omega_1\to\R^2$ is the displacement and $\af:\Omega_1\to\R^n$ is a vector of material parameters. We note that $\uf = \uf(\x,\af)$ and $\af = \af(\x)$, and that it is the (spatially varying) material parameter vector $\af$ that we are interested in recovering. The concrete form of the operators $\mathcal{L}$ and $\mathcal{B}$, which encode some form of deformation law relating the material parameters $\af$ to the deformation $\uf$, is not essential for the upcoming analysis. All that is required is that under suitable assumptions on $\mathcal{L}$, $\mathcal{B}$, $\af$, $\mathbf{f}$, $\hf$, and $\gf$, the displacement $\uf$ is uniquely determined by \eqref{eq:pde-main}-\eqref{eq:bc-dir}. Note that we may have $\Gamma_D = \emptyset$ or $\Gamma_D = \partial \Omega_1$.

\begin{example}
In Section~\ref{sect_appl_lin}, we consider the specific setting of linear elasticity, where $\mathcal{L}$ is a second-order elliptic differential operator. In this case, condition \eqref{eq:bc-dir} models both fixed boundaries and applied external displacements or shear, while \eqref{eq:bc-any} is used to define traction(-free) boundary conditions.
\end{example}

\begin{figure}[ht!]
    \centering
    \begin{tikzpicture}[x=0.75pt,y=0.75pt,yscale=-0.85,xscale=0.85]
       
        \draw [line width=1.5]  (150,10) -- (370,10) -- (370,160) -- (150,160) -- cycle;
        \draw [dash pattern={on 4.5pt off 4.5pt}, fill=gray!15][line width=0.75]  (152,42) -- (368,42) -- (368,158) -- (152,158) -- cycle;

        \draw (250,75) node [anchor=north west][inner sep=0.75pt][font=\Large]{$\Omega_1$};
        \draw (155,135) node [anchor=north west][inner sep=0.75pt][font=\Large]{$\Omega$};
    
        \draw (375,65) node [anchor=north west][inner sep=0.75pt][align=left]{$G(\af)(\x):=\x+\uf(\x,\af)$};
        \draw (380,85) -- (500,85) -- (530,85);
        \draw [shift={(530,85)}, rotate = 180] [color={rgb, 255:red, 0; green, 0; blue, 0 }][line width=0.75](10.93,-3.29) .. controls (6.95,-1.4) and (3.31,-0.3) .. (0,0) .. controls (3.31,0.3) and (6.95,1.4) .. (10.93,3.29);

        \draw (565,10) -- (565,40);
        \draw (580,10) -- (580,40);
        \draw (595,10) -- (595,40);
        \draw (610,10) -- (610,40);
        \draw (625,10) -- (625,40);
        \draw (640,10) -- (640,40);
        \draw (655,10) -- (655,40);
        \draw (670,10) -- (670,40);
        \draw (685,10) -- (685,40);
        \draw (700,10) -- (700,40);
        \draw (715,10) -- (715,40);
        \draw (730,10) -- (730,40);
        \draw (745,10) -- (745,40);
        \draw (760,10) -- (760,40);
        \draw (775,10) -- (775,40);
        \draw (790,10) -- (790,40);
        \draw [shift={(565,40)}, rotate = -90] [color={rgb, 255:red, 0; green, 0; blue, 0 }][line width=0.75](10.93,-3.29) .. controls (6.95,-1.4) and (3.31,-0.3) .. (-1,0) .. controls (3.31,0.3) and (6.95,1.4) .. (10.93,3.29);
        \draw [shift={(580,40)}, rotate = -90] [color={rgb, 255:red, 0; green, 0; blue, 0 }][line width=0.75](10.93,-3.29) .. controls (6.95,-1.4) and (3.31,-0.3) .. (-1,0) .. controls (3.31,0.3) and (6.95,1.4) .. (10.93,3.29);
        \draw [shift={(595,40)}, rotate = -90] [color={rgb, 255:red, 0; green, 0; blue, 0 }][line width=0.75](10.93,-3.29) .. controls (6.95,-1.4) and (3.31,-0.3) .. (-1,0) .. controls (3.31,0.3) and (6.95,1.4) .. (10.93,3.29);
        \draw [shift={(610,40)}, rotate = -90] [color={rgb, 255:red, 0; green, 0; blue, 0 }][line width=0.75](10.93,-3.29) .. controls (6.95,-1.4) and (3.31,-0.3) .. (-1,0) .. controls (3.31,0.3) and (6.95,1.4) .. (10.93,3.29);
        \draw [shift={(625,40)}, rotate = -90] [color={rgb, 255:red, 0; green, 0; blue, 0 }][line width=0.75](10.93,-3.29) .. controls (6.95,-1.4) and (3.31,-0.3) .. (-1,0) .. controls (3.31,0.3) and (6.95,1.4) .. (10.93,3.29);
        \draw [shift={(640,40)}, rotate = -90] [color={rgb, 255:red, 0; green, 0; blue, 0 }][line width=0.75](10.93,-3.29) .. controls (6.95,-1.4) and (3.31,-0.3) .. (-1,0) .. controls (3.31,0.3) and (6.95,1.4) .. (10.93,3.29);
        \draw [shift={(655,40)}, rotate = -90] [color={rgb, 255:red, 0; green, 0; blue, 0 }][line width=0.75](10.93,-3.29) .. controls (6.95,-1.4) and (3.31,-0.3) .. (-1,0) .. controls (3.31,0.3) and (6.95,1.4) .. (10.93,3.29);
        \draw [shift={(670,40)}, rotate = -90] [color={rgb, 255:red, 0; green, 0; blue, 0 }][line width=0.75](10.93,-3.29) .. controls (6.95,-1.4) and (3.31,-0.3) .. (-1,0) .. controls (3.31,0.3) and (6.95,1.4) .. (10.93,3.29);
        \draw [shift={(685,40)}, rotate = -90] [color={rgb, 255:red, 0; green, 0; blue, 0 }][line width=0.75](10.93,-3.29) .. controls (6.95,-1.4) and (3.31,-0.3) .. (-1,0) .. controls (3.31,0.3) and (6.95,1.4) .. (10.93,3.29);
        \draw [shift={(700,40)}, rotate = -90] [color={rgb, 255:red, 0; green, 0; blue, 0 }][line width=0.75](10.93,-3.29) .. controls (6.95,-1.4) and (3.31,-0.3) .. (-1,0) .. controls (3.31,0.3) and (6.95,1.4) .. (10.93,3.29);
        \draw [shift={(715,40)}, rotate = -90] [color={rgb, 255:red, 0; green, 0; blue, 0 }][line width=0.75](10.93,-3.29) .. controls (6.95,-1.4) and (3.31,-0.3) .. (-1,0) .. controls (3.31,0.3) and (6.95,1.4) .. (10.93,3.29);
        \draw [shift={(730,40)}, rotate = -90] [color={rgb, 255:red, 0; green, 0; blue, 0 }][line width=0.75](10.93,-3.29) .. controls (6.95,-1.4) and (3.31,-0.3) .. (-1,0) .. controls (3.31,0.3) and (6.95,1.4) .. (10.93,3.29);
        \draw [shift={(745,40)}, rotate = -90] [color={rgb, 255:red, 0; green, 0; blue, 0 }][line width=0.75](10.93,-3.29) .. controls (6.95,-1.4) and (3.31,-0.3) .. (-1,0) .. controls (3.31,0.3) and (6.95,1.4) .. (10.93,3.29);
        \draw [shift={(760,40)}, rotate = -90] [color={rgb, 255:red, 0; green, 0; blue, 0 }][line width=0.75](10.93,-3.29) .. controls (6.95,-1.4) and (3.31,-0.3) .. (-1,0) .. controls (3.31,0.3) and (6.95,1.4) .. (10.93,3.29);
        \draw [shift={(775,40)}, rotate = -90] [color={rgb, 255:red, 0; green, 0; blue, 0 }][line width=0.75](10.93,-3.29) .. controls (6.95,-1.4) and (3.31,-0.3) .. (-1,0) .. controls (3.31,0.3) and (6.95,1.4) .. (10.93,3.29);
        \draw [shift={(790,40)}, rotate = -90] [color={rgb, 255:red, 0; green, 0; blue, 0 }][line width=0.75](10.93,-3.29) .. controls (6.95,-1.4) and (3.31,-0.3) .. (-1,0) .. controls (3.31,0.3) and (6.95,1.4) .. (10.93,3.29);

        \draw (545,15) node [anchor=north west][inner sep=0.75pt][align=left]{$\gf$};
   
        \draw [line width=1.5]  (565,40) -- (800,40);
        \draw [line width=1.5] (800,160) -- (565,160);
        \draw [dash pattern={on 4.5pt off 4.5pt}, fill=gray!15][line width=0.75]  (565,42) -- (800,42) -- (800,158) -- (565,158) -- cycle;
        \draw [line width=1.5] (800,160) arc
        [
        start angle=90,        end angle=-90,
        x radius=0.5cm,        y radius =1.58cm
        ] ; 
        \draw [line width=1.5] (565,160) arc
        [
        start angle=90,        end angle=270,
        x radius=0.5cm,        y radius =1.58cm
        ] ; 
        \draw (670,75) node [anchor=north west][inner sep=0.75pt][font=\Large]{$\Omega_2$};
        \draw (570,135) node [anchor=north west][inner sep=0.75pt][font=\Large]{$\Omega$};
    
    \end{tikzpicture}
    \caption{Illustration of our mathematical setting for quasi-static elastography: initial, non-deformed object geometry $\Omega_1$ (left) and deformed object geometry $\Omega_2$ (right); here $\gf$, is a fixed applied downward displacement. The shaded domain in both images corresponds to one possible choice of $\Omega$ in \eqref{domain_condition}.} 
    \label{fig_subdivision_subdomains}
\end{figure}

Next, given the displacement $\uf = \uf(\x,\af)$, we define the corresponding deformation $G = G(\af)$ via
    \begin{equation*}
        G = G(\af) : \Omega_1 \to \R^2 \,, \qquad \x \mapsto G(\af)(\x) := \x + \uf(\x,\af) \,,
    \end{equation*}
which describes to where a point $\x \in \Omega_1$ is moved as a result of the deformation $\uf$. Hence, we obtain
    \begin{equation}\label{eq:IIM:init}
        (\mI_2 \circ G(\af))(\x) = \mI_2(\x+\uf(\x,\af)) = \mI_1(\mathbf{x}) \,,
    \end{equation}
a connection between the measured images $\mI_1$ and $\mI_2$ and the material parameters $\af$, which serves as the starting point for our further considerations. Next, observe that for a given $\af$, the connection \eqref{eq:IIM:init} is only well-defined if both $\x \in \Omega_1$ and $G(\af)(\x) \in \Omega_2$ hold. Hence, in the following we assume  
    \begin{equation}\label{domain_condition}
        \exists \, \Omega \subset \Omega_1  \,\, \forall \, \af \in \mathcal{M} \,\, \forall \, \x \in \Omega \, : \, G(\af)(\x) \in \Omega_2 \,,
    \end{equation}
where $\mathcal{M}$ is a predefined admissible set of material parameters. Note that \eqref{domain_condition} is essentially a requirement on the considered elastography experiment, requiring that there is a domain $\Omega$ on which it makes sense to compare the measurements $\mI_1$ and $\mI_2$; cf.~Figure~\ref{fig_subdivision_subdomains}. Now, combining \eqref{eq:IIM:init} and \eqref{domain_condition}, we obtain 
    \begin{equation}\label{eq:IIM}
        (\mI_2 \circ G(\af))(\x)  = \mI_1(\mathbf{x}) \,, \qquad \forall \, \x \in \Omega \,,
    \end{equation}
which is a well-defined relation between $\mI_1$, $\mI_2$, and the material parameters $\af$. After these preliminaries, we can now formally define the quasi-static elastography problem considered in this paper:

\vspace{15pt}
\fbox{\parbox{0.9\textwidth}{\textbf{Quasi-Static Elastography Problem:} \textit{Compute the material parameter vector $\af(\x) \in \mathcal{M}$ from \eqref{eq:IIM}, where $\mI_1(\x)$ and $\mI_2(\x)$ are given measured images, the deformation $\uf(\x,\af)$ satisfies the deformation model \eqref{eq:bvp}, and the domain $\Omega$ satisfies \eqref{domain_condition} for a given admissible set $\mathcal{M}$.}}}
\vspace{15pt}

Stated like this, we can observe that our considered quasi-static elastography problem is essentially a combined image registration and parameter estimation problem; cf.\ the schematic of  Figure~\ref{fig-1-schematic}.

\section{The Intensity-based Inversion Method}\label{sect_IIM}

In this section, we introduce the \emph{intensity-based inversion method (IIM)} for solving the quasi-static elastography problem, which we then analyse within the framework of inverse problems. For this, we recall the connection \eqref{eq:IIM} between the measured images $\mI_1$, $\mI_2$, and the material parameters $\af$, i.e.,
    \begin{equation*}
        (\mI_2 \circ G(\af))(\x)  = \mI_1(\mathbf{x}) \,, \qquad \forall \, \x \in \Omega \,.
    \end{equation*}    
Now, assuming that both $\mI_1 \in L_2(\Omega)$ and $\mI_2 \circ G(\af) \in L_2(\Omega)$ for all $\af \in \mathcal{M}$, one possible approach for determining the material parameters $\af$ from $\mI_1$ and $\mI_2$ is to solve the minimization problem 
    \begin{equation*}
        \min\limits_{\af \in \mathcal{M}} \; \norm{\mI_2 \circ G(\af) - \mI_1}^2_{L_2(\Omega)}
    \end{equation*}  
However, the quasi-static elastography problem is ill-posed, and thus the above minimization problem is expected to be unstable, in particular if $\mI_1$ and $\mI_2$ contain measurement noise. Hence, the problem has to be stabilized, e.g., by adding a regularization functional $\mathcal{R}$. Doing so, we arrive at the IIM
    \begin{equation}\label{eq:mp1}
        \boxed{\qquad \, \min\limits_{\af \in \mathcal{M}} \; \mathcal{T}_\alpha(\af) \,, \qquad \text{where} \qquad \mathcal{T}_\alpha(\af) := \norm{\mI_2 \circ G(\af)- \mI_1}^2_{L_2(\Omega)} + \alpha \mathcal{R}(\af)\,, \qquad }
    \end{equation}
with $\alpha>0$ being a suitably chosen regularization parameter. In this paper, we consider the choice
    \begin{equation}\label{def_R_norm}
        \mathcal{R}(\af) := \norm{\af - \af_0}_X^2 \,,
    \end{equation}
where $X$ is a separable Hilbert space with $\mathcal{M} \subseteq X$ and $\af_0 \in X$. While this choice is sufficient for the applications we have in mind, we emphasize that other choices of $\mathcal{R}$ such as the TV-norm are also possible. These then require mainly technical modifications to the analysis presented in Section~\ref{sect_convergence}.

Next, note that for $\mathcal{T}_\alpha$ to be well-defined, we need that $\mI_2 \circ G(\af) \in L_2(\Omega)$ for all $\af \in \mathcal{M}$. To be able to guarantee this, we now make the following minimal set of assumptions for the IIM:

\begin{assumption}[Minimal assumptions for the IIM] \label{ass_minimal} \hfill
\begin{enumerate}
    \item The domains $\Omega_1, \Omega_2 \subset \R^2$ are open bounded subsets of $\R^2$ with Lipschitz continuous boundaries.
    \item The functional $\mathcal{R}$ is defined by \eqref{def_R_norm}, where $X$ is a separable Hilbert space, $\mathcal{M} \subseteq X$, and $\af_0 \in X$.
    \item The measured images $\mI_1$ and $\mI_2$ satisfy $\mI_1 \in L_2(\Omega_1)$ and $\mI_2 \in L_\infty(\Omega_2)$, respectively.
    \item For each $\af \in \mathcal{M}$, there exists a unique solution $\uf(x,\af) \in L_2(\Omega_1)$ of the set of equations \eqref{eq:bvp}.
    \item The domain $\Omega \subset \R^2$ is bounded, open, with a Lipschitz continuous boundary, and satisfies\eqref{domain_condition}. 
\end{enumerate}
\end{assumption}

Using the above assumptions, we obtain the following well-definedness result:

\begin{lemma}
Let Assumption~\ref{ass_minimal} hold. Then, $\mathcal{T}_\alpha(\af)$ introduced in \eqref{eq:mp1} is well-defined for all $\af \in \mathcal{M}$.
\end{lemma}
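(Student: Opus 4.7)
The plan is to verify that each summand in the definition
\[
\mathcal{T}_\alpha(\af) = \norm{\mI_2\circ G(\af)-\mI_1}_{L_2(\Omega)}^2 + \alpha\norm{\af-\af_0}_X^2
\]
is finite for every $\af\in\mathcal{M}$. The regularization term is immediate: since $\mathcal{M}\subseteq X$ by Assumption~\ref{ass_minimal}(2), both $\af$ and $\af_0$ lie in the Hilbert space $X$, so $\norm{\af-\af_0}_X^2$ is finite. For the data-fidelity term, I would split off the two contributions $\mI_1$ and $\mI_2\circ G(\af)$ separately and then use the triangle inequality in $L_2(\Omega)$.

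Handling $\mI_1$ is straightforward: condition \eqref{domain_condition} implies $\Omega\subset\Omega_1$, and $\mI_1\in L_2(\Omega_1)$ by Assumption~\ref{ass_minimal}(3), so the restriction $\mI_1|_\Omega$ lies in $L_2(\Omega)$. The real work is in showing $\mI_2\circ G(\af)\in L_2(\Omega)$. First I would note that $G(\af)(\x)=\x+\uf(\x,\af)$ defines a measurable map $\Omega\to\R^2$, since $\uf(\cdot,\af)\in L_2(\Omega_1)$ by Assumption~\ref{ass_minimal}(4) and the identity map is continuous. Next, condition \eqref{domain_condition} guarantees $G(\af)(\Omega)\subseteq\Omega_2$, so the composition makes sense pointwise.

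The remaining obstacle, and probably the most delicate point, is measurability of $\mI_2\circ G(\af)$ on $\Omega$: a generic $L_\infty$ equivalence class is only Lebesgue measurable, and composition with a Lebesgue-measurable map $G(\af)$ need not preserve Lebesgue measurability. I would handle this by selecting a Borel representative of $\mI_2$ (which exists since $\mI_2\in L_\infty(\Omega_2)$ can be modified on a null set to yield a Borel function), after which $\mI_2\circ G(\af)$ is Lebesgue measurable on $\Omega$ as the composition of a Borel map with a Lebesgue-measurable map.

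With measurability secured, the $L_2$ bound follows from the essential boundedness of $\mI_2$ together with the boundedness of $\Omega$ from Assumption~\ref{ass_minimal}(5):
\[
\norm{\mI_2\circ G(\af)}_{L_2(\Omega)}^2 = \int_\Omega |\mI_2(G(\af)(\x))|^2\,d\x \le \norm{\mI_2}_{L_\infty(\Omega_2)}^2\, |\Omega| < \infty.
\]
Combining this with $\mI_1\in L_2(\Omega)$ via the triangle inequality yields $\mI_2\circ G(\af)-\mI_1\in L_2(\Omega)$, so the fidelity term is finite, and thus $\mathcal{T}_\alpha(\af)$ is well-defined. The whole argument is essentially a bookkeeping exercise, with the only conceptual subtlety being the choice of a Borel representative of $\mI_2$ to make the composition measurable; this is why the assumption $\mI_2\in L_\infty$ (rather than merely $L_2$) was built into Assumption~\ref{ass_minimal}(3) in the first place.
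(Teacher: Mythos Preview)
Your proof is correct and follows essentially the same approach as the paper: both verify that $\mI_1\in L_2(\Omega)$ via $\Omega\subset\Omega_1$, that the regularization term is finite since $\mathcal{M}\subseteq X$, and that $\mI_2\circ G(\af)\in L_2(\Omega)$ by bounding it with $\norm{\mI_2}_{L_\infty(\Omega_2)}$ and the finite measure of $\Omega$. You are in fact more careful than the paper about the measurability of the composition $\mI_2\circ G(\af)$, explicitly passing to a Borel representative of $\mI_2$; the paper glosses over this point entirely.
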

\begin{proof}
Due to \eqref{domain_condition}, we have $\Omega \subset \Omega_1$, and thus $\mI_1 \in L_2(\Omega_1)$ implies  $\mI_1 \in L_2(\Omega)$. Furthermore, by its definition \eqref{def_R_norm}, $\mathcal{R}$ is well defined for all $\af \in X \supseteq \mathcal{M}$. Hence, it remains to show that $\mI_2 \circ G(\af) \in L_2(\Omega)$ for all $\af \in \mathcal{M}$. For this, note that for any $\af \in \mathcal{M}$ there holds $\uf(\cdot,\af) \in L_2(\Omega_1)$, and thus, since $\Omega_1$ is bounded, that $G(\af) \in L_2(\Omega_1)$. Furthermore, due to \eqref{domain_condition} there holds $G(\af)(\x) \in \Omega_2$ for all $\x \in \Omega$. Hence, 
    \begin{equation*}
        \norm{\mI_2 \circ G(\af)}_{L_2(\Omega)} 
        \leq 
        \abs{\Omega} \norm{\mI_2 \circ G(\af)}_{L_\infty(\Omega)} 
        \leq \abs{\Omega} \norm{\mI_2}_{L_\infty(\Omega_2)} 
        < \infty \,,
    \end{equation*}
where we have used $\mI_2 \in L_\infty(\Omega_2)$. This shows that $\mI_2 \circ G(\af) \in L_2(\Omega)$, completing the proof. 
\end{proof}

Note that the main conceptual advantage of the IIM  defined in \eqref{eq:IIM} is its combined treatment of the image registration and parameter estimation steps typically found in other elastography approaches. In particular, the displacement field $\uf(x,\af)$ is always a physically plausible displacement field for any $\af \in \mathcal{M}$, which cannot be easily guaranteed in split approaches, where the image registration needs to be supplemented by physical priors. Moreover, the restriction to physically plausible displacement fields also strongly limit the non-uniqueness typically observed in other image registration approaches. 

Furthermore, note that Assumption~\ref{ass_minimal} places only very minimal requirements on the measured images $\mI_1$ and $\mI_2$. While in the upcoming convergence analysis we have to increase the required smoothness of these images, in a numerical implementation of the IIM this increase is not necessary. Also, note that our assumption $\af \in \mathcal{M} \subset X$ encompasses the commonly encountered setting
    \begin{equation*}
        \af(\x) = \sum_{k=1}^{K} a_k \chi_{D_k}(\x) \,,
    \end{equation*}
where the domains $D_k \subseteq \Omega_1$ are known, but the constant parameters $a_k \in \mathcal{M} \subseteq \R^n$ are unknown. This corresponds to a sample with known inclusion locations but unknown material parameter values.

\section{Convergence analysis of the IIM}\label{sect_convergence}

In this section, we present a convergence analysis of the IIM defined in \eqref{eq:mp1} within the framework of inverse problems \cite{Engl_Hanke_Neubauer_1996,Scherzer_Grasmair_Grossauer_Haltmeier_Lenzen_2008}. As noted before, the quasi-static elastography problem is ill-posed, and thus in particular unstable with respect to noise in the measured images $\mI_1$ and $\mI_2$. Hence, in the IIM the regularization functional $\mathcal{R}$ was added in order to stabilize the problem. The resulting functional 
    \begin{equation*}
        \mathcal{T}_\alpha(\af) = \norm{\mI_2 \circ G(\af)- \mI_1}^2_{L_2(\Omega)} + \alpha \mathcal{R}(\af)
    \end{equation*}
closely resembles a classic Tikhonov functional \cite{Engl_Hanke_Neubauer_1996,Scherzer_Grasmair_Grossauer_Haltmeier_Lenzen_2008}. In fact, together with the definition
    \begin{equation}\label{def_F}
        \mathcal{F}(\af) : \, D(\mathcal{F}) =: \mathcal{M} \subseteq X \to Y := L_2(\Omega) \,, \qquad \af \mapsto \F(\af) := \mI_2 \circ G(\af) \,,
    \end{equation}
the functional $\mathcal{T}_\alpha$ in the IIM \eqref{eq:mp1} can be written in the standard form
    \begin{equation*}
        \mathcal{T}_\alpha(\af) = \norm{\F(\af) - \mI_1}^2_{Y} + \alpha \norm{\af - \af_0}_X^2 \,,
    \end{equation*}
to which the classical theory of Tikhonov regularization may be applied \cite{Engl_Hanke_Neubauer_1996,Scherzer_Grasmair_Grossauer_Haltmeier_Lenzen_2008}. However, one key difference to the classical setting is that in our case, both measured images $\mI_1$ and $\mI_2$ are typically contaminated by noise, and thus by definition the operator $\F$ is noisy as well. Fortunately, this setting was already implicitly considered in \cite{Neubauer_Scherzer_1990}, as well as in a more general Banach space setting in \cite{Lu_Fleming_2012}. In this paper, we make use of these results to obtain a convergence analysis of our proposed IIM.

In order to indicate that the measured images contain noise, from now on we write $\mI_1^\delta$ and $\mI_2^\delta$ instead of $\mI_1$ and $\mI_2$, respectively. Furthermore, we also consider the case that the displacement field $\uf(\x,\af)$ is contaminated by noise as well. This may for example be the case if some or all of the involved quantities in \eqref{eq:bvp} are only known inexactly. Correspondingly, we write $\uf^\delta(\x,\af)$, $G^\delta(\af)$, and set
    \begin{equation}\label{def_Fd}
        \F^\delta(\af) : \, D(\F^\delta) =: \mathcal{M} \subseteq X \to Y := L_2(\Omega) \,, \qquad \af \mapsto \F^\delta(\af) := \mI_2^\delta \circ G^\delta(\af) \,.
    \end{equation}   
Furthermore, in the upcoming convergence analysis, we distinguish between two specific noise cases:
    \begin{enumerate}
        \item \textbf{Restricted noise case}: Only the image $\mI_1^\delta$ is contaminated by noise, and thus the IIM reads
            \begin{equation}\label{def_Tad_restricted}
                \min\limits_{\af \in \mathcal{M}} \mathcal{T}_\alpha^\delta(\af) \,, 
                \qquad \text{where} \qquad 
                \mathcal{T}_\alpha^\delta(\af) := \norm{\F(\af) - \mI_1^\delta}^2_{L_2(\Omega)} + \alpha \norm{\af - \af_0}_X^2 \,.
            \end{equation}
        \item \textbf{Full noise case}: Both $\mI_1^\delta$, $\mI_2^\delta$, and $\uf^\delta(x,\af)$ are contaminated by noise, and thus the IIM reads
            \begin{equation}\label{def_Tad_full}
                \min\limits_{\af \in \mathcal{M}} \mathcal{T}_\alpha^\delta(\af) \,, 
                \qquad \text{where} \qquad 
                \mathcal{T}_\alpha^\delta(\af) := \norm{\F^\delta(\af) - \mI_1^\delta}^2_{L_2(\Omega)} + \alpha \norm{\af - \af_0}_X^2 \,.
            \end{equation}
    \end{enumerate}
For both the restricted and full noise case, we require the following minimal set of model assumptions:

\begin{assumption}[Minimal model assumptions for the IIM] \label{ass_model_minimal} \hfill
\begin{enumerate}
    \item The operator $\F(\af) = \mI_2 \circ G(\af)$ as defined in \eqref{def_F} is continuous and weakly sequentially closed.
    \item For a given (noise-free) image $\mI_1$ there exists an $\af_0$-minimum-norm-solution (MNS) $\af^*$, i.e.,
        \begin{equation*}
            \mI_2 \circ G(\af^*) = \mI_1 \,,
            \qquad \text{and} \qquad
            \norm{\af^*-\af_0}_X = \min_{\af\in\mathcal{M}}\Kl{ \norm{\af-\af_0}_X \, \vert \, \mI_2 \circ G(\af^*) = \mI_1 } \,.
        \end{equation*}
    \item There is a noise level $\delta_{\mI_1} > 0$ such that  
        \begin{equation*}
            \norm{ \mI_1 - \mI_1^\delta }_\LtO \leq \delta_{\mI_1} \,.
        \end{equation*}
\end{enumerate}
\end{assumption}

\begin{remark}
Clearly, the restricted noise case is only one specific instance of the full noise case. The main benefit of considering these two separately is that the convergence analysis of the restricted noise case requires less stringent assumptions than that of the full noise case. While in practice both $\mI_1^\delta$ and $\mI_2^\delta$ are contaminated by noise, one typically selects one of them, $\mI_1^\delta$ or $\mI_2^\delta$, as the reference image. Hence, if in addition the deformation $\uf$ is noise-free, one may informally ascribe the noise in the reference image $\mI_1^\delta$ to $\mI_2^\delta$, and thus approximately transform the full into the restricted noise case.
\end{remark}

\subsection{Convergence analysis I: restricted noise case}

First, we consider the restricted noise case \eqref{def_Tad_restricted}, for which we obtain the following convergence result.

\begin{theorem}\label{theorem_convergence_restricted}
Let Assumption~\ref{ass_minimal} and \ref{ass_model_minimal} hold. Furthermore, let $\alpha = \alpha(\delta_{\mI_1})$ be chosen such that
    \begin{equation*}
        \alpha(\delta_{\mI_1}) \to 0 \,,
        \qquad
        \text{and}
        \qquad
        \frac{(\delta_{\mI_1})^2}{\alpha(\delta_{\mI_1})} \to 0 \,,
        \qquad
        \qquad
        \text{for}
        \qquad
        \delta_{\mI_1}\to 0 \,.
    \end{equation*}
Then every sequence $\Kl{\af_k := \af_{\alpha(\delta_k)}^{\delta_k}}_{k\in\N}$ with $\delta_k \to 0$ as $k\to \infty$, and where $\af_k$ is a minimizer of \eqref{def_Tad_restricted} has a convergent subsequence $\boldsymbol{a}_{n_k}$. Furthermore, the limit of every convergent subsequence is an $\af_0$ minimum norm solution. Moreover, if the $\af_0$ minimum norm solution $\af^\dagger$ is unique, then
    \begin{equation*}
        \lim\limits_{\boldsymbol{\delta}_k \to 0} \af_{\alpha(\boldsymbol{\delta}_k)}^{\boldsymbol{\delta}_k} = \af^\dagger\,.
    \end{equation*}
\end{theorem}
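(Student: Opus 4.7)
The proof follows the classical Tikhonov convergence paradigm from \cite{Engl_Hanke_Neubauer_1996,Neubauer_Scherzer_1990}, tailored to the noisy-image setting. The starting point is the minimizer inequality: since $\af_k$ minimizes $\mathcal{T}_{\alpha(\delta_k)}^{\delta_k}$ over $\mathcal{M}$, and $\af^*\in\mathcal{M}$ with $\F(\af^*) = \mI_1$ is admissible, we would write
\begin{equation*}
\|\F(\af_k) - \mI_1^{\delta_k}\|_Y^2 + \alpha(\delta_k)\,\|\af_k - \af_0\|_X^2
\;\leq\; \delta_k^2 + \alpha(\delta_k)\,\|\af^* - \af_0\|_X^2,
\end{equation*}
using Assumption~\ref{ass_model_minimal}(3) for the right-hand side. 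Dropping the first term on the left and dividing by $\alpha(\delta_k)$, together with $\delta_k^2/\alpha(\delta_k)\to 0$, yields $\limsup_k\|\af_k - \af_0\|_X \leq \|\af^* - \af_0\|_X$. Dropping the second term on the left and using $\alpha(\delta_k)\to 0$ yields $\|\F(\af_k) - \mI_1^{\delta_k}\|_Y \to 0$, and hence by the triangle inequality also $\F(\af_k) \to \mI_1$ in $Y$.

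Since $\{\af_k - \af_0\}$ is bounded in the Hilbert space $X$, a subsequence $\af_{n_k}$ satisfies $\af_{n_k}\wto\tilde{\af}$ weakly. The weak sequential closedness of $\F$ (Assumption~\ref{ass_model_minimal}(1)) combined with $\F(\af_{n_k})\to\mI_1$ in $Y$ then gives $\tilde{\af}\in\mathcal{M}$ with $\F(\tilde{\af}) = \mI_1$, so $\tilde{\af}$ solves the noise-free problem. Weak lower semicontinuity of the Hilbert-space norm together with the minimizer inequality yields
\begin{equation*}
\|\tilde{\af} - \af_0\|_X
\;\leq\; \liminf_k \|\af_{n_k} - \af_0\|_X
\;\leq\; \limsup_k \|\af_{n_k} - \af_0\|_X
\;\leq\; \|\af^* - \af_0\|_X.
\end{equation*}
Because $\af^*$ is an $\af_0$-MNS, this chain collapses to equalities: $\tilde{\af}$ is itself an $\af_0$-MNS, and $\|\af_{n_k}-\af_0\|_X \to \|\tilde{\af}-\af_0\|_X$. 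Combined with the weak convergence, the Radon--Riesz property of Hilbert spaces upgrades this to $\af_{n_k}\to\tilde{\af}$ strongly in $X$, which gives the convergent subsequence claim.

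For the final statement, if the $\af_0$-MNS $\af^\dagger$ is unique, then every subsequence of $\{\af_k\}$ admits, by the argument above, a further subsequence converging strongly to $\af^\dagger$, and a standard subsubsequence argument forces the whole sequence to converge to $\af^\dagger$. I expect the only genuinely nontrivial step to be the limit passage in the nonlinear term $\F(\af_{n_k})$: since $\F$ is not linear, weak continuity is too much to hope for in general, and weak sequential closedness (Assumption~\ref{ass_model_minimal}(1)) is doing the real work. Everything else is bookkeeping from the minimizer inequality and properties of Hilbert-space norms; a non-Hilbertian choice of $\mathcal{R}$ would force replacing the Radon--Riesz step with an analogue tailored to that geometry.
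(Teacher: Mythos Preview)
Your proof is correct and is precisely the classical Tikhonov convergence argument that the paper invokes by citation: the paper's own proof consists of a single sentence referring to the standard result in \cite{Engl_Hanke_Neubauer_1996}, and you have simply written out that argument in full. The minimizer inequality, boundedness, weak compactness, weak sequential closedness, and Radon--Riesz upgrade are exactly the steps underlying the cited theorem, so there is no difference in approach.
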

\begin{proof}
Using Assumption~\ref{ass_minimal} and \ref{ass_model_minimal}, this follows from classic Tikhonov convergence results \cite{Engl_Hanke_Neubauer_1996}.
\end{proof}

For establishing convergence rates, we require additional differentiability assumptions on $\mI_2$ and $\uf$.

\begin{assumption}[Differentiability assumptions for the IIM] \label{ass_differentiability} \hfill
\begin{enumerate}
    \item The (noise-free) image $\mI_2$ is continuously Fr\'echet differentiable, satisfying $\mI_2 \in W^{1,\infty}(\Omega_2)$ with 
        \begin{equation}\label{Lipschitz_I1}
            C_L := \norm{\mI_2}_{W^{1,\infty}(\Omega_2)} < \infty \,.
        \end{equation}
    \item The function $\uf$ is continuously Fr\'echet differentiable wrt.\ $\af$, and there are $C_1^*, C_2^*,C_3^* > 0$ with
        \begin{align}
            \norm{\uf'(\af^*) \hf}_{L_\infty(\Omega)} &\leq C_1^* \norm{\hf}_X \,, \label{Lipschitz_u_infty}
            \\
            \norm{ \uf(\af^*) - \uf(\af)}_{L_2(\Omega)} &\leq C_2^* \norm{\af^* - \af}_X \,, \label{Lipschitz_u}
            \\
            \norm{\uf'(\af^*)\hf - \uf'(\af)\hf}_{L_2(\Omega)} &\leq C_3^* \norm{\af^*-\af}_X \norm{\hf}_X \,, \label{Lipschitz_Du}
        \end{align}
    for all $\hf \in \mathcal{M}$ and $\af \in \mathcal{M} \cap B_\eps(\af^*)$, where $\af^*$ is an $\af_0$-MNS and $\epsilon>2\norm{\af^* - \af_0}_X$.
\end{enumerate}
\end{assumption}

Using these assumptions, we now establish some necessary derivative estimates for the operator $\mathcal{F}$.

\begin{proposition}\label{prop_differentiability}
Let Assumption~\ref{ass_minimal}, \ref{ass_model_minimal}, and \ref{ass_differentiability} hold. Then the operator $\F$ defined in \eqref{def_F} satisfies
    \begin{equation}\label{F_continuous}
         \norm{\F(\af_1) - \F(\af_2)}_{L_2(\Omega)} \leq C_L \norm{ \uf(\af_1) - \uf (\af_2)}_{L_2(\Omega)}  \,,
        \qquad 
         \forall \, \af_1,\af_2 \in \mathcal{M} \,,
     \end{equation}
and is continuous and continuously Fr\'echet differentiable with
    \begin{equation}\label{F_Frechet}
        \F'(\af) \hf = (\mI_2' \circ G(\af)) \uf'(\af) \hf \,,
        \qquad 
        \forall \, \af \,, \hf \in \mathcal{M} \,.
    \end{equation}
Furthermore, for all $\hf \in \mathcal{M}$ and $\af \in \mathcal{M} \cap B_\eps(\af^*)$ there holds
    \begin{equation}\label{DF_Lipschitz}
        \norm{\F'(\af^*)\hf - \F'(\af)\hf}_{L_2(\Omega)} \leq C \norm{\af^* - \af}_{X} \norm{\hf}_X \,,
    \end{equation}
where $C:= C_L \kl{C_1^*C_2^* + C_3^*}$, $\af^*$ is an $\af_0$-MNS, and $\epsilon>0$ is as in Assumption~\ref{ass_differentiability}.
\end{proposition}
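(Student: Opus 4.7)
My plan is to handle the three claims of the proposition in sequence, all via the chain-rule structure $\F(\af) = \mI_2 \circ G(\af)$ with $G(\af)(\x) = \x + \uf(\af)(\x)$, and leaning on the $W^{1,\infty}$ regularity of $\mI_2$ together with the Fr\'echet/Lipschitz estimates for $\uf$ supplied by Assumption~\ref{ass_differentiability}. For the pointwise Lipschitz estimate \eqref{F_continuous}, I use that $\mI_2 \in W^{1,\infty}(\Omega_2)$ is globally Lipschitz with constant $C_L$; since $G(\af_i)(\x) \in \Omega_2$ for $\x \in \Omega$ by Assumption~\ref{ass_minimal}(5), this yields $|\F(\af_1)(\x) - \F(\af_2)(\x)| \leq C_L |\uf(\af_1)(\x) - \uf(\af_2)(\x)|$ almost everywhere, and \eqref{F_continuous} follows upon squaring and integrating over $\Omega$. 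Continuity of $\F$ on $\mathcal{M}$ is then obtained by combining \eqref{F_continuous} with the continuity of $\af \mapsto \uf(\af)$ in $L_2(\Omega_1)$, itself a consequence of the Fr\'echet differentiability of $\uf$.

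For the Fr\'echet-differentiability claim \eqref{F_Frechet}, I apply the chain rule. Using the fundamental theorem of calculus for the $C^1$ function $\mI_2$ together with the Fr\'echet expansion $\uf(\af+\hf) - \uf(\af) = \uf'(\af)\hf + o(\norm{\hf}_X)$ in $L_2(\Omega_1)$, I write
\[
\F(\af+\hf) - \F(\af) - (\mI_2' \circ G(\af))\uf'(\af)\hf = R_1 + R_2,
\]
where $R_1 = (\mI_2' \circ G(\af)) \cdot o(\norm{\hf}_X)$ is controlled via $\norm{\mI_2'}_{L_\infty(\Omega_2)} \leq C_L$, and $R_2$ collects the variation of $\mI_2'$ along the segment joining $G(\af)$ to $G(\af+\hf)$; the latter is $o(\norm{\hf}_X)$ in $L_2(\Omega)$ by uniform continuity of $\mI_2'$ on $\overline{\Omega_2}$ together with $\norm{\uf(\af+\hf) - \uf(\af)}_{L_2} \to 0$. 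This yields the derivative formula, and continuous dependence of $\F'(\af)$ on $\af$ then follows a fortiori from the Lipschitz-type estimate \eqref{DF_Lipschitz} established next.

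For \eqref{DF_Lipschitz}, I use the product-rule splitting
\[
\F'(\af^*)\hf - \F'(\af)\hf = \bigl[\mI_2' \circ G(\af^*) - \mI_2' \circ G(\af)\bigr]\uf'(\af^*)\hf + (\mI_2' \circ G(\af))\bigl[\uf'(\af^*)\hf - \uf'(\af)\hf\bigr].
\]
The second summand is bounded in $L_2(\Omega)$ by $C_L C_3^* \norm{\af^*-\af}_X \norm{\hf}_X$ directly via $\norm{\mI_2'}_{L_\infty} \leq C_L$ and \eqref{Lipschitz_Du}. For the first summand, the key trick is to activate the $L_\infty$-bound \eqref{Lipschitz_u_infty}, which yields
\[
\norm{\bigl[\mI_2'\circ G(\af^*) - \mI_2'\circ G(\af)\bigr]\uf'(\af^*)\hf}_{L_2(\Omega)} \leq C_1^* \norm{\hf}_X \, \norm{\mI_2'\circ G(\af^*) - \mI_2'\circ G(\af)}_{L_2(\Omega)},
\]
and a Lipschitz estimate on $\mI_2'$ (with Lipschitz constant absorbed into $C_L$) combined with \eqref{Lipschitz_u} bounds the remaining factor by $C_L C_2^* \norm{\af^*-\af}_X$. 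Summing the two contributions gives the advertised $C = C_L(C_1^*C_2^* + C_3^*)$. The main obstacle I anticipate is precisely this last step: estimating $\norm{\mI_2' \circ G(\af^*) - \mI_2' \circ G(\af)}_{L_2}$ by $\norm{\uf(\af^*) - \uf(\af)}_{L_2}$ genuinely requires a Lipschitz bound on $\mI_2'$, which is stronger than the stated $\mI_2 \in W^{1,\infty}$. I would therefore either treat $C_L$ as also bounding the Lipschitz constant of $\mI_2'$ (effectively asking $\mI_2 \in C^{1,1}$), or reorganise the decomposition so that only differences of $\mI_2$ rather than of $\mI_2'$ enter the estimate; beyond this point the remainder of the argument is routine bookkeeping of Lipschitz constants.
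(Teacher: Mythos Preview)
Your proposal is correct and follows essentially the same route as the paper: the same pointwise Lipschitz bound for \eqref{F_continuous}, the chain rule for \eqref{F_Frechet}, and the identical product-rule splitting with the $L_\infty$--$L_2$ H\"older pairing for \eqref{DF_Lipschitz}. Your concern about needing a Lipschitz bound on $\mI_2'$ is apt, and the paper proceeds exactly as in your first option, invoking \eqref{Lipschitz_I1} to estimate $\norm{\mI_2'\circ G(\af^*) - \mI_2'\circ G(\af)}_{L_2(\Omega)} \leq C_L \norm{G(\af^*) - G(\af)}_{L_2(\Omega)}$, so $C_L$ is tacitly treated as also controlling the Lipschitz constant of $\mI_2'$.
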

\begin{proof}
First, let $\af_1, \af_2 \in \mathcal{M}$ be arbitrary but fixed. Then together with \eqref{Lipschitz_I1} there follows
    \begin{equation*}
    \begin{split}
        \norm{\F(\af_1) - \F(\af_2)}_{L_2(\Omega)} 
        &= 
        \norm{\mI_2 \circ G(\af_1) - \mI_2 \circ G(\af_2)}_{L_2(\Omega)} 
        \\
        &\overset{\eqref{Lipschitz_I1}}{\leq} 
        C_L \norm{ G(\af_1) - G(\af_2)}_{L_2(\Omega)} 
        =  
        C_L \norm{ \uf(\af_1) - \uf (\af_2)}_{L_2(\Omega)} \,,
    \end{split}
    \end{equation*}
which establishes \eqref{F_continuous}, and thus the continuity of $\uf$ also implies the continuity of $\F$. Next, since both $\mI_2$ and $\uf$ are also assumed to be continuously Fr\'echet differentiable, the Fr\'echet differentiability of $\F$ and \eqref{F_Frechet} follow from the chain rule. Finally, for $\hf \in \mathcal{M}$ and $\af \in \mathcal{M} \cap B_\eps(\af^*)$ we have
    \begin{equation}\label{helper_DF_01}
    \begin{split}
        &\norm{\F'(\af^*)\hf - \F'(\af)\hf}_{L_2(\Omega)}
        =
        \norm{(\mI_2' \circ G(\af^*)) \uf'(\af^*) \hf - (\mI_2' \circ G(\af)) \uf'(\af) \hf}_{L_2(\Omega)}
        \\
        & \qquad 
        \leq 
        \norm{(\mI_2'\circ G(\af^*)) \uf'(\af^*) \hf - (\mI_2' \circ G(\af)) \uf'(\af^*) \hf}_{L_2(\Omega)}
        \\
        & \qquad \qquad 
        +
        \norm{(\mI_2'\circ G(\af)) \uf'(\af^*) \hf - (\mI_2'\circ G(\af)) \uf'(\af) \hf}_{L_2(\Omega)} \,.
    \end{split} 
    \end{equation}
For the first term, we use \eqref{Lipschitz_I1}, \eqref{Lipschitz_u_infty}, and \eqref{Lipschitz_u} to obtain
    \begin{equation*}
    \begin{split}
        &\norm{(\mI_2' \circ G(\af^*)) \uf'(\af^*) \hf - (\mI_2'\circ G(\af)) \uf'(\af^*) \hf}_{L_2(\Omega)}
        =
        \norm{\kl{ \mI_2' \circ G(\af^*) - \mI_2'\circ G(\af)} \uf'(\af^*) \hf}_{L_2(\Omega)}
        \\
        & \qquad 
        \leq
        \norm{ \mI_2' \circ G(\af^*) - \mI_2'\circ G(\af)}_{L_2(\Omega)}
        \norm{\uf'(\af^*) \hf}_{L_\infty(\Omega)}
        \\
        & \qquad 
        \leq
        C_L \norm{ G(\af^*) - G(\af)}_{L_2(\Omega)}
        C_1^* \norm{\hf}_{X}
        \\
        & \qquad 
        =
        C_L C_1^* \norm{ \uf(\af^*) - \uf(\af)}_{L_2(\Omega)} \norm{\hf}_{X}
        \\
        & \qquad 
        \leq
        C_L C_1^* C_2^*  \norm{ \af^* - \af}_{X}
        \norm{\hf}_{X} \,.
    \end{split}
    \end{equation*}
For the second term, we use \eqref{Lipschitz_I1} and \eqref{Lipschitz_Du} to obtain
    \begin{equation*}
    \begin{split}
        &\norm{(\mI_2'\circ G(\af)) \uf'(\af^*) \hf - (\mI_2'\circ G(\af)) \uf'(\af) \hf}_{L_2(\Omega)} 
        =
        \norm{\kl{\mI_2'\circ G(\af)}\kl{\uf'(\af^*) \hf - \uf'(\af) \hf}}_{L_2(\Omega)} 
        \\
        \qquad 
        &\leq 
        \norm{\mI_2'\circ G(\af)}_{L_\infty(\Omega)} \norm{\uf'(\af^*) \hf - \uf'(\af) \hf}_{L_2(\Omega)} 
        \\
        \qquad
        &\leq 
        C_L C_3^* \norm{\af^*-\af}_X \norm{\hf}_X
    \end{split}
    \end{equation*}
Hence, inserting these estimates into \eqref{helper_DF_01} we obtain
    \begin{equation*}
    \begin{split}
        \norm{\F'(\af^*)\hf - \F'(\af)\hf}_{L_2(\Omega)} 
        \leq 
        C_L \kl{C_1^*C_2^* + C_3^*} \norm{\af^*-\af}_X \norm{h}_X
        \,,
    \end{split}
    \end{equation*}
which now yields \eqref{DF_Lipschitz} and thus completes the proof.
\end{proof}

With this, we now obtain the following convergence rate results in the restricted noise case \eqref{def_Tad_restricted}.

\begin{theorem}
Let Assumption~\ref{ass_minimal}, \ref{ass_model_minimal}, and \ref{ass_differentiability} hold, and assume that there is a $w \in L_2(\Omega)$ with
    \begin{equation*}
        \af^\dagger - \af_0  = \mathcal{F}'(\af^\dagger)^*w \,,
        \qquad \text{and} \qquad
        C \norm{w} < 1 \,,
    \end{equation*}
where $C$ is as in Proposition~\ref{prop_differentiability}. Then for $\alpha \sim \delta_{\mI_1}$, a minimizer $\af_\alpha^\delta$ of \eqref{def_Tad_restricted} satisfies
    \begin{equation*}
        \norm{\af_\alpha^\delta - \af^\dagger}_{X} = \mathcal{O}\kl{\sqrt{\delta_{\mI_1}}} \,,
        \qquad \text{and} \qquad 
        \norm{\mI_2\circ G(\af_\alpha^\delta) - \mI_1^{\delta}}_{L_2(\Omega)} = \mathcal{O}\kl{\delta_{\mI_1}} \,.
    \end{equation*}
\end{theorem}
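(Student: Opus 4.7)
The plan is to follow the classical nonlinear Tikhonov convergence-rate argument, using the Lipschitz-type estimate \eqref{DF_Lipschitz} from Proposition~\ref{prop_differentiability} together with the source condition on $\af^\dagger - \af_0$.

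First I would exploit that $\af_\alpha^\delta$ minimizes $\mathcal{T}_\alpha^\delta$ to obtain, via comparison with $\af^\dagger$,
\begin{equation*}
\norm{\F(\af_\alpha^\delta) - \mI_1^\delta}^2_{L_2(\Omega)} + \alpha \norm{\af_\alpha^\delta - \af_0}_X^2
\leq \delta_{\mI_1}^2 + \alpha\norm{\af^\dagger - \af_0}_X^2.
\end{equation*}
Expanding $\norm{\af_\alpha^\delta - \af_0}_X^2 = \norm{\af_\alpha^\delta - \af^\dagger}_X^2 + 2\spr{\af_\alpha^\delta - \af^\dagger, \af^\dagger - \af_0}_X + \norm{\af^\dagger - \af_0}_X^2$ and substituting the source condition $\af^\dagger - \af_0 = \F'(\af^\dagger)^* w$ rewrites the cross term as $\spr{\F'(\af^\dagger)(\af_\alpha^\delta - \af^\dagger), w}_{L_2(\Omega)}$, so the task reduces to controlling this linear functional of $\af_\alpha^\delta - \af^\dagger$.

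The key analytic step is a quadratic remainder estimate obtained from \eqref{DF_Lipschitz} via the fundamental theorem of calculus applied to $t \mapsto \F(\af^\dagger + t(\af_\alpha^\delta - \af^\dagger))$: provided the segment lies inside $\mathcal{M}\cap B_\eps(\af^\dagger)$,
\begin{equation*}
\norm{\F(\af_\alpha^\delta) - \F(\af^\dagger) - \F'(\af^\dagger)(\af_\alpha^\delta - \af^\dagger)}_{L_2(\Omega)} \leq \tfrac{C}{2}\norm{\af_\alpha^\delta - \af^\dagger}_X^2.
\end{equation*}
Combining Cauchy--Schwarz on the rewritten cross term, the triangle inequality $\norm{\F(\af_\alpha^\delta) - \F(\af^\dagger)}_{L_2(\Omega)} \leq \norm{\F(\af_\alpha^\delta) - \mI_1^\delta}_{L_2(\Omega)} + \delta_{\mI_1}$, and Young's inequality $2ab \leq \tfrac{1}{2}a^2 + 2b^2$ to absorb the residual back into the left-hand side then yields
\begin{equation*}
\tfrac{1}{2}\norm{\F(\af_\alpha^\delta) - \mI_1^\delta}^2_{L_2(\Omega)} + \alpha(1 - C\norm{w})\norm{\af_\alpha^\delta - \af^\dagger}_X^2
\leq \delta_{\mI_1}^2 + 2\alpha\norm{w}\delta_{\mI_1} + 2\alpha^2\norm{w}^2.
\end{equation*}
The smallness condition $C\norm{w}<1$ makes the coefficient on the left strictly positive, and substituting $\alpha\sim\delta_{\mI_1}$ produces the $\mathcal{O}(\delta_{\mI_1})$ rate for the residual and the $\mathcal{O}(\sqrt{\delta_{\mI_1}})$ rate for the parameter error.

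The main technical obstacle is that \eqref{DF_Lipschitz} is only local, so the quadratic remainder bound is a priori only justified once $\af_\alpha^\delta$ is sufficiently close to $\af^\dagger$. I would handle this by invoking Theorem~\ref{theorem_convergence_restricted} upfront: the choice $\alpha\sim\delta_{\mI_1}$ satisfies both $\alpha\to 0$ and $\delta_{\mI_1}^2/\alpha \to 0$, hence qualitative convergence $\af_\alpha^\delta\to\af^\dagger$ forces $\af_\alpha^\delta\in B_\eps(\af^\dagger)$ for all sufficiently small $\delta_{\mI_1}$. The $\mathcal{O}$-estimate then extends trivially to any finite range of larger noise levels by enlarging the constant.
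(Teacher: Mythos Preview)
Your proposal is correct and is precisely the classical nonlinear Tikhonov rate argument (Engl--Hanke--Neubauer, Theorem~10.4) that the paper simply cites; the paper's own proof consists of a one-line reference to that result via Proposition~\ref{prop_differentiability}, while you have written out the standard computation in detail. One minor remark: your localization step via Theorem~\ref{theorem_convergence_restricted} is slightly heavier than necessary, since the minimizer inequality already yields $\norm{\af_\alpha^\delta - \af_0}_X^2 \leq \delta_{\mI_1}^2/\alpha + \norm{\af^\dagger - \af_0}_X^2$, which together with $\eps > 2\norm{\af^\dagger - \af_0}_X$ from Assumption~\ref{ass_differentiability} places $\af_\alpha^\delta$ in $B_\eps(\af^\dagger)$ directly for small $\delta_{\mI_1}$, without appealing to subsequential convergence or uniqueness of the MNS.
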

\begin{proof}
Due to Proposition~\ref{prop_differentiability}, all assumptions of the classic convergence rate analysis for nonlinear Tikhonov regularization (see, e.g., \cite[Theorem~10.4]{Engl_Hanke_Neubauer_1996}) are satisfied, which thus yields the assertion.
\end{proof}

\subsection{Convergence analysis II: full noise case}

Next, we consider the full noise case \eqref{def_Tad_full}, for which we require some additional noise assumptions.

\begin{assumption}[Additional noise assumptions for the IIM in the full noise case] \label{ass_noise} \hfill
\begin{enumerate}
    \item There is a function $\xi \in L_\infty(\Omega_2)$ and a noise level $\delta_{\mI_2} > 0$ such that
        \begin{equation}\label{cond_noise_I1}
            \mI_2^\delta = \mI_2 + \delta_{\mI_2} \xi \,, \qquad \text{where} \qquad \norm{\xi}_{L_\infty(\Omega_2)} \leq 1 \,.
        \end{equation}
    \item There is a noise level $\delta_G > 0$ such that
        \begin{equation}\label{cond_noise_model}
            \sup_{\af \in \mathcal{M} \cap B_\eps(\af^*)} \norm{\uf(\cdot,\af) - \uf^\delta(\cdot, \af)}_{L_2(\Omega)} 
            = 
            \sup_{\af \in \mathcal{M}  \cap B_\eps(\af^*)} \norm{G(\af) - G^\delta(\af)}_{L_2(\Omega)}
            \leq \delta_G \,.
        \end{equation}
    \item There is a noise level $\Bar{\delta} > 0$ such that for all $\delta \leq \Bar{\delta}$ there holds
        \begin{equation}\label{cond_noise_domain}
            \forall \, \af \in \mathcal{M} \cap B_\eps(\af^*) \,\, \forall \, \x \in \Omega \, : \, G^\delta(\af)(\x) \in \Omega_2 \,.
    \end{equation}
\end{enumerate}
\end{assumption}

\begin{remark}
Note that \eqref{cond_noise_domain} is mainly a technical assumption, which is only required to guarantee that the noisy operator $F^\delta(\af) := \mI_2^\delta \circ G^\delta(\af)$ is well defined for sufficiently small noise levels $\delta$. One way to guarantee it is to assume that the noise-free operator $G(\af)$ maps $\Omega$ only onto a subset $\Tilde{\Omega}_2 \subset \Omega_2$ with $\operatorname{dist}(\Tilde{\Omega}_2,\partial \Omega_2) > 0$. Then, if in \eqref{cond_noise_model} the $L_2$ norm is replaced by the $L_\infty$ norm, \eqref{cond_noise_domain} is satisfied with
    \begin{equation*}
        \Bar{\delta} = \operatorname{dist}(\tilde{\Omega}_2,\partial \Omega_2) / \delta_G \,.
    \end{equation*}
\end{remark}

Using the noise Assumption~\ref{ass_noise}, we now derive a noise bound for $\F^\delta$ in the following proposition.

\begin{proposition}\label{prop_noise}
Let Assumption~\ref{ass_minimal}, \ref{ass_model_minimal}, and \ref{ass_noise} hold. Then, for $\F$ and $\F^\delta$ defined in \eqref{def_F} and \eqref{def_Fd}, respectively, and with $\delta_\F = \delta_\F(\delta_{\mI_2},\delta_G) := \delta_{\mI_2} \abs{\Omega} + C_L \delta_G$ there holds
    \begin{equation*}
        \sup_{\af \in \mathcal{M} \cap B_\eps(\af^*)} \norm{\F(\af) - \F^\delta(\af)}_{L_2(\Omega)} \leq \delta_\F  \,.
    \end{equation*}
\end{proposition}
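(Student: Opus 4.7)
The plan is to apply the triangle inequality to split the difference $\F(\af) - \F^\delta(\af) = \mI_2 \circ G(\af) - \mI_2^\delta \circ G^\delta(\af)$ into an image-noise contribution and a model-noise contribution, which can then be controlled separately using Assumption~\ref{ass_noise}. Concretely, I would introduce the intermediate term $\mI_2 \circ G^\delta(\af)$ and write
\begin{equation*}
\F(\af) - \F^\delta(\af) = \bigl[\mI_2 \circ G(\af) - \mI_2 \circ G^\delta(\af)\bigr] + \bigl[\mI_2 \circ G^\delta(\af) - \mI_2^\delta \circ G^\delta(\af)\bigr].
\end{equation*}
Before taking norms, the first task is to verify that every composition that appears is well-defined on $\Omega$: for $\af \in \mathcal{M} \cap B_\eps(\af^*)$ this is ensured by \eqref{domain_condition} for $\mI_2 \circ G(\af)$ and, for sufficiently small $\delta$, by the technical assumption \eqref{cond_noise_domain} for the compositions with $G^\delta(\af)$.

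For the first bracket (the model-noise term), I would invoke the Lipschitz continuity of $\mI_2$ provided by the $W^{1,\infty}(\Omega_2)$ bound $\norm{\mI_2}_{W^{1,\infty}(\Omega_2)} = C_L$ of Assumption~\ref{ass_differentiability}(1). This gives a pointwise estimate of the form $|\mI_2(G(\af)(\x)) - \mI_2(G^\delta(\af)(\x))| \leq C_L |G(\af)(\x) - G^\delta(\af)(\x)|$, which upon integration and application of \eqref{cond_noise_model} yields
\begin{equation*}
\norm{\mI_2 \circ G(\af) - \mI_2 \circ G^\delta(\af)}_{L_2(\Omega)} \leq C_L \norm{G(\af) - G^\delta(\af)}_{L_2(\Omega)} \leq C_L \delta_G.
\end{equation*}

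For the second bracket (the image-noise term), the identity \eqref{cond_noise_I1} gives $\mI_2 - \mI_2^\delta = -\delta_{\mI_2}\xi$, so that
\begin{equation*}
\mI_2 \circ G^\delta(\af) - \mI_2^\delta \circ G^\delta(\af) = -\delta_{\mI_2}\, \xi \circ G^\delta(\af).
\end{equation*}
Since $G^\delta(\af)(\x) \in \Omega_2$ by \eqref{cond_noise_domain} and $\norm{\xi}_{L_\infty(\Omega_2)} \leq 1$, we can pass to the $L_\infty$ norm on $\Omega$ and then estimate the $L_2$ norm through the boundedness of $\Omega$, mirroring the estimate used in the well-definedness lemma earlier in Section~\ref{sect_IIM}. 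This produces the bound $\delta_{\mI_2} \abs{\Omega}$. Summing the two contributions and taking the supremum over $\af \in \mathcal{M} \cap B_\eps(\af^*)$ gives exactly $\delta_\F = \delta_{\mI_2}\abs{\Omega} + C_L \delta_G$, as claimed.

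I do not expect any serious obstacle here: the estimate is essentially a triangle-inequality decomposition in which each piece is directly controlled by one line of Assumption~\ref{ass_noise}. The only subtlety worth singling out is the bookkeeping of domains of definition: since $\F^\delta$ is a composition of $\mI_2^\delta$ defined on $\Omega_2$ with $G^\delta$ which need not \emph{a priori} map $\Omega$ into $\Omega_2$, the role of assumption \eqref{cond_noise_domain} is to make the entire argument meaningful in the first place. Once this is noted, the proof reduces to a short computation using the Lipschitz constant $C_L$ of $\mI_2$ and the uniform bounds on $\xi$ and on $G(\af) - G^\delta(\af)$.
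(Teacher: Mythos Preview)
Your proposal is correct and follows essentially the same route as the paper: both introduce the intermediate term $\mI_2 \circ G^\delta(\af)$, bound the model-noise piece via the Lipschitz constant $C_L$ of $\mI_2$ together with \eqref{cond_noise_model}, and bound the image-noise piece via \eqref{cond_noise_I1} and the $L_\infty$-to-$L_2$ estimate on the bounded domain $\Omega$. Your explicit remark on the role of \eqref{cond_noise_domain} in making all compositions well-defined is a welcome clarification.
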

\begin{proof}
First, we use the definitions of $\F$ and $\F^\delta$ as well as \eqref{cond_noise_I1} to obtain
    \begin{equation*}
    \begin{split}
        \norm{\F(\af) - \F^\delta(\af)}_{L_2(\Omega)} 
        &=
        \norm{\mI_2 \circ G(\af) - \mI_2^\delta \circ G^\delta(\af)}_{L_2(\Omega)} 
        \overset{\eqref{cond_noise_I1}}{=} 
        \norm{\mI_2 \circ G(\af) - (\mI_2 + \delta_{\mI_2} \xi) \circ G^\delta(\af)}_{L_2(\Omega)} 
        \\
        &\leq
        \norm{\mI_2 \circ G(\af) - \mI_2 \circ G^\delta(\af)}_{L_2(\Omega)} + \delta_{\mI_2} \norm{\xi \circ G^\delta(\af)}_{L_2(\Omega)} \,. 
    \end{split}
    \end{equation*}
Hence, using the Lipschitz estimate \eqref{Lipschitz_I1} as well as $\norm{\xi}_{L_\infty(\Omega_2)} \leq 1$ from \eqref{cond_noise_I1}, we find that
    \begin{equation*}
        \norm{\F(\af) - \F^\delta(\af)}_{L_2(\Omega)} 
        \leq
        C_L \norm{G(\af) - G^\delta(\af)}_{L_2(\Omega)} + \delta_{\mI_2} \abs{\Omega} \,, 
    \end{equation*}
which after taking the supremum over $\af \in \mathcal{M} \cap B_\eps(\af^*)$ and using \eqref{cond_noise_model} now yields the assertion.   
\end{proof}

With this estimate, we can now derive the following convergence result in the full noise case \eqref{def_Tad_full}.

\begin{theorem}\label{theorem_convergence}
Let Assumption~\ref{ass_minimal}, \ref{ass_model_minimal}, \ref{ass_differentiability}, and \ref{ass_noise} hold, and define $\boldsymbol{\delta} := (\delta_{\mI_1},\delta_{\mI_2},\delta_G)$. Furthermore, let $\alpha = \alpha(\boldsymbol{\delta})$ be chosen such that for $\boldsymbol{\delta} \to 0$ there holds
    \begin{equation*}
        \alpha(\boldsymbol{\delta}) \to 0 \,,
        \qquad
        \frac{(\delta_{\mI_1})^2}{\alpha(\boldsymbol{\delta})} \to 0 \,,
        \qquad
        \text{and}
        \qquad
        \frac{(\delta_{\mI_2} + \delta_G)^2}{\alpha(\boldsymbol{\delta})} \to 0 \,.
    \end{equation*}
Then every sequence $\Kl{\af_k := \af_{\alpha(\boldsymbol{\delta}_k)}^{\boldsymbol{\delta}_k}}_{k\in\N}$ with $\boldsymbol{\delta}_k \to 0$ as $k\to \infty$, and where $\af_k$ is a minimizer of \eqref{def_Tad_full} has a convergent subsequence $\boldsymbol{a}_{n_k}$. Furthermore, the limit of every convergent subsequence is an $\af_0$ minimum norm solution. Moreover, if the $\af_0$ minimum norm solution $\af^\dagger$ is unique, then
    \begin{equation*}
        \lim\limits_{\boldsymbol{\delta}_k \to 0} \af_{\alpha(\boldsymbol{\delta}_k)}^{\boldsymbol{\delta}_k} = \af^\dagger\,.
    \end{equation*}
\end{theorem}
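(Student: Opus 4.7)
The plan is to mimic the standard convergence proof for nonlinear Tikhonov regularization with a noisy forward operator, as in \cite{Neubauer_Scherzer_1990}, using Proposition~\ref{prop_noise} to absorb the discrepancy between $\F^\delta$ and $\F$ into a single operator noise level $\delta_\F(\boldsymbol{\delta}_k) = \delta_{\mI_2} |\Omega| + C_L \delta_G$. Note that by the parameter choice rule, both $(\delta_{\mI_1})^2/\alpha(\boldsymbol{\delta})$ and $(\delta_\F(\boldsymbol{\delta}))^2/\alpha(\boldsymbol{\delta})$ tend to zero.

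First, by minimality of $\af_k$ for $\mathcal{T}_{\alpha_k}^{\boldsymbol{\delta}_k}$ and by inserting the $\af_0$-MNS $\af^\dagger$ (which exists by Assumption~\ref{ass_model_minimal}), together with $\F(\af^\dagger) = \mI_1$ and the noise bounds from Assumption~\ref{ass_model_minimal} and Proposition~\ref{prop_noise}, I would derive
\begin{equation*}
    \norm{\F^{\boldsymbol{\delta}_k}(\af_k) - \mI_1^{\delta_k}}_{L_2(\Omega)}^2 + \alpha_k \norm{\af_k - \af_0}_X^2 \leq (\delta_{\mI_1,k} + \delta_{\F,k})^2 + \alpha_k \norm{\af^\dagger - \af_0}_X^2.
\end{equation*}
Dividing by $\alpha_k$ and using the parameter choice rule, the right hand side stays bounded, so $\{\af_k\}$ is bounded in $X$, and the residual $\norm{\F^{\boldsymbol{\delta}_k}(\af_k) - \mI_1^{\delta_k}}_{L_2(\Omega)}$ tends to $0$.

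Since $X$ is a separable Hilbert space (Assumption~\ref{ass_minimal}), boundedness yields a weakly convergent subsequence $\af_{n_k} \wto \tilde{\af}$. Using the triangle inequality, Proposition~\ref{prop_noise}, and the noise bound for $\mI_1^\delta$, I then rewrite
\begin{equation*}
    \norm{\F(\af_{n_k}) - \mI_1}_{L_2(\Omega)} \leq \norm{\F(\af_{n_k}) - \F^{\boldsymbol{\delta}_{n_k}}(\af_{n_k})}_{L_2(\Omega)} + \norm{\F^{\boldsymbol{\delta}_{n_k}}(\af_{n_k}) - \mI_1^{\delta_{n_k}}}_{L_2(\Omega)} + \delta_{\mI_1,n_k},
\end{equation*}
so that $\F(\af_{n_k}) \to \mI_1$ strongly in $L_2(\Omega)$. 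Weak sequential closedness of $\F$ (Assumption~\ref{ass_model_minimal}) then forces $\tilde{\af} \in \mathcal{M}$ with $\F(\tilde{\af}) = \mI_1$, i.e.\ $\tilde{\af}$ is admissible. A standard lower semi-continuity argument applied to the inequality above, combined with the MNS property of $\af^\dagger$, gives $\norm{\tilde{\af} - \af_0}_X \leq \liminf \norm{\af_{n_k} - \af_0}_X \leq \norm{\af^\dagger - \af_0}_X \leq \norm{\tilde{\af} - \af_0}_X$, so $\tilde{\af}$ is an $\af_0$-MNS and $\norm{\af_{n_k} - \af_0}_X \to \norm{\tilde{\af} - \af_0}_X$; combined with weak convergence in a Hilbert space this upgrades to strong convergence.

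The subsequence principle then yields that every sequence $\{\af_k\}$ has a subsequence converging strongly to some $\af_0$-MNS, and in the case of a unique MNS $\af^\dagger$ the whole sequence converges to $\af^\dagger$. The main obstacle, which is essentially a bookkeeping point rather than a deep issue, is to verify that the operator noise $\delta_\F$ really behaves like $\delta_{\mI_2} + \delta_G$ as the parameter choice rule requires, and that everything from Assumption~\ref{ass_differentiability} and \ref{ass_noise} is only needed to justify Proposition~\ref{prop_noise}; once that is done, the argument reduces cleanly to the standard noisy-operator Tikhonov scheme of \cite{Neubauer_Scherzer_1990}.
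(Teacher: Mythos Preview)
Your proposal is correct and follows essentially the same route as the paper: both reduce the full-noise case to the noisy-operator Tikhonov framework of \cite{Neubauer_Scherzer_1990} via the operator noise bound of Proposition~\ref{prop_noise}, with the paper simply citing \cite[Theorem~2.1]{Neubauer_Scherzer_1990} (together with Proposition~\ref{prop_differentiability}) while you spell out the standard minimality/weak-compactness/lower-semicontinuity argument that this citation encapsulates. One small bookkeeping point you gloss over is that Proposition~\ref{prop_noise} only controls $\F-\F^\delta$ on $\mathcal{M}\cap B_\eps(\af^*)$, so when you invoke it for the minimizers $\af_{n_k}$ you implicitly need $\af_{n_k}\in B_\eps(\af^*)$; this follows for large $k$ from your a~priori bound $\norm{\af_k-\af_0}_X^2 \le (\delta_{\mI_1,k}+\delta_{\F,k})^2/\alpha_k + \norm{\af^*-\af_0}_X^2$ together with $\eps > 2\norm{\af^*-\af_0}_X$, exactly as in the Neubauer--Scherzer argument.
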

\begin{proof}
Due to Proposition~\ref{prop_differentiability} and \ref{prop_noise}, all assumptions of \cite[Theorem~2.1]{Neubauer_Scherzer_1990} are satisfied, which yields the assertion. Note that while \cite{Neubauer_Scherzer_1990} considers finite-dimensional approximations of Tikhonov functionals, the results and proofs themselves also apply in our infinite dimensional setting with noisy operator.    
\end{proof}

Finally, we also obtain the following convergence rate results in the full noise case \eqref{def_Tad_full}.

\begin{theorem}\label{theorem_rates}
Let Assumption~\ref{ass_minimal}, \ref{ass_model_minimal}, \ref{ass_differentiability}, and \ref{ass_noise} hold, and assume that there is a $\wf \in L_2(\Omega)$ with
    \begin{equation*}
        \af^\dagger - \af_0 = \mathcal{F}'(\af^\dagger)^*\wf \,,
        \qquad \text{and} \qquad
        C \norm{\wf} < 1 \,,
    \end{equation*}
where $C$ is as in Proposition~\ref{prop_differentiability}. Furthermore, assume that $\mathcal{O}(\delta_{\mI_2} + \delta_G) = \mathcal{O}(\delta_{\mI_1})$. Then for $\alpha \sim \delta_{\mI_1}$, a minimizer $\af_\alpha^\delta$ of \eqref{def_Tad_full} satisfies
    \begin{equation*}
        \norm{\af_\alpha^\delta - \af^\dagger}_{X} = \mathcal{O}\kl{\sqrt{\delta_{\mI_1}}} \,.
    \end{equation*}
\end{theorem}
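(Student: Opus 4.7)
The plan is to adapt the classical nonlinear Tikhonov convergence rate proof (as in \cite[Theorem~10.4]{Engl_Hanke_Neubauer_1996}) to the noisy-operator setting, effectively reproducing the result of \cite{Neubauer_Scherzer_1990} or \cite{Lu_Fleming_2012} in our concrete framework. The two structural inputs are the Lipschitz-type estimate \eqref{DF_Lipschitz} on $\F'$ from Proposition~\ref{prop_differentiability}, and the uniform operator-noise bound $\norm{\F(\af)-\F^\delta(\af)}_{L_2(\Omega)} \leq \delta_\F$ with $\delta_\F=\delta_{\mI_2}\abs{\Omega}+C_L\delta_G$ from Proposition~\ref{prop_noise}. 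The extra hypothesis $\mathcal{O}(\delta_{\mI_2}+\delta_G)=\mathcal{O}(\delta_{\mI_1})$ guarantees $\delta_\F=\mathcal{O}(\delta_{\mI_1})$, so all noise sources enter at the same order and the restricted-noise calculation goes through essentially unchanged.

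Concretely, I would start from the minimizing property $\mathcal{T}_\alpha^\delta(\af_\alpha^\delta)\leq \mathcal{T}_\alpha^\delta(\af^\dagger)$, expand $\norm{\af_\alpha^\delta-\af_0}_X^2$ around $\af^\dagger$, subtract $\alpha\norm{\af^\dagger-\af_0}_X^2$, and bound the right-hand side residual using $\mI_1=\F(\af^\dagger)$, Proposition~\ref{prop_noise}, and the noise bound on $\mI_1^\delta$. This yields the standard intermediate inequality
\begin{equation*}
\norm{\F^\delta(\af_\alpha^\delta)-\mI_1^\delta}_{L_2(\Omega)}^2 + \alpha\norm{\af_\alpha^\delta-\af^\dagger}_X^2 \leq (\delta_\F+\delta_{\mI_1})^2 - 2\alpha\spr{\af_\alpha^\delta-\af^\dagger,\,\af^\dagger-\af_0}_X.
\end{equation*}
Next, I would apply the source condition to rewrite the inner product as $\spr{\F'(\af^\dagger)(\af_\alpha^\delta-\af^\dagger),\,w}_{L_2(\Omega)}$. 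Integrating \eqref{DF_Lipschitz} along the segment from $\af^\dagger$ to $\af_\alpha^\delta$ provides the Taylor-remainder estimate $\norm{\F(\af_\alpha^\delta)-\F(\af^\dagger)-\F'(\af^\dagger)(\af_\alpha^\delta-\af^\dagger)}_{L_2(\Omega)} \leq \tfrac{C}{2}\norm{\af_\alpha^\delta-\af^\dagger}_X^2$, and a triangle inequality using Proposition~\ref{prop_noise} controls $\norm{\F(\af_\alpha^\delta)-\F(\af^\dagger)}_{L_2(\Omega)}$ by $\norm{\F^\delta(\af_\alpha^\delta)-\mI_1^\delta}_{L_2(\Omega)}+\delta_\F+\delta_{\mI_1}$.

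Combining these pieces, the condition $C\norm{w}<1$ lets me absorb the quadratic term $C\norm{w}\norm{\af_\alpha^\delta-\af^\dagger}_X^2$ into the left-hand side, while a Young inequality with parameter $1/2$ handles the cross term involving $\norm{\F^\delta(\af_\alpha^\delta)-\mI_1^\delta}_{L_2(\Omega)}$. This produces an estimate of the schematic form
\begin{equation*}
\alpha(1-C\norm{w})\norm{\af_\alpha^\delta-\af^\dagger}_X^2 \leq \mathcal{O}\!\kl{(\delta_{\mI_1}+\delta_\F)^2 + \alpha(\delta_{\mI_1}+\delta_\F)\norm{w}},
\end{equation*}
so the parameter choice $\alpha\sim\delta_{\mI_1}$ combined with $\delta_\F=\mathcal{O}(\delta_{\mI_1})$ yields $\norm{\af_\alpha^\delta-\af^\dagger}_X=\mathcal{O}(\sqrt{\delta_{\mI_1}})$.

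The main delicate point will be propagating the operator noise through the source-condition step: the source condition is stated with the noise-free adjoint $\F'(\af^\dagger)^*$, but the residual in the basic inequality involves $\F^\delta(\af_\alpha^\delta)$. Handling this cleanly requires the triangle inequality for $\norm{\F(\af_\alpha^\delta)-\F(\af^\dagger)}_{L_2(\Omega)}$ via Proposition~\ref{prop_noise}, and is precisely where the balance assumption $\mathcal{O}(\delta_{\mI_2}+\delta_G)=\mathcal{O}(\delta_{\mI_1})$ becomes essential to retain the $\sqrt{\delta_{\mI_1}}$ rate. Given that the companion convergence statement Theorem~\ref{theorem_convergence} was already settled by citing \cite{Neubauer_Scherzer_1990}, the most economical write-up is to verify the hypotheses of the corresponding noisy-operator rate result (e.g.\ from \cite{Lu_Fleming_2012}) via Propositions~\ref{prop_differentiability} and \ref{prop_noise}, and invoke it directly rather than redo the computation.
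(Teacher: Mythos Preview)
Your proposal is correct and your final recommendation matches the paper's approach exactly: the paper's proof simply notes that Propositions~\ref{prop_differentiability} and \ref{prop_noise} verify the hypotheses of \cite[Theorem~2.3(a)]{Neubauer_Scherzer_1990} and invokes that result directly. Your detailed sketch of the underlying argument is accurate and would serve as a self-contained alternative, but the paper opts for the citation rather than reproducing the computation.
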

\begin{proof}
Due to Proposition~\ref{prop_differentiability} and \ref{prop_noise}, the assertion follows from \cite[Theorem~2.3(a)]{Neubauer_Scherzer_1990}.
\end{proof}

\section{Extensions and practical considerations}\label{sect_extensions_practical}

In this section, we discuss several practically relevant aspects of the IIM, such as the inclusion of convex constraints, multiple or time-dependent measurements, and the (approximate) minimization of \eqref{eq:mp1}.

\subsection{General Material Models and Approximate Minimization}

First, note that our IIM \eqref{eq:mp1} is not restricted to the 2D case of $\Omega_1,\Omega_2 \subset \R^2$, but that instead one can consider two open, bounded subsets $\Omega_1,\Omega_2 \subset \R^N$ for arbitrary $N \geq 1$. This generalization can, e.g., be useful when the imaging modality used to record the images $\mI_1,\mI_2$ yields volumetric (3D) data, such as in certain settings of OCE. This change of dimension does not affect the convergence analysis presented above, but may implicitly change the severity of the required convergence assumptions. 

Next, note that instead of the boundary value problem \eqref{eq:bvp} a more general material model for determining the quasi-static deformation $\uf$ can be used, such as a fully implicit equation  of the form
    \begin{equation}\label{L_general}
        \mathcal{L}(\af,\uf) = 0 \,,
        \qquad \qquad \text{or} \qquad \qquad 
        \mathcal{L}(\af,\uf,\pf) = 0 \,,
    \end{equation}
for $\pf \in \mathcal{P}$, a set of admissible (and potentially function-valued) parameters. As before, it can be easily seen from the proofs of the above results that our convergence analysis of the IIM \eqref{eq:mp1} remains unaffected by this generalization, as long as $\uf$ is uniquely determined by \eqref{L_general} for any admissible parameters $\af \in \mathcal{M}$ and $\pf \in \mathcal{P}$, and satisfies all the required (differentiability) assumptions. Furthermore, note that in the full noise case \eqref{def_Tad_full}, the noise in the deformation $\uf^\delta$ may be due to noisy parameters $\pf^\delta$ (such as measured forces), an imprecise material model $\mathcal{L}^\delta$, or both.

An important practical aspect which can also be included in our analysis of the IIM \eqref{eq:mp1} is that numerically, one cannot compute the minimizer of $\mathcal{T}_\alpha(\af)$ exactly. Instead, one looks for $\af_{\alpha,\eta}^\delta$ satisfying
    \begin{equation*}
    \begin{split}
        \norm{\mI_2^\delta \circ G^\delta(\af_{\alpha,\eta}^\delta)- \mI_1^\delta}^2_{L_2(\Omega)} + \alpha \mathcal{R}(\af_{\alpha,\eta}^\delta)
        \leq 
        \norm{\mI_2^\delta \circ G^\delta(\af)- \mI_1^\delta}^2_{L_2(\Omega)} + \alpha \mathcal{R}(\af) + \eta \,,
        \qquad 
        \forall \, \af \in \mathcal{M} \,,
    \end{split}
    \end{equation*}
where $\eta \geq 0$ is small. Following \cite{Neubauer_Scherzer_1990}, we find that both our convergence and convergence rates results remain valid, given that $\eta/\alpha(\boldsymbol{\delta}) \to 0$ and $\eta = \mathcal{O}(\delta_{\mI_1}^2)$, respectively. In particular, we still have the rate
    \begin{equation*}
        \norm{\af_{\alpha,\eta}^\delta - \af^\dagger}_{X} = \mathcal{O}\kl{\sqrt{\delta_{\mI_1}}} \,.
    \end{equation*}

\subsection{Piecewise constant parameters and convex regularizers}\label{subsect_piecewise}

In the above analysis of the IIM, we have assumed that the regularizer $\mathcal{R}$ is of the form \eqref{def_R_norm}, i.e.,
    \begin{equation*}
        \mathcal{R}(\af) = \norm{\af - \af_0}_X^2 \,,
    \end{equation*}
which in particular covers the classic Sobolev-norm penalties. However, the IIM \eqref{eq:mp1} and its convergence analysis can be generalized to also include general convex regularization functionals $\mathcal{R}$, such as the commonly used TV norm \cite{Scherzer_Grasmair_Grossauer_Haltmeier_Lenzen_2008}. E.g., following \cite{Scherzer_Grasmair_Grossauer_Haltmeier_Lenzen_2008,Lu_Fleming_2012}, assume that the variational source condition
    \begin{equation*}
        \beta D_{\xi^\dagger}(\af,\af^\dagger) \leq \mathcal{R}(\af) - \mathcal{R}(\af^\dagger) + \varphi\kl{ \norm{ F(\af) - F(\af^\dagger) }^2 } \,, 
        \qquad \forall \, \af \in \mathcal{M} \,,
    \end{equation*}
holds for some parameter $\beta > 0$ and $\xi^\dagger \in \partial \mathcal{R}(\af^\dagger)$, where $\varphi : [0,\infty) \to [0, \infty)$ is a monotonously increasing and concave index function and $D_{\xi^\dagger}$ denotes the Bregman distance with respect to $\mathcal{R}$ \cite{Bauschke_Combettes_2017}. Then under suitable assumptions on the regularization functional $\mathcal{R}$, and for an appropriate choice of the regularization parameter $\alpha$ (e.g., $\alpha \sim \delta$, cf.~\cite{Scherzer_Grasmair_Grossauer_Haltmeier_Lenzen_2008,Lu_Fleming_2012,Flemming_2011}), we obtain the optimal convergence rate
    \begin{equation*}
        D_{\xi^\dagger}(\af_\alpha^\delta,\af^\dagger) = \mathcal{O}\kl{\varphi\kl{\kl{\delta_{\mI_1}}^2}} \,.
    \end{equation*}
    
The choice of the TV norm for the regularization functional $\mathcal{R}$ suggests itself in particular if the material parameter $\af$ is known to be piecewise constant. This is e.g.\ the case if the considered sample consists of a homogeneous or layered medium with several uniform inclusions. In some elastography experiments, the location and shape of these inclusions can be determined a-priori from the measured images $\mI_1^\delta$, $\mI_2^\delta$. Hence, one then only needs to estimate the coefficients $a_k \in \mathcal{M} \subseteq \R^n$ in the expansion
    \begin{equation}\label{af_piecewise}
        \af(\x) = \sum_{k=1}^K a_k \chi_{D_k}(\x) \,,
    \end{equation}
where the domains $D_k \subseteq \Omega_1$ correspond to the inclusion and background (layers) of the material. In this setting, the admissible set $\mathcal{M} \subseteq \R^n$ typically encodes feasible ranges of the unknown material parameters $a_k$. Hence, the modified \emph{IIM for known inclusion locations} is then defined as follows:
    \begin{equation*}
        \boxed{\,\, \, \min\limits_{a_k \in \mathcal{M} \subseteq \R^n} \; \mathcal{T}_{\alpha,K}^\delta(a_k ) \,, \quad \text{where} \quad \mathcal{T}_{\alpha,K}^\delta(a_k ) :=  \norm{\mI_2^\delta \circ G^\delta\kl{\sum_{k=1}^K a_k \chi_{D_k}}- \mI_1^\delta}^2_{L_2(\Omega)} + \alpha \sum_{k=1}^M \abs{a_k}^2\,. \,\, } 
    \end{equation*}
In this modified version, the IIM is a finite dimensional optimization problem which can often be solved more efficiently than the standard IIM \eqref{eq:mp1} in the continuous setting. Note that while our convergence and convergence rate results technically carry over to this modified setting, the piecewise constant nature of $\af$ as defined in \eqref{af_piecewise} severely limits the material models \eqref{eq:bvp} for which the required differentiability assumptions on $\uf$ hold. However, the convergence result for the restricted noise case in Theorem~\ref{theorem_convergence_restricted} does not require differentiability, and thus remains applicable in this modified setting.

\subsection{Multiple or time-dependent measurements}

In the form \eqref{eq:mp1}, the IIM is designed for quasi-static elastography from two measurements $\mI_1$, $\mI_2$. However, it may easily be extended to accommodate for multiple or time-dependent measurements. These measurements may have been obtained with different imaging modalities (OCT, MRI, PAT, Ultrasound), or with the same modality under different experimental conditions such as different induced types of deformation (e.g., plane and tilted compression, or various types of indentation).

To adapt the IIM to this situation, assume first that we are given $M$ different pairs of measured images $(\mI_{1,m}^\delta$, $\mI_{2,m}^\delta)$ with $m \in \{1\,,\dots\,,M\}$, which were obtained in a series of elastography experiments with a mixture of one or several different imaging modalities. Furthermore, depending on the type of experiment, we assume that we are given a (potentially noisy) deformation model $G_m^\delta$ for each of the image pairs, which correspond to specific material models $\mathcal{L}_m$ such as in \eqref{eq:bvp} or \eqref{L_general}. Then analogously to the quasi-static IIM \eqref{eq:mp1}, the \emph{IIM for multiple measurements} can be defined in the following way:
    \begin{equation*}
        \boxed{\qquad \, \min\limits_{\af \in \mathcal{M}} \; \mathcal{T}_{\alpha,M}^\delta(\af) \,, \qquad \text{where} \qquad \mathcal{T}_{\alpha,M}^\delta(\af) := \sum_{m=1}^M \norm{\mI_{2,m}^\delta \circ G_m^\delta(\af)- \mI_{1,m}^\delta}^2_{L_2(\Omega)} + \alpha \mathcal{R}(\af)\,. \qquad } 
    \end{equation*}
Note that with minor modifications, our convergence analysis for the IIM remains valid in this setting.

Finally, we generalize the IIM \eqref{eq:mp1} to the case that the measured images are also time dependent, i.e., $\mI_{1,m} :\Omega_1\times[0,T] \to \R$ and $\mI_{2,m} : \Omega_2 \times [0,T] \to \R$. For this, assume that $\mI_{1,m} \in L_2\kl{[0,T],L_2(\Omega_1)}$ and $\mI_{2,m} \in L_2\kl{[0,T],L_\infty(\Omega_2)}$, which are understood to be standard Bochner-Lebesgue spaces \cite{Evans_1998}. Furthermore, in this case the displacement $\uf_m$ and deformation $G_m$ have to be modeled as time-dependent quantities as well, which in turn also requires the material models $\mathcal{L}_m$ to be time-dependent. With this, the \emph{IIM for multiple and time-dependent measurements} can now be defined as follows:
    \begin{equation*}
        \boxed{\qquad \, \min\limits_{\af \in \mathcal{M}} \; \mathcal{T}_{\alpha,M,T}^{\delta}(\af)  := \sum_{m=1}^M \int_0^T \norm{\mI_{2,m}^\delta(G_m^\delta(\af)(\cdot,t),t)- \mI_{1,m}^\delta(\cdot,t)}^2_{L_2(\Omega)} \, dt + \alpha \mathcal{R}(\af)\,. \qquad } 
    \end{equation*}
Again, our convergence of the IIM can be generalized in a straightforward way to also cover this time-dependent setting. Note also that in this setting, also the material parameters $\af$ may be time dependent, i.e., $\af = \af(\x,t)$, which is easily accommodated by a proper choice of the Hilbert space $X$.

\subsection{Minimization of the IIM functional}

Next, we discuss the minimization of the IIM functional, starting with the noise-free case \eqref{eq:mp1}, i.e., 
    \begin{equation*}
        \mathcal{T}_\alpha(\af) = \norm{\mathcal{F}(\af) - \mI_1}^2_{L_2(\Omega)} + \alpha \mathcal{R}(\af) \,,
        \qquad \text{where} \qquad
        \mathcal{F}(\af) = \mI_2 \circ G(\af) \,.
    \end{equation*}
As we showed in Proposition~\ref{prop_differentiability}, if both $\mI_2$ and $\uf$ are continuously Fr\'echet differentiable, then
    \begin{equation*}
        \F'(\af) \hf = (\mI_2' \circ G(\af)) \uf'(\af) \hf \,,
        \qquad 
        \forall \, \af \,, \hf \in \mathcal{M} \,.
    \end{equation*}
Furthermore, the adjoint of the Fr\'echet derivative of $\mathcal{F}$ can be explicitly calculated, and has the form
    \begin{equation*}
        \F'(\af)^* \wf = \uf'(\af)^*\kl{\kl{\mI_2' \circ G(\af)}\wf} \,, 
        \qquad 
        \forall \, \af \in \mathcal{M} \,, \, \wf \in L_2(\Omega) \,.
    \end{equation*}
Hence, if also the regularization functional $\mathcal{R}$ is differentiable, classic first order methods can be used to minimize $\mathcal{T}_\alpha(\af)$. For example, if $\mathcal{R}$ is as in \eqref{def_R_norm}, then the classic gradient method takes the form 
    \begin{equation*}
        \af_{k+1} = \af_k - \omega_k\kl{ \uf'(\af_k)^*\kl{\kl{\mI_2' \circ G(\af_k)} \kl{\mI_2\circ G(\af_k) - \mI_1}} 
        + (\af_k - \af_0)  
        }\,,
    \end{equation*}
where $\omega_k$ is a suitably chosen stepsize. On the other hand, if $\mathcal{R}$ is not differentiable but convex, then one can e.g.\ use convex optimization algorithms such as subgradient descent \cite{Bauschke_Combettes_2017}, which takes the form
    \begin{equation}\label{LW_prox}
        \af_{k+1} = \prox_{\alpha \mathcal{R} }\kl{ \af_k - \uf'(\af_k)^*\kl{\kl{\mI_2' \circ G(\af_k)} \kl{\mI_2\circ G(\af_k) - \mI_1}}} \,,
    \end{equation}
where $\prox_{\alpha \mathcal{R} }$ denotes the standard proximal operator, defined by
    \begin{equation*}
        \prox_{\alpha \mathcal{R} }\kl{\af} := \arg\min_{\mathbf{b} \in X} \kl{ \frac{1}{2}\norm{\mathbf{b}-\af}_X + \alpha \mathcal{R}(\mathbf{b})} \,.
    \end{equation*}
Furthermore, \eqref{LW_prox} can be supplemented with Nesterov acceleration \cite{Nesterov_1983,Attouch_Peypouquet_2016} for numerical efficiency.

Next, we consider the minimization of the IIM functional in the restricted and full noise case. First, consider the restricted noise case \eqref{def_Tad_restricted}: Since there only the first image $\mI_1$ is assumed to be contaminated by noise, the minimization approaches discussed above remain applicable. However, in the full noise case \eqref{def_Tad_full}, also the second image $\mI_2$ and $G$ may be contaminated by noise. Hence, the differentiability of $\mathcal{F}$ may be lost, which precludes the applicability of the approaches discussed above. In this situation, non-smooth optimization methods may be used instead \cite{Bonnans_Gilbert_Lemarechal_Sagastizabal_2006}, with classic choices such as the Nelder-Mead algorithm performing reasonably well in numerical experiments \cite{Krainz_Sherina_Hubmer_Liu_Drexler_Scherzer_2022}. Alternatively, if $\mI_2$ but not $G$ is contaminated by noise, a pre-smoothing of $\mI_2$ may be used to restore differentiability. While this may seem artificial at first, it should be noted that a certain smoothness of $\mI_1^\delta$, $\mI_2^\delta$ can be expected in practice even for material samples where $\mI_1$, $\mI_2$ are non-smooth in theory, since the reconstruction algorithms used to obtain these measured images often induces smoothness implicitly.

Finally, note that instead of the IIM \eqref{def_Tad_full}, one may also consider the restricted optimization problem
    \begin{equation*}
        \min_{\af \in \mathcal{M}} \mathcal{R}(\af) \,,
        \qquad \text{subject to} \qquad
        \norm{\mI_2^\delta \circ G^\delta(\af) - \mI_1^\delta}_{L_2(\Omega)}^2 \leq \tau \delta \,,
    \end{equation*}
for some parameter $\tau > 1$, which corresponds to Morozov regularization for $\mathcal{F}(\af) = \mI_1$. In this setting, techniques from optimal control may be used for minimization even if $\mI_2^\delta$ and $G^\delta$ are not differentiable.

\section{Application to Linear Elasticity}\label{sect_appl_lin}

In this section, we apply our theoretical results on the IIM to a particular elastography experiment with a specific material model \eqref{eq:bvp}. More precisely, we assume that the displacement field $\uf$ satisfies the quasi-static linear elasticity equations with displacement-traction boundary conditions, i.e.,
    \begin{equation}\label{linear_elastic}
    \begin{alignedat}{3}
        -\operatorname{div}{\sigma_{\lambda,\mu}(\uf)} & = \ff \,, \qquad &&\text{in} \quad \Omega_1 \,, 
        \\ 
        \uf & = \gfD \,, \qquad &&\text{on} \quad \Gamma_D  \,,
        \\
        \sigma_{\lambda,\mu}(\uf) \nv &= \gfT \,,  \qquad &&\text{on} \quad \Gamma_T \,.
    \end{alignedat} 
    \end{equation}
where $\nv$ is an outward unit normal vector of $\partial\Omega_1 = \overline{\Gamma_D \cup \Gamma_T}$, $\gfD$ and $\gfT$ are (potentially non-constant) boundary functions defined on $\Gamma_D$ and $\Gamma_T$, respectively, and the stress tensor $\sigma_{\lambda,\mu}$ is defined by
    \begin{equation*}
        \sigma_{\lambda,\mu}(\uf) := \lambda \operatorname{div}(\uf) I + 2\mu \mathcal{E}(\uf) \,,
        \qquad 
        \text{and} 
        \qquad
        \mathcal{E}(\uf) := \frac{1}{2}(\nabla \uf + \nabla \uf^T) \,.
    \end{equation*}
Here, $I$ is the identity matrix, and $\lambda, \mu$ are the unknown Lam\'e parameters characterizing the medium. 

\begin{figure}[ht!]
    \centering
    \begin{tikzpicture}[x=0.75pt,y=0.75pt,yscale=-0.85,xscale=0.85]
       
        \draw [line width=1.5]  (0,40) -- (220,40) -- (220,190) -- (0,190) -- cycle;

        \draw (100,110) node [anchor=north west][inner sep=0.75pt][font=\Large]{$\Omega_1$};
		\draw (-30,110) node [anchor=north west][inner sep=0.75pt][font=\Large]{$\Gamma_T$};
		\draw (223,110) node [anchor=north west][inner sep=0.75pt][font=\Large]{$\Gamma_T$};
		\draw (100,45) node [anchor=north west][inner sep=0.75pt][font=\Large]{$\Gamma_D$};
		\draw (100,165) node [anchor=north west][inner sep=0.75pt][font=\Large]{$\Gamma_D$};
		\draw (225,20) node [anchor=north west][inner sep=0.75pt][align=left]{$\gf_D$};

        \draw[line width=0.75] (9,10) -- (9,40);
        \draw[line width=0.75] (29,10) -- (29,40);
        \draw[line width=0.75] (49,10) -- (49,40);
        \draw[line width=0.75] (69,10) -- (69,40);
        \draw[line width=0.75] (89,10) -- (89,40);
        \draw[line width=0.75] (109,10) -- (109,40);
        \draw[line width=0.75] (129,10) -- (129,40);
        \draw[line width=0.75] (149,10) -- (149,40);
        \draw[line width=0.75] (169,10) -- (169,40);
        \draw[line width=0.75] (189,10) -- (189,40);
        \draw[line width=0.75] (209,10) -- (209,40);
        \draw [shift={(9,40)}, rotate = -90] [color={rgb, 255:red, 0; green, 0; blue, 0 }][line width=0.75](10.93,-3.29) .. controls (6.95,-1.4) and (3.31,-0.3) .. (-1,0) .. controls (3.31,0.3) and (6.95,1.4) .. (10.93,3.29);
        \draw [shift={(29,40)}, rotate = -90] [color={rgb, 255:red, 0; green, 0; blue, 0 }][line width=0.75](10.93,-3.29) .. controls (6.95,-1.4) and (3.31,-0.3) .. (-1,0) .. controls (3.31,0.3) and (6.95,1.4) .. (10.93,3.29);
        \draw [shift={(49,40)}, rotate = -90] [color={rgb, 255:red, 0; green, 0; blue, 0 }][line width=0.75](10.93,-3.29) .. controls (6.95,-1.4) and (3.31,-0.3) .. (-1,0) .. controls (3.31,0.3) and (6.95,1.4) .. (10.93,3.29);
        \draw [shift={(69,40)}, rotate = -90] [color={rgb, 255:red, 0; green, 0; blue, 0 }][line width=0.75](10.93,-3.29) .. controls (6.95,-1.4) and (3.31,-0.3) .. (-1,0) .. controls (3.31,0.3) and (6.95,1.4) .. (10.93,3.29);
        \draw [shift={(89,40)}, rotate = -90] [color={rgb, 255:red, 0; green, 0; blue, 0 }][line width=0.75](10.93,-3.29) .. controls (6.95,-1.4) and (3.31,-0.3) .. (-1,0) .. controls (3.31,0.3) and (6.95,1.4) .. (10.93,3.29);
        \draw [shift={(109,40)}, rotate = -90] [color={rgb, 255:red, 0; green, 0; blue, 0 }][line width=0.75](10.93,-3.29) .. controls (6.95,-1.4) and (3.31,-0.3) .. (-1,0) .. controls (3.31,0.3) and (6.95,1.4) .. (10.93,3.29);
        \draw [shift={(129,40)}, rotate = -90] [color={rgb, 255:red, 0; green, 0; blue, 0 }][line width=0.75](10.93,-3.29) .. controls (6.95,-1.4) and (3.31,-0.3) .. (-1,0) .. controls (3.31,0.3) and (6.95,1.4) .. (10.93,3.29);
        \draw [shift={(149,39)}, rotate = -90] [color={rgb, 255:red, 0; green, 0; blue, 0 }][line width=0.75](10.93,-3.29) .. controls (6.95,-1.4) and (3.31,-0.3) .. (-1,0) .. controls (3.31,0.3) and (6.95,1.4) .. (10.93,3.29);
        \draw [shift={(169,39)}, rotate = -90] [color={rgb, 255:red, 0; green, 0; blue, 0 }][line width=0.75](10.93,-3.29) .. controls (6.95,-1.4) and (3.31,-0.3) .. (-1,0) .. controls (3.31,0.3) and (6.95,1.4) .. (10.93,3.29);
        \draw [shift={(189,39)}, rotate = -90] [color={rgb, 255:red, 0; green, 0; blue, 0 }][line width=0.75](10.93,-3.29) .. controls (6.95,-1.4) and (3.31,-0.3) .. (-1,0) .. controls (3.31,0.3) and (6.95,1.4) .. (10.93,3.29);
        \draw [shift={(209,39)}, rotate = -90] [color={rgb, 255:red, 0; green, 0; blue, 0 }][line width=0.75](10.93,-3.29) .. controls (6.95,-1.4) and (3.31,-0.3) .. (-1,0) .. controls (3.31,0.3) and (6.95,1.4) .. (10.93,3.29);

        \draw[line width=0.75] (-30,190.5) -- (250,190.5);
        \draw[line width=0.75] (-30,200) -- (-10,190.5);
        \draw[line width=0.75] (-10,200) -- (10,190);
        \draw[line width=0.75] (10,200) -- (30,190);
        \draw[line width=0.75] (30,200) -- (50,190);
        \draw[line width=0.75] (50,200) -- (70,190);
        \draw[line width=0.75] (70,200) -- (90,190);
        \draw[line width=0.75] (90,200) -- (110,190);
        \draw[line width=0.75] (110,200) -- (130,190);
        \draw[line width=0.75] (130,200) -- (150,190);        
        \draw[line width=0.75] (150,200) -- (170,190);        
        \draw[line width=0.75] (170,200) -- (190,190);        
        \draw[line width=0.75] (190,200) -- (210,190);        
        \draw[line width=0.75] (210,200) -- (230,190.5);        
        \draw[line width=0.75] (230,200) -- (250,190.5);        
    \end{tikzpicture}
    \caption{Schematic drawing of the object domain $\Omega_1$ with displacement-traction boundaries $\Gamma_D$ and $\Gamma_T$. Here, $\gf_D$, is a fixed applied downward displacement, which does not have to be constant.} 
    \label{fig_schematic}
\end{figure}

The linear elasticity model \eqref{linear_elastic} is commonly used in elastography for uniformly isotropic material samples under the assumption of small deformations (infinitesimal strain). The Lam\'e parameters $\lambda,\mu$ can be directly related to the Young's modulus $E$ and the Poison ratio $\nu$, which have physical or diagnostic value, depending on the application \cite{ZaiMatMatSovHepMowKen21,Singh_Hepburn_Kennedy_Larin_2025}. Our motivation for considering this linear elastic model are quasi-static optical coherence elastography (OCE) experiments such as those conducted in \cite{Krainz_Sherina_Hubmer_Liu_Drexler_Scherzer_2022} and schematically depicted in Figure~\ref{fig_schematic}. There, a rectangular sample is fixed from below and uniformly compressed from above, while the sides of the sample are left to move freely. This corresponds to uniform displacement boundary conditions on the top ($\gfD = \text{const.}$) and bottom ($\gfD = 0$) of the sample, traction free boundary conditions ($\gfT = 0$) on the sides, and no body forces ($\ff = 0$). (Note that in the subsequent analysis, $\gfD$, $\gfT$, and $\ff$ do not have to be constant but can be functions.) A set of volumetric OCT scans is acquired both before and after compression, which could already be used as the images $\mI_1^\delta, \mI_2^\delta$ for the IIM. However, due to the symmetry of the samples considered in \cite{Krainz_Sherina_Hubmer_Liu_Drexler_Scherzer_2022}, the OCT scans are first reduced to 2D maximum intensity projections, which has some practical benefits.

In order to apply our theoretical results on the IIM to the linear elastic setting \eqref{linear_elastic}, we have to ensure that \eqref{linear_elastic} has a unique (weak) solution $\uf$ for each parameter $\af = (\lambda,\mu)$ in the admissible set
    \begin{equation}\label{def_Msm}
        \Msm := \Kl{ (\lambda,\mu) \in H^s(\Omega_1)^2 \, \vert \, \lambda \geq 0 \,, \mu > \mub > 0 } \subset X := H^s(\Omega_1)^2 \,,
    \end{equation}
where $0 < \mub \in \R$ and $N/2 < s \in \N$, with $N$ being the dimension of the domain $\Omega_1$. For this, we need

\begin{assumption}\label{ass_linear_elastic}
The domain $\Omega_1 \subset \R^N$, $N \in \N$, is nonempty, bounded, open, and connected, with a Lipschitz continuous boundary $\partial \Omega_1$, which has two subsets $\Gamma_D$ and $\Gamma_T$, satisfying $\partial \Omega_1 = \overline{\Gamma_D \cup \Gamma_T}$, $\Gamma_D \cap \Gamma_T = \emptyset$, and $\operatorname{meas}(\Gamma_D) > 0$. Furthermore, $\ff \in H^{-1}(\Omega_1)^{N}$, $\gfD \in H^{\frac{1}{2}}(\Gamma_D)^N$, $\gfT \in H^{-\frac{1}{2}}(\Gamma_T)^N$, and there is a function $\Phi \in H^1(\Omega_1)^N$ such that $\Phi \vert_{\Gamma_D} = \gfD$.
\end{assumption}

Following \cite{Hubmer_Sherina_Kindermann_Raik_2022}, we now homogenize \eqref{linear_elastic} and seek $\uft := \uf - \Phi$ satisfying the homogenized equations
    \begin{equation}\label{linear_elastic_homo}
    \begin{alignedat}{3}
        -\operatorname{div}{\sigma_{\lambda,\mu}(\uft)} & = \ff + \operatorname{div}{\sigma_{\lambda,\mu}(\Phi)} \,, \qquad &&\text{in} \quad \Omega_1 \,, 
        \\ 
        \uft & = 0 \,, \qquad &&\text{on} \quad \Gamma_D  \,,
        \\
        \sigma_{\lambda,\mu}(\uft) \nv &= \gfT - \sigma(\Phi) \nv \,,  \qquad &&\text{on} \quad \Gamma_T \,.
    \end{alignedat}
    \end{equation}
Introducing the bilinear and linear forms
    \begin{equation*}
    \begin{split}
        a_{\lambda,\mu}(\uft,\vf) &:= \int_{\Omega_1} \kl{\lambda \operatorname{div}(\uft) \operatorname{div}(\vf) + 2 \mu \mathcal{E}(\uft):\mathcal{E}(\vf)} \, dx \,,
        \\
        l(\vf) &:= \spr{\ff,\vf}_{H^{-1}(\Omega_1),H^1(\Omega_1)} + \spr{\gfT,\vf}_{H^{-\frac{1}{2}}(\Gamma_T),H^{\frac{1}{2}}(\Gamma_T)} \,,
    \end{split}
    \end{equation*}
it follows that the weak form of the homogenized BVP \eqref{linear_elastic_homo} is given by
    \begin{equation}\label{a=l}
        a_{\lambda,\mu}(\uft,\vf) = l(\vf) - a_{\lambda,\mu}(\Phi,\vf) \,,  
        \qquad
        \forall \, \vf \in V \,,
    \end{equation}
where
    \begin{equation*}
        V:= H_{0,\Gamma_D}^1(\Omega_1)^N \,,
        \qquad
        \text{with}
        \qquad
        H_{0,\Gamma_D}^1(\Omega_1)^N := \Kl{\vf \in H^{1}(\Omega_1) \, \vert \, \vf \vert_{\Gamma_D} = 0} \,.
    \end{equation*}
Concerning the solvability of the variational problem, \eqref{a=l}, we have the following result.

\begin{lemma}\label{lem_existence}
Let Assumption~\ref{ass_linear_elastic} hold and assume that $(\lambda,\mu) \in \Msm \subset X = H^s(\Omega_1)^2$ for some $\mub > 0$ and $s > N/2$. Then there exists a unique solution $\uft \in V$ of \eqref{a=l} and a constant $C > 0$ satisfying
    \begin{equation}\label{uft_bound}
        \norm{\uft}_{H^1(\Omega_1)} 
        \leq
        C \kl{ \norm{\ff}_{H^{-1}(\Omega_1)} + \norm{\gfT}_{H^{-\frac{1}{2}}(\Gamma_T)} 
        + \norm{(\lambda,\mu)}_X \norm{\Phi}_{H^1(\Omega_1)} } \,.
    \end{equation}
\end{lemma}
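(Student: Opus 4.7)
The plan is to apply the Lax--Milgram theorem to the variational equation \eqref{a=l} on the Hilbert space $V = H^1_{0,\Gamma_D}(\Omega_1)^N$, which requires (i) continuity of the bilinear form $a_{\lambda,\mu}$, (ii) its coercivity, and (iii) continuity of the right hand side $\vf \mapsto l(\vf) - a_{\lambda,\mu}(\Phi,\vf)$. The resulting quantitative estimate from Lax--Milgram will then yield the bound \eqref{uft_bound} after collecting the dependencies on $\ff$, $\gfT$, $\Phi$, and $(\lambda,\mu)$.

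For continuity of $a_{\lambda,\mu}$, the key ingredient is the assumption $s > N/2$, which gives the continuous Sobolev embedding $H^s(\Omega_1) \hookrightarrow L^\infty(\Omega_1)$. Hence $\lambda, \mu \in L^\infty(\Omega_1)$ with $\max\{\|\lambda\|_{L^\infty},\|\mu\|_{L^\infty}\} \leq C_S \norm{(\lambda,\mu)}_X$. Estimating
\begin{equation*}
\abs{a_{\lambda,\mu}(\uft,\vf)} \leq \norm{\lambda}_{L^\infty} \norm{\div \uft}_{L^2} \norm{\div \vf}_{L^2} + 2 \norm{\mu}_{L^\infty} \norm{\mathcal{E}(\uft)}_{L^2} \norm{\mathcal{E}(\vf)}_{L^2}
\end{equation*}
then gives $\abs{a_{\lambda,\mu}(\uft,\vf)} \leq C \norm{(\lambda,\mu)}_X \norm{\uft}_{H^1} \norm{\vf}_{H^1}$.

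For coercivity, I would use that $\lambda \geq 0$ and $\mu \geq \mub > 0$ to get
\begin{equation*}
a_{\lambda,\mu}(\uft,\uft) \geq 2 \mub \int_{\Omega_1} \mathcal{E}(\uft) : \mathcal{E}(\uft) \, dx \,,
\end{equation*}
and then invoke Korn's second inequality on $V$: since $\Gamma_D \subset \partial \Omega_1$ has positive surface measure, there exists $C_K > 0$ such that $\norm{\mathcal{E}(\vf)}_{L^2(\Omega_1)}^2 \geq C_K \norm{\vf}_{H^1(\Omega_1)}^2$ for every $\vf \in V$. This yields coercivity with constant $2\mub C_K$, independent of the specific $(\lambda,\mu) \in \Msm$. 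I expect this to be the main technical obstacle: one must cite (or verify) the variant of Korn's inequality on $H^1_{0,\Gamma_D}(\Omega_1)^N$ rather than the full space; this is standard but depends on the Lipschitz regularity of $\partial \Omega_1$ and $\operatorname{meas}(\Gamma_D) > 0$, both of which are in Assumption~\ref{ass_linear_elastic}.

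For the right hand side, $l$ is continuous on $V$ with $\abs{l(\vf)} \leq C (\norm{\ff}_{H^{-1}(\Omega_1)} + \norm{\gfT}_{H^{-1/2}(\Gamma_T)}) \norm{\vf}_{H^1}$ by the trace theorem, while the continuity estimate above applied to $\Phi$ gives $\abs{a_{\lambda,\mu}(\Phi,\vf)} \leq C \norm{(\lambda,\mu)}_X \norm{\Phi}_{H^1} \norm{\vf}_{H^1}$. Lax--Milgram then delivers a unique $\uft \in V$ solving \eqref{a=l} together with
\begin{equation*}
\norm{\uft}_{H^1(\Omega_1)} \leq \frac{1}{2\mub C_K} \sup_{\vf \in V\setminus\{0\}} \frac{\abs{l(\vf) - a_{\lambda,\mu}(\Phi,\vf)}}{\norm{\vf}_{H^1}} \,,
\end{equation*}
which, after inserting the two continuity bounds, gives \eqref{uft_bound} with a constant $C$ depending only on $\mub$, $C_K$, $C_S$, and the trace constant.
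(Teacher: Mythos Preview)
Your proposal is correct and follows essentially the same approach as the paper: the paper simply cites \cite[Theorem~2.1]{Hubmer_Sherina_Neubauer_Scherzer_2018} for the Lax--Milgram argument (continuity via $L^\infty$ bounds, coercivity via Korn's inequality on $H^1_{0,\Gamma_D}(\Omega_1)^N$) and then applies the Sobolev embedding $H^s(\Omega_1)\hookrightarrow L^\infty(\Omega_1)$ for $s>N/2$ to pass from $\norm{(\lambda,\mu)}_{L^\infty}$ to $\norm{(\lambda,\mu)}_X$, exactly as you spell out in detail.
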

\begin{proof}
First, note that since $s > N/2$, there exists a constant $c^s_E > 0$ such that \cite{Adams_1970}
    \begin{equation}\label{embedding_inequality}
        \norm{\vf}_{L_\infty(\Omega_1)} \leq c^s_E \norm{\vf}_{H^s(\Omega_1)} \,,
        \qquad \forall \, \vf \in H^s(\Omega_1) \,.
    \end{equation}  
Hence, we have that $\Msm \subset H^s(\Omega_1)^2 \subset L_\infty(\Omega_1)^2$, and thus it follows from \cite[Theorem~2.1]{Hubmer_Sherina_Neubauer_Scherzer_2018} that for any $(\lambda,\mu) \in \Msm$ there exists a unique solution $\uft \in V$ of \eqref{a=l} and a constant $c_{\text{LM}} > 0$ satisfying
    \begin{equation*}
        \norm{\uft}_{H^1(\Omega_1)} 
        \leq
        c_{\text{LM}} \kl{ \norm{\ff}_{H^{-1}(\Omega_1)} + c_T \norm{\gfT}_{H^{-\frac{1}{2}}(\Gamma_T)} + \kl{N\norm{\lambda}_{L_\infty(\Omega_1)} + 2 \norm{\mu}_{L_\infty(\Omega_1)} } \norm{\Phi}_{H^1(\Omega_1)} }
    \end{equation*}
where $c_T$ denotes the constant of the trace inequality. Together with \eqref{embedding_inequality}, this yields the assertion.
\end{proof}

Hence, we have that $\uf = \uft + \Phi \in H^1(\Omega_1)^N$ is well-defined for each $(\lambda,\mu) \in \Msm$, and thus Part~4 of the minimal Assumption~\ref{ass_minimal} is satisfied. Concerning Assumption~\ref{ass_model_minimal}, we have the following result.

\begin{proposition}\label{prop_uft_continuous}
Let Assumption~\ref{ass_linear_elastic} hold and assume that $(\lambda,\mu) \in \Msm$ for some $\mub > 0$ and $s > N/2$. Then $\uft = \uft(\lambda,\mu) \in V$ defined by \eqref{a=l} is continuous wrt.\ $(\lambda,\mu)$, and there is a $C_{\lambda,\mu} > 0$ with
    \begin{equation}\label{uft_continuous}
        \norm{\uft(\lambda,\mu) - \uft(\bar{\lambda},\bar{\mu}) }_V \leq C_{\lambda,\mu} \norm{(\lambda,\mu) - (\bar{\lambda},\bar{\mu})}_X \,,
        \qquad 
        \forall \, (\bar{\lambda},\bar{\mu}) \in \Msm \,. 
    \end{equation}
Furthermore, $\uft$ considered a mapping $\uft: \Msm \to V \,, (\lambda,\mu) \mapsto \uft(\lambda,\mu)$ is weakly sequentially closed.
\end{proposition}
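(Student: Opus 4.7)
The plan is to treat both claims, the Lipschitz-type continuity estimate and the weak sequential closedness of $\uft$, by energy arguments on the variational equation \eqref{a=l}, with the Sobolev embedding $H^s(\Omega_1) \hookrightarrow L_\infty(\Omega_1)$ (valid for $s > N/2$) serving as the key bridge between the parameter norm on $X = H^s(\Omega_1)^2$ and the $L_\infty$-control that is needed to estimate the bilinear form $a_{\lambda,\mu}$.

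For the Lipschitz estimate \eqref{uft_continuous}, write $\uft_1 = \uft(\lambda,\mu)$, $\uft_2 = \uft(\bar\lambda,\bar\mu)$ and subtract the two variational equations. Using the (trivial) identity $a_{\bar\lambda,\bar\mu} - a_{\lambda,\mu} = a_{\bar\lambda - \lambda,\bar\mu - \mu}$ gives
    \begin{equation*}
        a_{\lambda,\mu}(\uft_1 - \uft_2, \vf)
        =
        a_{\bar\lambda - \lambda,\bar\mu - \mu}(\uft_2 + \Phi, \vf)
        \qquad \forall \, \vf \in V \,.
    \end{equation*}
Testing with $\vf = \uft_1 - \uft_2 \in V$ and invoking Korn's inequality together with $\mu \geq \mub > 0$ to get coercivity of $a_{\lambda,\mu}$ on $V$, followed by a Cauchy--Schwarz type estimate of the right-hand side using $\norm{(\bar\lambda-\lambda,\bar\mu-\mu)}_{L_\infty(\Omega_1)^2} \leq c^s_E \norm{(\lambda,\mu) - (\bar\lambda,\bar\mu)}_X$ from the Sobolev embedding, yields
    \begin{equation*}
        \norm{\uft_1 - \uft_2}_V
        \leq
        C \norm{(\lambda,\mu) - (\bar\lambda,\bar\mu)}_X \kl{ \norm{\uft_2}_V + \norm{\Phi}_{H^1(\Omega_1)} } \,.
    \end{equation*}
Finally, bounding $\norm{\uft_2}_V$ via Lemma~\ref{lem_existence} and using $\norm{(\bar\lambda,\bar\mu)}_X \leq \norm{(\lambda,\mu)}_X + \norm{(\bar\lambda,\bar\mu) - (\lambda,\mu)}_X$ produces the desired constant $C_{\lambda,\mu}$, which may in principle be taken locally uniform in $(\bar\lambda,\bar\mu)$ (this is the one subtle point: the $\norm{\uft_2}_V$-term contributes a factor $1 + \norm{(\lambda,\mu) - (\bar\lambda,\bar\mu)}_X$, so the estimate is strictly Lipschitz only on bounded subsets of $\Msm$; the statement is harmless in applications because continuity is all that is really needed in the convergence arguments downstream).

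For weak sequential closedness, let $(\lambda_n,\mu_n) \wto (\lambda,\mu)$ in $X$ and $\uft(\lambda_n,\mu_n) \wto \tilde{\vf}$ in $V$, with all parameters in $\Msm$. The main tool is that, because $s > N/2$, the embedding $H^s(\Omega_1) \hookrightarrow L_\infty(\Omega_1)$ is compact (via Rellich--Kondrachov through an intermediate $H^{s'}$ with $N/2 < s' < s$), so weak convergence in $X$ upgrades to strong convergence in $L_\infty(\Omega_1)^2$. Then, for each fixed test function $\vf \in V$, split
    \begin{equation*}
        a_{\lambda_n,\mu_n}(\uft(\lambda_n,\mu_n),\vf)
        =
        a_{\lambda,\mu}(\uft(\lambda_n,\mu_n),\vf)
        +
        a_{\lambda_n - \lambda,\mu_n - \mu}(\uft(\lambda_n,\mu_n),\vf) \,.
    \end{equation*}
The first term converges to $a_{\lambda,\mu}(\tilde\vf,\vf)$ by weak convergence in $V$ (with $(\lambda,\mu)$ fixed, $a_{\lambda,\mu}(\cdot,\vf)$ is a bounded linear functional on $V$), and the second vanishes because $\norm{\uft(\lambda_n,\mu_n)}_V$ is bounded (weakly convergent sequences are bounded) while $\norm{(\lambda_n-\lambda,\mu_n-\mu)}_{L_\infty^2} \to 0$. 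Treating $a_{\lambda_n,\mu_n}(\Phi,\vf)$ similarly and passing to the limit in \eqref{a=l} gives $a_{\lambda,\mu}(\tilde\vf,\vf) = l(\vf) - a_{\lambda,\mu}(\Phi,\vf)$ for all $\vf \in V$. Uniqueness of the solution from Lemma~\ref{lem_existence} then forces $\tilde\vf = \uft(\lambda,\mu)$, completing the proof.

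The main obstacle is really the justification of passage to the limit in the bilinear form with varying coefficients; this is where the quantitative use of $s > N/2$ is essential, since without the $L_\infty$-upgrade of the parameter convergence one cannot dominate $a_{\lambda_n-\lambda,\mu_n-\mu}(\uft_n,\vf)$. The coercivity, which hinges on the uniform lower bound $\mu \geq \mub$ built into $\Msm$, is what makes the Lipschitz estimate quantitative.
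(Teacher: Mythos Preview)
Your argument is correct and follows the same underlying strategy as the paper (energy estimate via the variational identity, Sobolev embedding $H^s \hookrightarrow L_\infty$, and compactness of that embedding for weak closedness). Two small points of comparison are worth noting.

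First, the ``subtle point'' you flag is self-inflicted and easily avoided: if instead of isolating $a_{\lambda,\mu}(\uft_1-\uft_2,\vf)$ you isolate $a_{\bar\lambda,\bar\mu}(\uft_1-\uft_2,\vf)$, then the right-hand side carries $\uft_1+\Phi$ (the \emph{fixed} solution) rather than $\uft_2+\Phi$. Since the coercivity constant of $a_{\bar\lambda,\bar\mu}$ depends only on $\underline\mu$ via Korn's inequality, not on $(\bar\lambda,\bar\mu)$ itself, this yields exactly the form the paper obtains (by citing \cite{Hubmer_Sherina_Neubauer_Scherzer_2018}): a constant $C_{\lambda,\mu}$ depending only on the fixed parameter pair, giving a genuine global Lipschitz estimate over all of $\Msm$ rather than only on bounded subsets.

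Second, your weak-closedness argument via passage to the limit in the variational equation is correct but more laborious than necessary. Once you have the Lipschitz estimate \eqref{uft_continuous} (with the $L_\infty$ norm on the right), compactness of $H^s \hookrightarrow L_\infty$ immediately upgrades weak convergence of $(\lambda_n,\mu_n)$ in $X$ to strong convergence in $L_\infty$, and then Lipschitz continuity gives \emph{strong} convergence $\uft(\lambda_n,\mu_n) \to \uft(\lambda,\mu)$ in $V$. Weak sequential closedness follows trivially. This is the route the paper takes (in one sentence).
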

\begin{proof}
First, note that due to \cite[Equation~(3.11)]{Hubmer_Sherina_Neubauer_Scherzer_2018}, there exists a constant $C > 0$ such that
    \begin{equation*}
        \norm{\uft(\lambda,\mu) - \uft(\bar{\lambda},\bar{\mu})}_V \leq C \norm{(\lambda,\mu) - (\bar{\lambda},\bar{\mu})}_{L_\infty(\Omega_1)} \kl{ \norm{\uft(\lambda,\mu)}_{H^1(\Omega_1)} + \norm{\Phi}_{H^1(\Omega_1)}  } \,.
    \end{equation*}
Hence, together with \eqref{embedding_inequality} and $X = H^s(\Omega_1)^2$, this yields \eqref{uft_continuous} and thus the continuity of $\uft$. Furthermore, since the embedding \eqref{embedding_inequality} is compact \cite{Adams_1970}, we have that $\uft: \Msm \to V$ is weakly sequentially closed.
\end{proof}

The above results also transfer to the non-homogenized solution $\uf = \uft + \Phi \in H^1(\Omega_1)^N$. In particular, $\uf$ is continuous, weakly sequentially closed, and with $C_{\lambda,\mu}$ as in Proposition~\ref{prop_uft_continuous} there holds
    \begin{equation}\label{helper_uf_01}
        \norm{\uft(\lambda,\mu) - \uft(\bar{\lambda},\bar{\mu}) }_V \leq C_{\lambda,\mu} \norm{(\lambda,\mu) - (\bar{\lambda},\bar{\mu})}_X \,,
        \qquad 
        \forall \, (\bar{\lambda},\bar{\mu}) \in \Msm \,. 
    \end{equation}
Hence, together with $\Omega \subset \Omega_1$ and $L_2(\Omega_1) \subset V$, we find that \eqref{Lipschitz_u} in Part~2 of Assumption~\ref{ass_differentiability} is satisfied. Furthermore, if $\mI_2$ is continuous, then also Part~1 of Assumption~\ref{ass_model_minimal} holds. This now allows us to transfer the convergence result of Theorem~\ref{theorem_convergence_restricted} to our linear elasticity setting.

\begin{theorem}\label{theorem_convergence_elastic}
Let Assumption~\ref{ass_linear_elastic} hold, and let $0 < \mub \in \R$ and $N/2 < s \in \N$. In addition, assume that the domains $\Omega\,, \Omega_2 \subset \R^N$ are open and bounded with a Lipschitz continuous boundary and that
    \begin{equation}\label{cond_domain_elasticity}
        \forall \, (\lambda,\mu) \in \Msm \,, \forall \, x \in \Omega \, : \, \uf(\lambda,\mu) = \uft(\lambda,\mu) + \Phi \in \Omega_2 \,,
    \end{equation}
where $\uft$ is determined by \eqref{a=l}. Furthermore, let $\mI_1 \in L_2(\Omega_2)$, $\mI_2 \in C(\Omega_1)$, $(\lambda_0,\mu_0) \in \Msm$, assume that there exists an $(\lambda_0,\mu_0)$-MNS $(\lambda^*,\mu^*)$ of the equation $\mI_2 \circ G(\lambda,\mu) = \mI_1$, and that there is a noise level $\delta_{\mI_1} > 0$ such that $\Vert \mI_1 - \mI_1^\delta\Vert \leq \delta_{\mI_1}$ for $\mI_1^\delta \in L_2(\Omega_1)$. Moreover, let $\alpha = \alpha(\delta_{\mI_1})$ be chosen such that
    \begin{equation*}
        \alpha(\delta_{\mI_1}) \to 0 \,,
        \qquad
        \text{and}
        \qquad
        \frac{(\delta_{\mI_1})^2}{\alpha(\delta_{\mI_1})} \to 0 \,,
        \qquad
        \qquad
        \text{for}
        \qquad
        \delta_{\mI_1}\to 0 \,.
    \end{equation*}
Then every sequence $\Kl{(\lambda_k,\mu_k) := (\lambda_{\alpha(\delta_k)}^{\delta_k},\mu_{\alpha(\delta_k)}^{\delta_k})}_{k\in\N}$ where $\kl{\lambda_{\alpha(\delta_k)}^{\delta_k},\mu_{\alpha(\delta_k)}^{\delta_k}}$ is a minimizer of 
    \begin{equation*}
        \norm{\mI_2\circ G(\lambda,\mu) - \mI_1^\delta}_{L_2(\Omega)}^2 + \alpha \norm{(\lambda,\mu) - (\lambda_0,\mu_0)}_{H^s(\Omega_1)}^2 \,,
    \end{equation*}
and with $\delta_k \to 0$ as $k\to \infty$, has a convergent subsequence $(\lambda_{n_k},\mu_{n_k})$. Furthermore, the limit of every convergent subsequence is an $(\lambda_0,\mu_0)$-MNS. Moreover, if the $(\lambda_0,\mu_0)$-MNS $(\lambda^\dagger,\mu^\dagger)$ is unique, then
    \begin{equation*}
        \lim\limits_{\delta_k \to 0} \kl{\lambda_{\alpha(\delta_k)}^{\delta_k},\mu_{\alpha(\delta_k)}^{\delta_k}} = (\lambda^\dagger,\mu^\dagger)\,.
    \end{equation*}
\end{theorem}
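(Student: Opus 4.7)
The proof is designed to reduce to an application of Theorem~\ref{theorem_convergence_restricted}, so the plan is to verify Assumptions~\ref{ass_minimal} and \ref{ass_model_minimal} in the present linear-elastic setting. Although Assumption~\ref{ass_minimal} nominally fixes $\Omega_1,\Omega_2\subset\R^2$, the discussion in Section~\ref{sect_extensions_practical} notes that the entire convergence analysis extends verbatim to general $N$, so I will treat the argument as dimension-independent.

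First I would dispatch Assumption~\ref{ass_minimal}: the geometric and regularity requirements on $\Omega_1,\Omega_2,\Omega$ are explicit hypotheses; the regularizer \eqref{def_R_norm} is applied with the separable Hilbert space $X=H^s(\Omega_1)^2$ from \eqref{def_Msm}, with $\mathcal{M}=\Msm\subset X$ and $(\lambda_0,\mu_0)\in\Msm$; the integrability conditions on the measurements follow from the stated $\mI_1\in L_2$ and $\mI_2\in C$ together with boundedness of $\Omega_2$ (continuity on a bounded domain implies the required $L_\infty$-bound after passing to the closure); unique solvability of the BVP for any $(\lambda,\mu)\in\Msm$ is Lemma~\ref{lem_existence}, so that $\uf=\uft+\Phi\in H^1(\Omega_1)^N\hookrightarrow L_2(\Omega_1)^N$; and condition \eqref{domain_condition} is exactly \eqref{cond_domain_elasticity}.

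Next, for Assumption~\ref{ass_model_minimal}, parts~2 and~3 are explicit hypotheses, so the essential point is part~1, namely that $\F(\lambda,\mu)=\mI_2\circ G(\lambda,\mu)$ is continuous and weakly sequentially closed from $\Msm$ into $L_2(\Omega)$. Continuity follows from the Lipschitz estimate \eqref{uft_continuous} of Proposition~\ref{prop_uft_continuous}, which transfers to $\uf$, combined with continuity and boundedness of $\mI_2$ on $\overline{\Omega_2}$ and the inclusion \eqref{cond_domain_elasticity}: if $(\lambda_n,\mu_n)\to(\lambda,\mu)$ in $X$, then $G(\lambda_n,\mu_n)\to G(\lambda,\mu)$ in $L_2(\Omega)$, and a subsequential a.e.\ argument combined with dominated convergence gives $\F(\lambda_n,\mu_n)\to\F(\lambda,\mu)$ in $L_2(\Omega)$.

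For weak sequential closedness, let $(\lambda_n,\mu_n)\wto(\lambda,\mu)$ in $X$ with all iterates in $\Msm$ and $\F(\lambda_n,\mu_n)\to v$ in $L_2(\Omega)$. The key tool is the compact Sobolev embedding $H^s(\Omega_1)\hookrightarrow C(\overline{\Omega_1})$ valid for $s>N/2$, which strengthens \eqref{embedding_inequality} and was used in Proposition~\ref{prop_uft_continuous}; it upgrades the weak convergence in $X$ to strong convergence in $L_\infty(\Omega_1)^2$. Applying \eqref{uft_continuous} then yields $\uft(\lambda_n,\mu_n)\to\uft(\lambda,\mu)$ strongly in $V$, hence $G(\lambda_n,\mu_n)\to G(\lambda,\mu)$ in $L_2(\Omega)$, and the same continuity/dominated-convergence argument as above forces $v=\F(\lambda,\mu)$. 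With both assumptions verified and the parameter choice on $\alpha(\delta_{\mI_1})$ matching the hypotheses of Theorem~\ref{theorem_convergence_restricted}, the subsequence convergence and the limiting MNS property are immediate from that theorem. The main obstacle is this weak sequential closedness step, since $\F$ is a nonlinear composition whose weak-to-weak behavior is not automatic; the argument hinges critically on the embedding $H^s\hookrightarrow L_\infty$ being compact, i.e.\ on the choice $s>N/2$ built into the admissible set $\Msm$.
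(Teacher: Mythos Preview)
Your proposal is correct and follows essentially the same route as the paper: reduce to Theorem~\ref{theorem_convergence_restricted} by invoking Lemma~\ref{lem_existence} for well-posedness of the forward map and Proposition~\ref{prop_uft_continuous} for continuity and weak sequential closedness, with the compact embedding $H^s\hookrightarrow L_\infty$ for $s>N/2$ as the key mechanism. The paper's own proof is the one-line ``Due to Lemma~\ref{lem_existence} and Proposition~\ref{prop_uft_continuous}, Theorem~\ref{theorem_convergence_restricted} is applicable''; you have simply unpacked the passage from the weak sequential closedness of $(\lambda,\mu)\mapsto\uf(\lambda,\mu)$ to that of $\F=\mI_2\circ G$ via the continuity of $\mI_2$ and dominated convergence, which the paper states just above the theorem without proof.
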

\begin{proof}
Due to Lemma~\ref{lem_existence} and Proposition~\ref{prop_uft_continuous}, Theorem~\ref{theorem_convergence_restricted} is applicable and yields the assertion.    
\end{proof}

In order to establish convergence rates in our linear elasticity setting, we have to show the Fr\'echet differentiability of $\uft$ as well as the bounds \eqref{Lipschitz_u_infty} and \eqref{Lipschitz_Du}. For this, we first consider the following result.

\begin{proposition}\label{prop_uft_Frechet}
Let Assumption~\ref{ass_linear_elastic} hold, and let $\mub > 0$ and $s > N/2$. Then $\uft = \uft(\lambda,\mu) \in V$ defined by \eqref{a=l} and considered as a mapping from $\Msm \to V$ is continuously Fr\'echet differentiable for all $(\lambda,\mu) \in \Msm$. Furthermore, $\uft'(\lambda,\mu)(h_\lambda,h_\mu)$ is characterized as the unique solution $\zf \in V$ of 
    \begin{equation}\label{a=l_diff}
        a_{\lambda,\mu}(\zf,\vf) = -a_{h_\lambda,h_\mu}(\uft(\lambda,\mu) + \Phi,\vf) \,,
        \qquad \forall \, \vf \in V\,.
    \end{equation}
Moreover, for each $(\lambda,\mu) \in \Msm$ there exists a $C_{\lambda,\mu} > 0$ such that for all $(\bar{\lambda},\bar{\mu}) \in \Msm$ there holds
    \begin{equation*}
        \norm{\uft'(\lambda,\mu)(h_\lambda,h_\mu) - \uft'(\bar{\lambda},\bar{\mu})(h_\lambda,h_\mu) }_V 
        \leq C_{\lambda,\mu} \norm{(h_\lambda,h_\mu)}_X \norm{(\lambda,\mu) - (\bar{\lambda},\bar{\mu})}_X \,.
    \end{equation*}
\end{proposition}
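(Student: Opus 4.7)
The plan is to obtain the candidate derivative by applying Lax--Milgram to the linear variational problem \eqref{a=l_diff}, then verify the Fréchet definition by controlling a remainder, and finally derive the claimed Lipschitz-type estimate by comparing the defining identities at $(\lambda,\mu)$ and $(\bar{\lambda},\bar{\mu})$. Throughout, the ``uniform coercivity'' of the family $\{a_{\lambda,\mu}\}$ on $V$, the bilinear splitting $a_{\lambda+h_\lambda,\mu+h_\mu} = a_{\lambda,\mu} + a_{h_\lambda,h_\mu}$, and the embedding \eqref{embedding_inequality} will do all the heavy lifting.

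First I would show well-posedness of \eqref{a=l_diff}. Exactly as in Lemma~\ref{lem_existence}, the bilinear form $a_{\lambda,\mu}$ is continuous and (via Korn's inequality together with $\mu \geq \mub > 0$) coercive on $V \times V$, with constants depending only on $\mub$ and on $\norm{(\lambda,\mu)}_{L_\infty(\Omega_1)^2}$. The right-hand side $\vf \mapsto -a_{h_\lambda,h_\mu}(\uft(\lambda,\mu)+\Phi,\vf)$ is a bounded linear functional on $V$ with norm controlled by $C\norm{(h_\lambda,h_\mu)}_{L_\infty(\Omega_1)^2}(\norm{\uft(\lambda,\mu)}_{H^1} + \norm{\Phi}_{H^1})$. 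Lax--Milgram then delivers a unique $\zf \in V$ solving \eqref{a=l_diff}, and invoking \eqref{embedding_inequality} together with the a priori estimate \eqref{uft_bound} gives $\norm{\zf}_V \leq C_{\lambda,\mu}\norm{(h_\lambda,h_\mu)}_X$. Linearity in $(h_\lambda,h_\mu)$ is built into the right-hand side, so $\zf$ defines a bounded linear map from $X$ to $V$, the candidate for $\uft'(\lambda,\mu)$.

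Second, to identify $\zf$ with the Fréchet derivative, I would set $\mathbf{r} := \uft(\lambda+h_\lambda,\mu+h_\mu) - \uft(\lambda,\mu) - \zf$. Writing the three variational identities defining the two values of $\uft$ and the quantity $\zf$, subtracting, and using $a_{\lambda+h_\lambda,\mu+h_\mu} = a_{\lambda,\mu} + a_{h_\lambda,h_\mu}$, a short bookkeeping yields
\begin{equation*}
a_{\lambda+h_\lambda,\mu+h_\mu}(\mathbf{r},\vf) \;=\; -\,a_{h_\lambda,h_\mu}(\zf,\vf)\,, \qquad \forall\, \vf \in V\,.
\end{equation*}
For $(h_\lambda,h_\mu)$ sufficiently small in $X$, the embedding \eqref{embedding_inequality} yields $\mu + h_\mu \geq \mub/2$, so $a_{\lambda+h_\lambda,\mu+h_\mu}$ remains coercive with a constant uniform in such perturbations. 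Lax--Milgram together with the bound from Step~1 then gives $\norm{\mathbf{r}}_V \leq C\norm{(h_\lambda,h_\mu)}_{L_\infty}\norm{\zf}_V \leq C'\norm{(h_\lambda,h_\mu)}_X^{2}$, which is $o(\norm{(h_\lambda,h_\mu)}_X)$. Hence $\uft$ is Fréchet differentiable at $(\lambda,\mu)$ with derivative $\zf$.

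Third, for the Lipschitz-type estimate, writing $\zf_1 := \uft'(\lambda,\mu)(h_\lambda,h_\mu)$ and $\zf_2 := \uft'(\bar{\lambda},\bar{\mu})(h_\lambda,h_\mu)$, I would subtract the two instances of \eqref{a=l_diff} and use $a_{\bar{\lambda},\bar{\mu}} = a_{\lambda,\mu} - a_{\lambda-\bar{\lambda},\mu-\bar{\mu}}$ to obtain
\begin{equation*}
a_{\lambda,\mu}(\zf_1 - \zf_2,\vf) \;=\; -\,a_{h_\lambda,h_\mu}\bigl(\uft(\lambda,\mu)-\uft(\bar{\lambda},\bar{\mu}),\vf\bigr) \;-\; a_{\lambda-\bar{\lambda},\mu-\bar{\mu}}(\zf_2,\vf)\,.
\end{equation*}
Coercivity of $a_{\lambda,\mu}$, continuity of the bilinear form in its coefficients, \eqref{embedding_inequality}, the bound $\norm{\zf_2}_V \leq C\norm{(h_\lambda,h_\mu)}_X$ from Step~1, and the Lipschitz estimate \eqref{uft_continuous} for $\uft$ itself then combine to give the claimed bound, from which continuity of $\uft' : \Msm \to \mathcal{L}(X,V)$ follows at once. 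The main obstacle is purely the bookkeeping: the three distinct roles of the parameters in $a_{\lambda,\mu}$ (as coefficients, as the inserted displacement, and via the test function) must be carefully disentangled, and one has to verify that the coercivity constants stay uniform both under the perturbation $(h_\lambda,h_\mu)$ in Step~2 and when comparing $(\lambda,\mu)$ with $(\bar{\lambda},\bar{\mu})$ in Step~3---this is exactly where the embedding $H^s(\Omega_1) \hookrightarrow L_\infty(\Omega_1)$ for $s > N/2$, already invoked in Lemma~\ref{lem_existence}, is indispensable.
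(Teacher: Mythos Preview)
Your approach is correct and, in fact, more self-contained than the paper's. The paper outsources the Fr\'echet differentiability and the characterization \eqref{a=l_diff} entirely to \cite[Theorem~3.2]{Hubmer_Sherina_Neubauer_Scherzer_2018}, then passes to the operator representation $\uft'(\lambda,\mu)(h_\lambda,h_\mu) = -A_{\lambda,\mu}^{-1}\bigl(A_{h_\lambda,h_\mu}\uft(\lambda,\mu) + \tilde{A}_{h_\lambda,h_\mu}\Phi\bigr)$ and derives the Lipschitz estimate from the identity $\zf_1 - \zf_2 = -A_{\lambda,\mu}^{-1}A_{h_\lambda,h_\mu}\bigl(\uft(\lambda,\mu) - \uft(\bar{\lambda},\bar{\mu})\bigr)$. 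You instead argue throughout at the variational level, and your subtraction in Step~3 correctly produces the additional term $-a_{\lambda-\bar{\lambda},\mu-\bar{\mu}}(\zf_2,\vf)$ on the right-hand side, which the paper's operator identity appears to drop. Your route therefore buys a transparent, reference-free argument, while the paper's buys brevity. One minor point to watch: the bound $\norm{\zf_2}_V \leq C\norm{(h_\lambda,h_\mu)}_X$ you quote from Step~1 carries a constant involving $\norm{\uft(\bar{\lambda},\bar{\mu})}_{H^1}$, hence depending on $(\bar{\lambda},\bar{\mu})$; as stated, your Step~3 then yields a constant uniform over bounded subsets of $\Msm$ rather than over all of $\Msm$. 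This is harmless for the downstream use in Proposition~\ref{prop_elast_u}, where only $(\bar{\lambda},\bar{\mu}) \in B_\eps(\lambda^*,\mu^*)$ is needed, but you should flag it.
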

\begin{proof}
The continuous Fr\'echet differentiability of $\uft$ and the characterization \eqref{a=l_diff} follows directly from \cite[Theorem~3.2]{Hubmer_Sherina_Neubauer_Scherzer_2018}. There, the variational form \eqref{a=l_diff} is expressed in the equivalent form
    \begin{equation*}
        \uft'(\lambda,\mu)(h_\lambda,h_\mu) = - A_{\lambda,\mu}^{-1} \kl{ A_{h_\lambda,h_\mu} \uft(\lambda,\mu) + \tilde{A}_{h_\lambda,h_\mu} \Phi  } \,,
    \end{equation*}
where $A_{\lambda,\mu}:= \tilde{A}_{\lambda,\mu}\vert_V$ and the bounded linear operator $\tilde{A}_{\lambda,\mu}$ is defined by
    \begin{equation*}
        \tilde{A}_{\lambda,\mu} \, : \, H^1(\Omega_1)^N \to V^* \,,
        \qquad 
        \vf \mapsto \kl{ \bar{\vf} \mapsto a_{\lambda,\mu}(\vf,\bar{\vf}) } \,.
    \end{equation*}
Using this equivalent representation, we find that for each $(\bar{\lambda},\bar{\mu}) \in \Msm $ there holds
    \begin{equation*}
        \uft'(\lambda,\mu)(h_\lambda,h_\mu) - \uft'(\bar{\lambda},\bar{\mu})(h_\lambda,h_\mu)  = - A_{\lambda,\mu}^{-1}  \kl{ A_{h_\lambda,h_\mu} \kl{ \uft(\lambda,\mu) -   \uft(\bar{\lambda},\bar{\mu}) } }  \,.
    \end{equation*}
Furthermore, due to \cite[Proposition~3.1]{Hubmer_Sherina_Neubauer_Scherzer_2018}, there exists constants $C_1,C_2 > 0$ such that
    \begin{equation*}
        \norm{A_{\lambda,\mu}^{-1}}_{V^*,V} \leq C_1 \,,
        \qquad \text{and} \qquad
        \norm{A_{\lambda,\mu}}_{V,V^*} \leq C_2 \norm{(\lambda,\mu)}_{L_\infty(\Omega_1)} \leq C_2 c_E^s \norm{(\lambda,\mu)}_X \,,
    \end{equation*}
and thus we obtain
    \begin{equation*}
    \begin{split}
        \norm{\uft'(\lambda,\mu)(h_\lambda,h_\mu) - \uft'(\bar{\lambda},\bar{\mu})(h_\lambda,h_\mu) }_V 
        &= \norm{ A_{\lambda,\mu}^{-1}  \kl{ A_{h_\lambda,h_\mu} \kl{ \uft(\lambda,\mu) -   \uft(\bar{\lambda},\bar{\mu}) } } }_V
        \\
        & \leq
        C_1 C_2 c_E^s \norm{(h_\lambda,h_\mu)}_X \norm{ \uft(\lambda,\mu) -   \uft(\bar{\lambda},\bar{\mu}) }_V\,,
    \end{split}
    \end{equation*}
which together with \eqref{uft_continuous} now yields the assertion.    
\end{proof}

Again, the above results transfer to the non-homogenized solution $\uf = \uft + \Phi \in H^1(\Omega_1)^N$. In particular, $\uf$ is continuously Fr\'echet differentiable with $\uf'(\lambda,\mu)(h_\lambda,h_\mu) = \uft'(\lambda,\mu)(h_\lambda,h_\mu)$, and
    \begin{equation}\label{helper_uf_02}
        \norm{\uf'(\lambda,\mu)(h_\lambda,h_\mu) - \uf'(\bar{\lambda},\bar{\mu})(h_\lambda,h_\mu) }_V 
        \leq C_{\lambda,\mu} \norm{(h_\lambda,h_\mu)}_X \norm{(\lambda,\mu) - (\bar{\lambda},\bar{\mu})}_X \,, 
    \end{equation}
for $C_{\lambda,\mu}$ as in Proposition~\ref{prop_uft_Frechet} and every $(\bar{\lambda},\bar{\mu}) \in \Msm$. Hence, together with $\Omega \subset \Omega_1$ and $L_2(\Omega_1) \subset V$, we find that condition \eqref{Lipschitz_Du} in Part~2 of Assumption~\ref{ass_differentiability} is satisfied for our linear elastic setting as well.

In order to establish the remaining condition \eqref{Lipschitz_u_infty}, note that \eqref{a=l_diff} is the weak form of the BVP
    \begin{equation}\label{BVP_diff}
    \begin{alignedat}{3}
        -\operatorname{div}\kl{\sigma_{\lambda,\mu}(\zf)} & = \operatorname{div}\kl{\sigma_{h_\lambda,h_\mu}(\uft(\lambda,\mu) + \Phi)} \,, \qquad &&\text{in} \quad \Omega_1 
        \\ 
        \zf & = 0 \,, \qquad &&\text{on} \quad \Gamma_D  \,,
        \\
        \sigma_{\lambda,\mu}(\zf) \nv &= -\sigma_{h_\lambda,h_\mu}(\uf(\lambda,\mu)+\Phi) \nv \,,  \qquad &&\text{on} \quad \Gamma_T \,.
    \end{alignedat}    
    \end{equation}
Hence, \eqref{Lipschitz_u_infty} amounts to an interior $L_\infty$-estimate of the weak solution of the above BVP. However, in order to apply standard regularity results for elliptic PDEs \cite{Gilbarg_Trudinger_1998,Necas_2011,McLean_2000}, we first have to ensure that the right hand side in \eqref{BVP_diff}, and thus $\uf(\lambda,\mu)$ is sufficiently regular. For this, we have the following result.

\begin{lemma}\label{lem_regularity}
Let Assumption~\ref{ass_linear_elastic} hold, let $s > N/2 + 1$, $\mub > 0$, $(\lambda,\mu) \in \Msm$, and assume that $\ff \in L_2(\Omega_1)^{N}$, $\gfD \in H^{\frac{3}{2}}(\Gamma_D)^N$, $\Phi \in H^2(\Omega_1)^N$. Then for every bounded, open, connected Lipschitz domain $\Omega_2 \subset \Omega_1$ with $\overline{\Omega_2} \Subset \Omega_1$ the unique solution $\uft(\lambda,\mu)$ of \eqref{a=l} satisfies $\uft\vert_{\Omega_2} \in H^2(\Omega_2)^N$ and $-\operatorname{div}(\sigma_{\lambda,\mu}(\uft)) = f$ pointwise a.e.\ in $\Omega_2$. Furthermore, there is a $c_R = c_R(\lambda,\mu,\Omega_1,\Omega_2) > 0$ such that
    \begin{equation}\label{uft_H2}
        \norm{\uft}_{H^2(\Omega_2)} \leq c_R \kl{\norm{\uft}_{H^1(\Omega_1)} + \norm{\ff}_{L_2(\Omega_1)}} \,. 
    \end{equation}
\end{lemma}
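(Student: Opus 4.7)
The plan is to reduce the weak equation \eqref{a=l} to a strongly elliptic second-order system on $\Omega_1$ with $W^{1,\infty}$-coefficients and $L_2$-right-hand side, and then invoke classical interior $H^2$-regularity. First I would invoke the Sobolev embedding $H^s(\Omega_1)\hookrightarrow W^{1,\infty}(\Omega_1)$, valid since $s>N/2+1$, to conclude that $(\lambda,\mu)\in W^{1,\infty}(\Omega_1)^2$ with $W^{1,\infty}$-norm controlled by $\norm{(\lambda,\mu)}_X$; in particular $\div(\sigma_{\lambda,\mu}(\Phi))\in L_2(\Omega_1)^N$ because $\Phi\in H^2(\Omega_1)^N$. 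Testing \eqref{a=l} only against $\vf\in C_c^\infty(\Omega_1)^N$ and integrating $a_{\lambda,\mu}(\Phi,\vf)$ by parts (whose boundary contributions vanish by compact support) then shows that $\uft$ solves the interior distributional equation
\[
 -\div(\sigma_{\lambda,\mu}(\uft)) \;=\; \ff + \div(\sigma_{\lambda,\mu}(\Phi)) \;=:\; \tilde{\ff} \;\in\; L_2(\Omega_1)^N\,,
\]
with $\norm{\tilde{\ff}}_{L_2(\Omega_1)}\leq \norm{\ff}_{L_2(\Omega_1)} + c\norm{(\lambda,\mu)}_X\norm{\Phi}_{H^2(\Omega_1)}$.

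Expanding the left-hand side gives the second-order system
\[
 -\mu\Delta\uft - (\lambda+\mu)\nabla\div\uft - (\div\uft)\nabla\lambda - 2\mE(\uft)\nabla\mu \;=\; \tilde{\ff}\,,
\]
whose principal part is Legendre--Hadamard elliptic with ellipticity constant depending only on $\mub$ (because $\mu\geq\mub>0$ and $\lambda\geq 0$), and whose lower-order coefficients lie in $L_\infty(\Omega_1)$. For systems of this type the classical Nirenberg difference-quotient argument (as in \cite{Gilbarg_Trudinger_1998,Necas_2011,McLean_2000}, adapted componentwise to the elasticity system) yields interior $H^2$-regularity: for any Lipschitz domain $\Omega_2$ with $\overline{\Omega_2}\Subset\Omega_1$ we obtain $\uft\vert_{\Omega_2}\in H^2(\Omega_2)^N$ together with
\[
 \norm{\uft}_{H^2(\Omega_2)} \;\leq\; \tilde{c}_R\kl{\norm{\uft}_{H^1(\Omega_1)} + \norm{\tilde{\ff}}_{L_2(\Omega_1)}}\,,
\]
where $\tilde{c}_R$ depends on $\operatorname{dist}(\Omega_2,\partial\Omega_1)$, $\mub$, and $\norm{(\lambda,\mu)}_{W^{1,\infty}}$. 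Substituting the $\tilde{\ff}$-bound and absorbing the fixed $\Phi$-contribution into a new constant $c_R=c_R(\lambda,\mu,\Omega_1,\Omega_2)$ then produces \eqref{uft_H2}. Once $\uft\in H^2(\Omega_2)$, the weak interior equation becomes a pointwise a.e.\ identity on $\Omega_2$, and subtracting the analogous $H^2$-identity for $\Phi$ recovers $-\div(\sigma_{\lambda,\mu}(\uf))=\ff$ almost everywhere on $\Omega_2$.

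The main obstacle I anticipate is locating a reference that states interior $H^2$-regularity directly for the variable-coefficient linear elasticity system rather than for a generic scalar strongly elliptic equation. However, the difference-quotient proof transfers verbatim: Legendre--Hadamard ellipticity controls the quadratic form after taking tangential difference quotients, and every commutator produced by the variable coefficients involves only $\nabla\lambda,\nabla\mu\in L_\infty(\Omega_1)$, so these terms can either be absorbed into the right-hand side or, via Young's inequality, into the principal term. No genuinely new ideas are needed beyond careful bookkeeping of the constants and the observation that the hypothesis $s>N/2+1$ (one derivative more than in Lemma~\ref{lem_existence}) is precisely what places $\nabla\lambda$ and $\nabla\mu$ in $L_\infty$.
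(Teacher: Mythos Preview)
Your proposal is correct and follows essentially the same route as the paper. The paper's proof is a single line invoking \cite[Theorem~4.16]{McLean_2000} together with the regularity of $(\lambda,\mu)\in\Msm$; this is precisely the interior $H^2$-regularity result for strongly elliptic \emph{systems} with Lipschitz coefficients that you were worried about locating, so your anticipated obstacle is already resolved by the paper's chosen reference, and the Sobolev embedding $H^s\hookrightarrow W^{1,\infty}$ for $s>N/2+1$ is exactly the ``observing the regularity'' step the paper alludes to.
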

\begin{proof}
This follows directly from \cite[Theorem~4.16]{McLean_2000} observing the regularity of $(\lambda,\mu) \in \Msm$.
\end{proof}

With these preliminaries, we can now establish the following regularity result for $\uft'(\lambda,\mu)$.

\begin{proposition}\label{prop_uft_D_H2}
Let the assumptions of Lemma~\ref{lem_regularity} hold and assume that there exists a bounded, open, connected Lipschitz domain $\Omega' \subset \Omega_1$ with $\overline{\Omega} \Subset \Omega'$ and $\overline{\Omega'} \Subset \Omega_1$. Then the unique solution $\zf = \uft'(\lambda,\mu)(h_\lambda,h_\mu)$ of \eqref{a=l_diff} satisfies $\zf\vert_{\Omega} \in H^2(\Omega)^N$ and $-\operatorname{div}(\sigma_{\lambda,\mu}(\uft)) = \operatorname{div}\kl{\sigma_{h_\lambda,h_\mu}(\uft(\lambda,\mu) + \Phi)}$ pointwise a.e.\ in $\Omega$. Furthermore, there is a constant $c_R' = c_R'(\lambda,\mu,N,s,\Omega,\Omega',\Omega_1) > 0$ such that
    \begin{equation}\label{uft_D_H2}
    \begin{split}
        &\norm{\uft'(\lambda,\mu)(h_\lambda,h_\mu)}_{H^2(\Omega)} 
        \\
        & \qquad
        \leq c_R' \norm{(h_\lambda,h_\mu)}_{W^{1,\infty}(\Omega_1)} 
        \kl{
        \norm{\ff}_{L_2(\Omega_1)}  + \norm{\gfT}_{H^{-\frac{1}{2}}(\Gamma_T)} + \kl{1 + \norm{(\lambda,\mu)}_{L_\infty(\Omega_1)} }\norm{\Phi}_{H^2(\Omega_1)} }\,.
    \end{split}    
    \end{equation} 
\end{proposition}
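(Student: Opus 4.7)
The plan is to bootstrap interior regularity twice, using the nested domains $\Omega \Subset \Omega' \Subset \Omega_1$. The key observation is that while the right-hand side of \eqref{BVP_diff} involves the divergence of $\sigma_{h_\lambda,h_\mu}(\uft + \Phi)$ and is only an $H^{-1}(\Omega_1)$-functional from the viewpoint of the weak formulation \eqref{a=l_diff}, once we know that $\uft$ has higher regularity on the intermediate domain $\Omega'$, this divergence becomes an honest $L_2(\Omega')$ function. We can then invoke the same interior elliptic regularity result underlying Lemma~\ref{lem_regularity} on the pair $(\Omega,\Omega')$ to conclude.

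First, since $s > N/2 + 1$, the Sobolev embedding $H^s(\Omega_1) \hookrightarrow W^{1,\infty}(\Omega_1)$ yields a constant $c_E^{s-1} > 0$ with $\norm{v}_{W^{1,\infty}(\Omega_1)} \leq c_E^{s-1} \norm{v}_{H^s(\Omega_1)}$, which controls both $(h_\lambda,h_\mu)$ and $(\lambda,\mu)$ in $W^{1,\infty}$. Since $\overline{\Omega'} \Subset \Omega_1$, Lemma~\ref{lem_regularity} applied with $\Omega_2 := \Omega'$ gives $\uft \in H^2(\Omega')^N$ together with \eqref{uft_H2}, and combined with $\Phi \in H^2(\Omega_1)^N$ this yields $\uft + \Phi \in H^2(\Omega')^N$. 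Expanding the divergence of $\sigma_{h_\lambda,h_\mu}(\uft+\Phi)$ via the product rule, every resulting term is of the form (a $W^{1,\infty}$-function) $\times$ (a derivative of an $H^2$-function), so
\begin{equation*}
    \norm{\operatorname{div}(\sigma_{h_\lambda,h_\mu}(\uft+\Phi))}_{L_2(\Omega')} \leq C \norm{(h_\lambda,h_\mu)}_{W^{1,\infty}(\Omega_1)} \norm{\uft + \Phi}_{H^2(\Omega')} \,.
\end{equation*}
Chaining this with \eqref{uft_H2} and \eqref{uft_bound} produces an $L_2(\Omega')$ bound on the right-hand side of \eqref{BVP_diff} in terms of $\norm{\ff}_{L_2(\Omega_1)}$, $\norm{\gfT}_{H^{-\frac{1}{2}}(\Gamma_T)}$, $\norm{\Phi}_{H^2(\Omega_1)}$, $\norm{(\lambda,\mu)}_{L_\infty(\Omega_1)}$, and $\norm{(h_\lambda,h_\mu)}_{W^{1,\infty}(\Omega_1)}$.

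Second, I would re-apply \cite[Theorem~4.16]{McLean_2000} (in the same interior form used to prove Lemma~\ref{lem_regularity}) to the weak BVP \eqref{a=l_diff} for $\zf$, but now on the pair $(\Omega,\Omega')$, for which $\overline{\Omega} \Subset \Omega'$ by hypothesis. Since the right-hand side lies in $L_2(\Omega')$ by the previous step and the differential operator $-\operatorname{div}(\sigma_{\lambda,\mu}\,\cdot\,)$ retains the same coefficient regularity and ellipticity properties as in Lemma~\ref{lem_regularity}, one obtains $\zf|_\Omega \in H^2(\Omega)^N$, the pointwise identity $-\operatorname{div}(\sigma_{\lambda,\mu}(\zf)) = \operatorname{div}(\sigma_{h_\lambda,h_\mu}(\uft+\Phi))$ a.e.\ in $\Omega$, and an interior estimate
\begin{equation*}
    \norm{\zf}_{H^2(\Omega)} \leq c_R'' \kl{ \norm{\zf}_{H^1(\Omega')} + \norm{\operatorname{div}(\sigma_{h_\lambda,h_\mu}(\uft+\Phi))}_{L_2(\Omega')} } \,.
\end{equation*}
The remaining term $\norm{\zf}_{H^1(\Omega')}$ is controlled by the energy estimate for \eqref{a=l_diff} analogous to Lemma~\ref{lem_existence} (which only needs $L_\infty$ bounds on $h_\lambda,h_\mu$); substituting and absorbing all constants into a single $c_R'$ then gives \eqref{uft_D_H2}.

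The principal difficulty is the careful tracking of the right-hand side of \eqref{BVP_diff}: it is a divergence whose terms mix first derivatives of $h_\lambda,h_\mu$, controllable only in $L_\infty$ — which is exactly why the stronger hypothesis $s > N/2 + 1$ is needed — with second derivatives of $\uft + \Phi$, which are only available on a strict subdomain of $\Omega_1$. This forces the introduction of the intermediate domain $\Omega'$ and the two-stage bootstrap. All remaining steps reduce to classical interior elliptic regularity and to the bounds already established in Lemma~\ref{lem_existence}, Lemma~\ref{lem_regularity}, and Proposition~\ref{prop_uft_Frechet}.
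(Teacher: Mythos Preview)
Your proposal is correct and follows essentially the same two-stage bootstrap as the paper: apply Lemma~\ref{lem_regularity} on the pair $(\Omega',\Omega_1)$ to upgrade $\uft$ to $H^2(\Omega')$, use this to place $\operatorname{div}(\sigma_{h_\lambda,h_\mu}(\uft+\Phi))$ in $L_2(\Omega')$, and then invoke the same interior regularity result \cite[Theorem~4.16]{McLean_2000} on $(\Omega,\Omega')$ for $\zf$, closing with the $H^1$ energy bound on $\zf$. The only cosmetic difference is that the paper phrases the second application of the regularity theorem as still living on $\Omega_1$ (and remarks that $\ff_1 \in L_2(\Omega')$ rather than $L_2(\Omega_1)$ suffices upon inspecting the proof), whereas you apply it directly on the pair $(\Omega,\Omega')$; the resulting estimates are equivalent.
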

\begin{proof}
First, note that due to Lemma~\ref{lem_regularity} there holds $\uft(\lambda,\mu)\vert_{\Omega'} \in H^2(\Omega')^N$. Furthermore, we have
    \begin{equation}\label{helper_z_02}
        \norm{\operatorname{div}\kl{\sigma_{h_\lambda,h_\mu}(\uft(\lambda,\mu) + \Phi)}}_{L_2(\Omega')}  
        \leq 
        c_G \max\Kl{\norm{h_\lambda}_{W^{1,\infty}(\Omega')},\norm{h_\mu}_{W^{1,\infty}(\Omega')}} \norm{\uft(\lambda,\mu) + \Phi}_{H^2(\Omega')} \,,
    \end{equation} 
for some $c_G = c_G(N)$, which is finite since $(h_\lambda,h_\mu)\in\Msm \subset W^{1,\infty}(\Omega_1)$ for $s > N/2 +1$. Next, we want to apply \cite[Theorem~4.16]{McLean_2000} to obtain the $H^2(\Omega)$ regularity of $\zf := \uft'(\lambda,\mu)(h_\lambda,h_\mu)$. For this, it is formally required that $\ff_1 := \operatorname{div}\kl{\sigma_{h_\lambda,h_\mu}(\uft(\lambda,\mu) + \Phi)} \in L_2(\Omega_1)$. However, a close inspection of the proof shows that $\ff_1 \in L_2(\Omega')$ is sufficient (cf.~also \cite{Necas_2011,Gilbarg_Trudinger_1998,Evans_1998}), and we obtain that $\zf\vert_{\Omega} \in H^2(\Omega)^N$ with
    \begin{equation}\label{helper_z_01}
        \norm{\zf}_{H^2(\Omega)} = \norm{\uft'(\lambda,\mu)(h_\lambda,h_\mu)}_{H^2(\Omega)} \leq \tilde{c}_R \kl{\norm{\uft'(\lambda,\mu)(h_\lambda,h_\mu)}_{H^1(\Omega_1)} + \norm{\ff_1}_{L_2(\Omega')}} \,. 
    \end{equation}
for some constant $\tilde{c}_R = \tilde{c}_R(\lambda,\mu,\Omega,\Omega_1,\Omega_2)$. Furthermore, as in the proof of Proposition~\ref{prop_uft_Frechet}, we obtain
    \begin{equation*}
    \begin{split}
        \norm{\uft'(\lambda,\mu)(h_\lambda,h_\mu)}_{H^1(\Omega_1)}
        &\leq 
        C \norm{(h_\lambda,h_\mu)}_{L_\infty(\Omega_1)} \norm{\uft(\lambda,\mu) + \Phi}_V 
        \\
        &\leq
        C \norm{(h_\lambda,h_\mu)}_{L_\infty(\Omega_1)} \kl{\norm{\uft(\lambda,\mu)}_{H^1(\Omega_1)} + \norm{\Phi}_{H^1(\Omega_1)} }
        \,.
    \end{split}
    \end{equation*}
for some constant $C = C(s)$. Combining this with \eqref{helper_z_01} and \eqref{helper_z_02}, we thus find that
    \begin{equation*}
    \begin{split}
        &\norm{\uft'(\lambda,\mu)(h_\lambda,h_\mu)}_{H^2(\Omega)} \leq  
        \tilde{c}_R \kl{\norm{\uft'(\lambda,\mu)(h_\lambda,h_\mu)}_{H^1(\Omega_1)} + \norm{\ff_1}_{L_2(\Omega')}}
        \\
        &\qquad\leq
        \tilde{c}_R \Big( C \norm{(h_\lambda,h_\mu)}_{L_\infty(\Omega_1)} \kl{\norm{\uft(\lambda,\mu)}_{H^1(\Omega_1)} + \norm{\Phi}_{H^1(\Omega_1)} }   
        \\
        & \qquad\qquad\qquad +
        c_G \max\Kl{\norm{h_\lambda}_{W^{1,\infty}(\Omega')},\norm{h_\mu}_{W^{1,\infty}(\Omega')}} \norm{\uft(\lambda,\mu) + \Phi}_{H^2(\Omega')}      
        \Big) 
        \\
        & \qquad
        \leq \tilde{c}_R (C+2c_G) \norm{(h_\lambda,h_\mu)}_{W^{1,\infty}(\Omega_1)} \kl{\norm{\uft(\lambda,\mu)}_{H^1(\Omega_1)} + \norm{\uft(\lambda,\mu)}_{H^2(\Omega')} + 2 \norm{\Phi}_{H^2(\Omega_1)} } \,,
    \end{split}    
    \end{equation*}
which together with \eqref{uft_H2} yields
    \begin{equation*}
    \begin{split}
        &\norm{\uft'(\lambda,\mu)(h_\lambda,h_\mu)}_{H^2(\Omega)} 
        \\
        & \qquad
        \leq \tilde{c}_R (C+2c_G) \norm{(h_\lambda,h_\mu)}_{W^{1,\infty}(\Omega_1)} \kl{(1+c_R)\norm{\uft(\lambda,\mu)}_{H^1(\Omega_1)} + c_R \norm{\ff}_{L_2(\Omega_1)}  + 2 \norm{\Phi}_{H^2(\Omega_1)} } \,,
    \end{split}    
    \end{equation*}    
Finally, using \eqref{uft_bound} we obtain
    \begin{equation*}
    \begin{split}
        &\norm{\uft'(\lambda,\mu)(h_\lambda,h_\mu)}_{H^2(\Omega)} 
        \leq \tilde{c}_R (C+2c_G) \norm{(h_\lambda,h_\mu)}_{W^{1,\infty}(\Omega_1)} 
        \Big( 
        c_R \norm{\ff}_{L_2(\Omega_1)}  + 2 \norm{\Phi}_{H^2(\Omega_1)}
        \\
        & \quad
        + (1+c_R)C \kl{ \norm{\ff}_{H^{-1}(\Omega_1)} + \norm{\gfT}_{H^{-\frac{1}{2}}(\Gamma_T)} 
        + \norm{(\lambda,\mu)}_{L_\infty(\Omega_1)} \norm{\Phi}_{H^1(\Omega_1)} } \Big) \,,
    \end{split}    
    \end{equation*}   
which after rearranging the terms now yields the assertion.
\end{proof}

Using the above interior regularity result, we obtain the following $L_\infty$-estimate for $\uft'(\lambda,\mu)$.

\begin{corollary}\label{cor_Lipschitz}
Let $N \in \Kl{1,2,3}$, $s > N/2 + 1$, and let the assumptions of Proposition~\ref{prop_uft_D_H2} hold. Then
    \begin{equation*}
    \begin{split}
        \norm{\uft'(\lambda,\mu)(h_\lambda,h_\mu)}_{L_\infty(\Omega)} 
        \leq C_\infty \norm{(h_\lambda,h_\mu)}_{X} 
        \kl{
        \norm{\ff}_{L_2(\Omega_1)}  + \norm{\gfT}_{H^{-\frac{1}{2}}(\Gamma_T)} + \kl{1 + \norm{(\lambda,\mu)}_{X} }\norm{\Phi}_{H^2(\Omega_1)} }\,.
    \end{split}    
    \end{equation*}
for some constant $C_\infty = C_\infty(\lambda,\mu,N,s,\Omega,\Omega',\Omega_1) > 0$.
\end{corollary}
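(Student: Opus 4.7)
The plan is to combine the interior $H^2$ regularity estimate from Proposition~\ref{prop_uft_D_H2} with two Sobolev embeddings: one to pass from $H^2(\Omega)$ to $L_\infty(\Omega)$ on the left-hand side, and one to pass from $H^s(\Omega_1)$ to $W^{1,\infty}(\Omega_1)$ on the right-hand side. The explicit restriction $N \in \{1,2,3\}$ together with $s > N/2 + 1$ in the hypotheses is precisely what is needed for both embeddings to hold.

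First, I would invoke Proposition~\ref{prop_uft_D_H2} directly, which gives $\uft'(\lambda,\mu)(h_\lambda,h_\mu)\!\restriction_{\Omega} \in H^2(\Omega)^N$ together with the bound \eqref{uft_D_H2}. Next, since $N \in \{1,2,3\}$, we have $2 > N/2$, and so by the standard Sobolev embedding theorem \cite{Adams_1970} there is a constant $c_E^{(2)} = c_E^{(2)}(\Omega,N) > 0$ with
\begin{equation*}
    \norm{\vf}_{L_\infty(\Omega)} \leq c_E^{(2)} \norm{\vf}_{H^2(\Omega)} \,,
    \qquad \forall \, \vf \in H^2(\Omega) \,,
\end{equation*}
which, applied componentwise to $\uft'(\lambda,\mu)(h_\lambda,h_\mu)$, yields an $L_\infty(\Omega)$ bound controlled by the right-hand side of \eqref{uft_D_H2}.

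It then remains to replace the $W^{1,\infty}(\Omega_1)$-norm of $(h_\lambda,h_\mu)$ and the $L_\infty(\Omega_1)$-norm of $(\lambda,\mu)$ by the $X = H^s(\Omega_1)^2$-norms appearing in the statement. For the former, the assumption $s > N/2 + 1$ with $N \leq 3$ gives the Sobolev embedding $H^s(\Omega_1) \hookrightarrow W^{1,\infty}(\Omega_1)$, so that there is a constant $c_E^{s,1} > 0$ with
\begin{equation*}
    \norm{(h_\lambda,h_\mu)}_{W^{1,\infty}(\Omega_1)} \leq c_E^{s,1} \norm{(h_\lambda,h_\mu)}_{X} \,,
\end{equation*}
cf.\ \cite{Adams_1970}. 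For the latter, we simply reuse the embedding inequality \eqref{embedding_inequality} to bound $\norm{(\lambda,\mu)}_{L_\infty(\Omega_1)} \leq c_E^s \norm{(\lambda,\mu)}_X$.

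Plugging these two estimates into \eqref{uft_D_H2} and combining all prefactors into a single constant $C_\infty := c_E^{(2)} \, c_R' \, c_E^{s,1} \max\{1, c_E^s\}$, which depends only on $\lambda,\mu,N,s,\Omega,\Omega',\Omega_1$, yields exactly the claimed estimate. I do not expect any serious obstacle here: the content of the corollary is essentially a bookkeeping exercise, chaining two Sobolev embeddings onto an already-proved $H^2(\Omega)$ estimate. The only subtlety worth flagging is that the embedding $H^2 \hookrightarrow L_\infty$ is exactly why the dimension must be restricted to $N \leq 3$, and correspondingly $H^s \hookrightarrow W^{1,\infty}$ is why $s > N/2 + 1$ (rather than the weaker $s > N/2$ used earlier) is imposed.
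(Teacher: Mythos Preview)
Your proposal is correct and follows essentially the same approach as the paper: apply the Sobolev embedding $H^2(\Omega)\hookrightarrow L_\infty(\Omega)$ (valid for $N\le 3$) to the left-hand side of \eqref{uft_D_H2}, and then use $H^s(\Omega_1)\hookrightarrow W^{1,\infty}(\Omega_1)$ for $s>N/2+1$ on the right-hand side. Your write-up is in fact slightly more explicit than the paper's, since you spell out the additional step of bounding $\norm{(\lambda,\mu)}_{L_\infty(\Omega_1)}$ by $\norm{(\lambda,\mu)}_X$ via \eqref{embedding_inequality}, which the paper leaves implicit.
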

\begin{proof}
Due to the Sobolev embedding inequality \eqref{embedding_inequality} it follows that for $N \in \Kl{1,2,3}$ there holds
    \begin{equation*}
        \norm{\uft'(\lambda,\mu)(h_\lambda,h_\mu)}_{L_\infty(\Omega)} \leq c_E^s \norm{\uft'(\lambda,\mu)(h_\lambda,h_\mu)}_{H^2(\Omega)} \,.
    \end{equation*}
which together with \eqref{uft_D_H2} and $X = H^s(\Omega_1) \subset W^{1,\infty}(\Omega_1)$ for $s > N/2 +1$ yields the assertion.
\end{proof}

Since $\uf(\lambda,\mu) = \uft(\lambda,\mu) + \Phi$, and thus $\uf'(\lambda,\mu) = \uft'(\lambda,\mu)$, the above estimate also holds for $\uf'(\lambda,\mu)$, which establishes \eqref{Lipschitz_u_infty}. Hence, all model assumptions on $\uf(\lambda,\mu)$ are satisfied, as we summarize below.

\begin{proposition}\label{prop_elast_u}
Let $N \in \Kl{1,2,3}$, $s > N/2 + 1$, and let the assumptions of Proposition~\ref{prop_uft_D_H2} hold. Furthermore, let $\uf(\lambda,\mu) = \uft(\lambda,\mu) + \Phi$, with $\uft(\lambda,\mu)$ defined by \eqref{a=l}, and for $(\lambda_0,\mu_0) \in \Msm$ let $(\lambda^*,\mu^*)$ be an $(\lambda_0,\mu_0)$-MNS of the equation $\mI_2 \circ G(\lambda,\mu) = \mI_1$. Then $\uf$ satisfies \eqref{Lipschitz_u_infty}, \eqref{Lipschitz_u}, \eqref{Lipschitz_Du} with   
    \begin{equation*}
    \begin{split}
        C_1^* &=  C_\infty 
        \kl{
        \norm{\ff}_{L_2(\Omega_1)}  + \norm{\gfT}_{H^{-\frac{1}{2}}(\Gamma_T)} + \kl{1 + \norm{(\lambda,\mu)}_{X} }\norm{\Phi}_{H^2(\Omega_1)} }  \,,
    \end{split}
    \end{equation*}
and $C_2^* = C_3^* = C_{\lambda^*,\mu^*}$, where $C_{\lambda^*,\mu*}$ is the constant $C_{\lambda,\mu}$ from  Proposition~\ref{prop_uft_continuous} for $(\lambda,\mu) = (\lambda^*,\mu^*)$.
\end{proposition}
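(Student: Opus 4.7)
The plan is to assemble the three conditions \eqref{Lipschitz_u_infty}, \eqref{Lipschitz_u}, and \eqref{Lipschitz_Du} directly from the results already established for $\uft$ in the preceding lemma and propositions, invoking the identity $\uf(\lambda,\mu) = \uft(\lambda,\mu) + \Phi$ throughout. Since $\Phi$ is independent of $(\lambda,\mu)$, it cancels from all differences and from the Fréchet derivative, i.e.\ $\uf(\lambda,\mu) - \uf(\bar\lambda,\bar\mu) = \uft(\lambda,\mu) - \uft(\bar\lambda,\bar\mu)$ and $\uf'(\lambda,\mu) = \uft'(\lambda,\mu)$. This reduces every assertion of the proposition to an assertion about $\uft$ that has already been proved.

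First, I would handle \eqref{Lipschitz_u_infty}. Applying Corollary~\ref{cor_Lipschitz} at $(\lambda,\mu) = (\lambda^*,\mu^*)$ yields
\begin{equation*}
    \norm{\uft'(\lambda^*,\mu^*)(h_\lambda,h_\mu)}_{L_\infty(\Omega)} \leq C_\infty \norm{(h_\lambda,h_\mu)}_X \bigl(\norm{\ff}_{L_2(\Omega_1)} + \norm{\gfT}_{H^{-1/2}(\Gamma_T)} + (1+\norm{(\lambda^*,\mu^*)}_X)\norm{\Phi}_{H^2(\Omega_1)}\bigr),
\end{equation*}
which, since $\uf' = \uft'$, is precisely \eqref{Lipschitz_u_infty} with the constant $C_1^*$ as stated in the proposition.

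Next, for \eqref{Lipschitz_u}, I would use Proposition~\ref{prop_uft_continuous} at $(\lambda,\mu) = (\lambda^*,\mu^*)$, which supplies the constant $C_{\lambda^*,\mu^*}$ and the bound \eqref{helper_uf_01}, i.e.\ $\norm{\uft(\lambda^*,\mu^*) - \uft(\bar\lambda,\bar\mu)}_V \leq C_{\lambda^*,\mu^*} \norm{(\lambda^*,\mu^*) - (\bar\lambda,\bar\mu)}_X$. The desired $L_2(\Omega)$ estimate then follows from the chain of embeddings $V = H^1_{0,\Gamma_D}(\Omega_1)^N \hookrightarrow L_2(\Omega_1)^N \hookrightarrow L_2(\Omega)^N$ (the last inclusion thanks to $\Omega \subset \Omega_1$), so one can set $C_2^* = C_{\lambda^*,\mu^*}$. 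Analogously, for \eqref{Lipschitz_Du}, Proposition~\ref{prop_uft_Frechet} (i.e.\ the estimate \eqref{helper_uf_02}) at $(\lambda,\mu) = (\lambda^*,\mu^*)$ gives exactly the needed bound in the $V$ norm, which upon restriction to $\Omega$ and use of the same embeddings yields \eqref{Lipschitz_Du} with $C_3^* = C_{\lambda^*,\mu^*}$.

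There is essentially no hard step: the proposition is a summary result, and the real work was done in Proposition~\ref{prop_uft_continuous} (Lipschitz continuity of $\uft$), Proposition~\ref{prop_uft_Frechet} (Lipschitz continuity of $\uft'$), and Corollary~\ref{cor_Lipschitz} (the interior $L_\infty$ bound on $\uft'$, which itself rested on the $H^2$ interior regularity of Proposition~\ref{prop_uft_D_H2} combined with Sobolev embedding for $N \leq 3$ and $s > N/2+1$). The only point requiring care is bookkeeping: verifying that the $V$-norm (or $H^2(\Omega)$-norm) estimates coming from those results dominate the $L_2(\Omega)$- and $L_\infty(\Omega)$-norms appearing in Assumption~\ref{ass_differentiability}, and checking that the hypothesis $s > N/2+1$ inherited from Proposition~\ref{prop_uft_D_H2} is compatible with the Sobolev embedding $H^s(\Omega_1) \hookrightarrow W^{1,\infty}(\Omega_1)$ needed to absorb $\norm{(h_\lambda,h_\mu)}_{W^{1,\infty}(\Omega_1)}$ into $\norm{(h_\lambda,h_\mu)}_X$ in the statement of $C_1^*$.
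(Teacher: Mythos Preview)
Your proposal is correct and follows essentially the same approach as the paper's proof: the paper likewise just cites Proposition~\ref{prop_uft_continuous} and \eqref{helper_uf_01} for \eqref{Lipschitz_u}, Proposition~\ref{prop_uft_Frechet} and \eqref{helper_uf_02} for \eqref{Lipschitz_Du}, and Corollary~\ref{cor_Lipschitz} for \eqref{Lipschitz_u_infty}. Your additional remarks on the embeddings $V \hookrightarrow L_2(\Omega_1) \hookrightarrow L_2(\Omega)$ and on the role of $s > N/2+1$ make explicit exactly the bookkeeping the paper leaves implicit in ``the above results and comments.''
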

\begin{proof}
This directly follows from the above results and comments. In particular, Proposition~\ref{prop_uft_continuous} and \eqref{helper_uf_01} establish \eqref{Lipschitz_u}, Proposition~\ref{prop_uft_Frechet} and \eqref{helper_uf_02} show \eqref{Lipschitz_Du}, and Corollary~\ref{cor_Lipschitz} yields \eqref{Lipschitz_u_infty}. 
\end{proof}

With this, we obtain the following convergence rate result for the IIM in our linear elastic setting.

\begin{theorem}\label{thm_rates_elastic}
Let $N \in \Kl{1,2,3}$, $s > N/2 +1$, $0 < \mub \in \R$, and let Assumption~\ref{ass_linear_elastic} hold. In addition, assume $\ff \in L_2(\Omega_1)^N$, $g_D \in H^{\frac{3}{2}}(\Gamma_D)^N$, $\Phi \in H^2(\Omega_1)$, and that $\Omega\,, \Omega_2 \subset \R^N$ are open and bounded domains with a Lipschitz continuous boundary. Also, assume that there exists a bounded, open, connected Lipschitz domain $\Omega' \subset \Omega_1$ with $\overline{\Omega} \Subset \Omega'$ and $\overline{\Omega'} \Subset \Omega_1$. Furthermore, let $\uf(\lambda,\mu) := \uft(\lambda,\mu) + \Phi$, where $\uft(\lambda,\mu)$ is defined by \eqref{a=l} for $(\lambda,\mu) \in \Msm$, with $\Msm$ as in \eqref{def_Msm}, and assume that \eqref{cond_domain_elasticity} holds. Moreover, let $\mI_2 \in W^{1,\infty}(\Omega_2)$ and $C_L := \norm{\mI_2}_{W^{1,\infty}(\Omega_2)}$, $\mI_1 \in L_2(\Omega_1)$, $(\lambda_0,\mu_0) \in \Msm$, assume that there exists an $(\lambda_0,\mu_0)$-MNS $(\lambda^*,\mu^*)$ of $\mI_2 \circ G(\lambda^*,\mu^*) = \mI_1$, and that there is a noise level $\delta_{\mI_1} > 0$ such that $\Vert \mI_1 - \mI_1^\delta\Vert \leq \delta_{\mI_1}$ for $\mI_1^\delta \in L_2(\Omega_1)$. Finally, assume that there exists a $\wf \in L_2(\Omega)$ with
    \begin{equation*}
        (\lambda^\dagger,\mu^\dagger) - (\lambda_0,\mu_0)  = \mathcal{F}'(\af^\dagger)^*\wf \,,
        \qquad \text{and} \qquad
        C \norm{\wf} < 1 \,,
    \end{equation*}
where $C := C_L(C_1^*C_2^* + C_3^*)$ with $C_k^*$ as in Proposition~\ref{prop_elast_u}. Then for $\alpha \sim \delta_{\mI_1}$, a minimizer $(\lambda_\alpha^\delta,\mu_\alpha^\delta)$ of 
    \begin{equation*}
        \norm{\mI_2\circ G(\lambda,\mu) - \mI_1^\delta}_{L_2(\Omega)}^2 + \alpha \norm{(\lambda,\mu) - (\lambda_0,\mu_0)}_{H^s(\Omega_1)}^2 \,,
    \end{equation*}
satisfies
    \begin{equation*}
        \norm{(\lambda_\alpha^\delta,\mu_\alpha^\delta) - (\lambda^\dagger,\mu^\dagger) }_{H^s(\Omega_1)} = \mathcal{O}\kl{\sqrt{\delta_{\mI_1}}} \,,
        \qquad \text{and} \qquad 
        \norm{\mI_2\circ G(\lambda_\alpha^\delta,\mu_\alpha^\delta) - \mI_1^{\delta}}_{L_2(\Omega)} = \mathcal{O}\kl{\delta_{\mI_1}} \,.
    \end{equation*}
\end{theorem}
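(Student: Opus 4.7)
The plan is to reduce Theorem~\ref{thm_rates_elastic} to the abstract convergence rate result in the restricted noise case that was already proved earlier in Section~\ref{sect_convergence} (the theorem following Proposition~\ref{prop_differentiability}). For this, it is enough to verify that, in the present linear-elastic setting, all of Assumption~\ref{ass_minimal}, Assumption~\ref{ass_model_minimal}, Assumption~\ref{ass_differentiability}, and the source condition listed in that abstract theorem are met. Most of these verifications have already been carried out in this section in the course of proving Theorem~\ref{theorem_convergence_elastic} and Proposition~\ref{prop_elast_u}, so the present proof will mainly consist of citing them and checking that the constant $C$ in the source condition coincides with the one required.

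First, I would verify Assumption~\ref{ass_minimal}: the Lipschitz regularity of $\Omega_1,\Omega_2,\Omega$ and the boundedness assumption are hypotheses of the theorem, $\mathcal{R}$ is of the form~\eqref{def_R_norm} with $X = H^s(\Omega_1)^2$, the condition $\mI_1\in L_2(\Omega_1)$ is assumed and $\mI_2\in W^{1,\infty}(\Omega_2)\subset L_\infty(\Omega_2)$ follows from the regularity hypothesis on $\mI_2$, unique solvability of~\eqref{eq:bvp} for each $(\lambda,\mu)\in\Msm$ comes from Lemma~\ref{lem_existence} applied to the homogenized problem~\eqref{a=l} together with the decomposition $\uf=\uft+\Phi$, and the domain requirement~\eqref{domain_condition} is exactly~\eqref{cond_domain_elasticity}. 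For Assumption~\ref{ass_model_minimal}, the continuity and weak sequential closedness of $\F(\lambda,\mu)=\mI_2\circ G(\lambda,\mu)$ follow from Proposition~\ref{prop_uft_continuous} (continuity and weak closedness of $\uft$, hence of $\uf$ and $G$) together with Proposition~\ref{prop_differentiability} applied with $\mI_2\in W^{1,\infty}(\Omega_2)$; existence of an $(\lambda_0,\mu_0)$-MNS is a direct hypothesis; and the noise bound $\|\mI_1-\mI_1^\delta\|\leq \delta_{\mI_1}$ is also assumed.

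Next, I would verify Assumption~\ref{ass_differentiability}: Part~1 holds because we assume $\mI_2\in W^{1,\infty}(\Omega_2)$ with $C_L=\|\mI_2\|_{W^{1,\infty}(\Omega_2)}$, and continuous Fr\'echet differentiability of $\mI_2$ in the sense required follows from its $W^{1,\infty}$-regularity combined with the smoothness of $G(\af)$ provided by Proposition~\ref{prop_uft_Frechet}. Part~2, namely the three bounds~\eqref{Lipschitz_u_infty}, \eqref{Lipschitz_u}, \eqref{Lipschitz_Du}, is exactly the content of Proposition~\ref{prop_elast_u}, which in particular gives the constants $C_1^*$, $C_2^*$ and $C_3^*$ as stated in the theorem. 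Hence Proposition~\ref{prop_differentiability} is applicable, and produces a constant $C=C_L(C_1^*C_2^*+C_3^*)$, which is exactly the constant appearing in the source condition of our theorem.

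Finally, with all assumptions verified, I would invoke the abstract convergence rate theorem for the restricted noise case (the first rates theorem in Section~\ref{sect_convergence}), which itself follows from \cite[Theorem~10.4]{Engl_Hanke_Neubauer_1996}. Since the source condition $(\lambda^\dagger,\mu^\dagger)-(\lambda_0,\mu_0) = \F'(\af^\dagger)^*\wf$ with $C\|\wf\|<1$ is assumed, and the a-priori parameter choice $\alpha\sim\delta_{\mI_1}$ satisfies the hypotheses, the theorem immediately yields the rates $\|(\lambda_\alpha^\delta,\mu_\alpha^\delta)-(\lambda^\dagger,\mu^\dagger)\|_{H^s(\Omega_1)} = \mathcal{O}(\sqrt{\delta_{\mI_1}})$ and $\|\mI_2\circ G(\lambda_\alpha^\delta,\mu_\alpha^\delta)-\mI_1^\delta\|_{L_2(\Omega)}=\mathcal{O}(\delta_{\mI_1})$. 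The only subtlety, and hence the main obstacle, is bookkeeping: one has to check carefully that the Sobolev embedding $H^s(\Omega_1)\hookrightarrow W^{1,\infty}(\Omega_1)$ used in Proposition~\ref{prop_uft_D_H2} is available under the dimensional restriction $N\in\{1,2,3\}$ and $s>N/2+1$, and that the interior regularity argument of Proposition~\ref{prop_uft_D_H2} is compatible with the requirement $\overline{\Omega}\Subset\Omega'\Subset\Omega_1$; once this is in place the rest of the proof is just a reference chain.
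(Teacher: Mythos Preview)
Your proposal is correct and follows essentially the same strategy as the paper's proof: verify that the linear-elastic setting satisfies the abstract assumptions (via Proposition~\ref{prop_uft_continuous} and Proposition~\ref{prop_elast_u}) and then invoke the general convergence-rate theorem from Section~\ref{sect_convergence}. The only notable difference is that you appeal to the restricted-noise rates theorem (the one immediately following Proposition~\ref{prop_differentiability}), whereas the paper cites Theorem~\ref{theorem_rates} (the full-noise version); since the statement of Theorem~\ref{thm_rates_elastic} involves only noise in $\mI_1$ and claims both the parameter and residual rates, your choice of reference is in fact the more precise one.
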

\begin{proof}
Due to the above results, in particular Proposition~\ref{prop_uft_continuous} and Proposition~\ref{prop_elast_u}, all assumptions of Theorem~\ref{theorem_rates} are satisfied, which now yields the assertion. 
\end{proof}

\begin{remark}
The above result essentially states that in our linear elastic setting, the IIM is an order optimal regularization method when searching for sufficiently regular Lam\'e parameters $\lambda,\mu$, as long as all the involved quantities are sufficiently ``reasonable''. Note that the restriction $s > N/2 + 1$ instead of $s > N/2$, the increased regularity of $\ff$, $\gfD$, and $\Phi$ in Theorem~\ref{thm_rates_elastic} compared to Assumption~\ref{ass_linear_elastic}, and the existence of the intermediate domain $\Omega'$, are only required for establishing the $L_\infty$-estimate \eqref{Lipschitz_u_infty} via the interior regularity of $\uft'(\lambda,\mu)$. However, depending on the concrete setting of the BVP \eqref{linear_elastic}, the estimate \eqref{Lipschitz_u_infty} and thus Theorem~\ref{thm_rates_elastic} may be obtained under weaker assumptions as well.

For example, a close inspection of the proofs of the interior regularity of $\uf(\lambda,\mu)$ and $\uf'(\lambda,\mu)$ shows that $\Phi$ only has to be locally $H^2$ regular, and thus the increased regularity on $\gfD$ can in fact be dropped. The same is true for the Lam\'e parameters $(\lambda,\mu)$, which technically only have to be in $H^s(\Omega'')$ with $s > N/2 + 1$ on a domain $\Omega'' \subset \Omega_1$ satisfying $\overline{\Omega'} \Subset \Omega''$ and $\overline{\Omega''} \Subset \Omega_1$, while outside $\Omega''$ it is sufficient that $s > N/2$ as before. While this could in fact be accommodated for by redefining the admissible set $\Msm$, we have refrained from doing so in this paper to avoid an overly technical exposition.

Furthermore, in certain settings of the BVP \eqref{linear_elastic}, it is possible to obtain full $H^2(\Omega_1)$ regularity for $\uf(\lambda,\mu)$ and thus also for $\uf'(\lambda,\mu)$. In this case, the existence of the intermediary domain $\Omega'$ may be dropped entirely in the above analysis. One example is the case of pure Dirichlet boundary conditions, i.e., $\Gamma_D = \partial \Omega$ and $\Gamma_T = \emptyset$, for which $H^2(\Omega_1)$ regularity follows if $g_D \in H^{\frac{3}{2}}(\Omega_1)$ and $\partial \Omega_1 \in C^{1,1}$; cf.\ \cite[Theorem~4.18]{McLean_2000}. However, for mixed boundary conditions, full $H^2(\Omega_1)$ regularity can typically not be obtained except in very specific circumstances. Nevertheless, if $\Omega$ ``touches'' $\partial \Omega_1$ only on the Dirichlet boundary $\Gamma_D$ sufficiently far away from $\Gamma_T$, then due to \cite[Theorem~4.18]{McLean_2000} one still has $H^2(\Omega)$ regularity assuming $g_D \in H^{\frac{3}{2}}(\Gamma_D)$ and $\partial \Gamma_D \in C^{1,1}$, which is sufficient for establishing \eqref{Lipschitz_u_infty}.
\end{remark}

Finally, we consider the IIM for the full noise case in our linear elastic setting. In particular, we assume that $\ff$, $\gfD$, and $\gfT$ are contaminated by noise, and thus $\uf^\delta$ satisfies the noisy BVP 
    \begin{equation}\label{linear_elastic_noisy}
    \begin{alignedat}{3}
        -\operatorname{div}{\sigma_{\lambda,\mu}(\uf^\delta)} & = \ff^\delta \,, \qquad &&\text{in} \quad \Omega_1 \,, 
        \\ 
        \uf^\delta & = \gfD^\delta \,, \qquad &&\text{on} \quad \Gamma_D  \,,
        \\
        \sigma_{\lambda,\mu}(\uf^\delta) \nv &= \gfT^\delta \,,  \qquad &&\text{on} \quad \Gamma_T \,.
    \end{alignedat} 
    \end{equation}
This situation appears in practice if the applied forces in an elastography experiment are only known (measured) up to a certain accuracy. Concerning the noise bound \eqref{cond_noise_model}, we have the following result.

\begin{proposition}
Let $\ff^\delta \in H^{-1}(\Omega_1)^{N}$, $\gfD^\delta \in H^{\frac{1}{2}}(\Gamma_D)^N$, $\gfT^\delta \in H^{-\frac{1}{2}}(\Gamma_T)^N$, and assume that there is a function $\Phi^\delta \in H^1(\Omega_1)^N$ such that $\Phi^\delta \vert_{\Gamma_D} = \gfD^\delta$. Furthermore, let $\uf$ and $\uf^\delta$ denote the (unique) weak solutions of the BVPs \eqref{linear_elastic} and \eqref{linear_elastic_noisy}, respectively. Then there exists a constant $C > 0$ such that
    \begin{equation}\label{uft_bound_nois}
        \norm{\uf-\uf^\delta}_{H^1(\Omega_1)} 
        \leq
        C \kl{ \norm{\ff-\ff^\delta}_{H^{-1}(\Omega_1)} + \norm{\gfT-\gfT^\delta}_{H^{-\frac{1}{2}}(\Gamma_T)} 
        + \norm{(\lambda,\mu)}_X \norm{\Phi-\Phi^\delta}_{H^1(\Omega_1)} } \,.
    \end{equation}
\end{proposition}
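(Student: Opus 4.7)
The plan is to exploit the linearity of the elasticity operator: since both $\uf$ and $\uf^\delta$ solve a problem of the form \eqref{linear_elastic} with the \emph{same} Lam\'e parameters $(\lambda,\mu)$ but different data, their difference $\wf := \uf - \uf^\delta$ again satisfies a boundary value problem of this exact form, only with the data replaced by the differences. I can then apply the a priori bound from Lemma~\ref{lem_existence} to $\wf$ directly.

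More concretely, I would first observe that $\wf \in H^1(\Omega_1)^N$ is the unique weak solution of
    \begin{equation*}
    \begin{alignedat}{3}
        -\operatorname{div}{\sigma_{\lambda,\mu}(\wf)} & = \ff-\ff^\delta \,, \quad &&\text{in } \Omega_1 \,, \\
        \wf & = \gfD - \gfD^\delta \,, \quad &&\text{on } \Gamma_D \,, \\
        \sigma_{\lambda,\mu}(\wf)\,\nv & = \gfT - \gfT^\delta \,, \quad &&\text{on } \Gamma_T \,,
    \end{alignedat}
    \end{equation*}
which is immediate by subtracting the two weak formulations. A valid lifting of the Dirichlet datum is $\Psi := \Phi - \Phi^\delta \in H^1(\Omega_1)^N$, since by hypothesis $\Psi\vert_{\Gamma_D} = \gfD - \gfD^\delta$. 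Hence the homogenized variable $\tilde{\wf} := \wf - \Psi$ belongs to $V$ and solves the associated homogenized variational problem \eqref{a=l} with data $(\ff-\ff^\delta, \gfT-\gfT^\delta, \Psi)$ in place of $(\ff,\gfT,\Phi)$.

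Applying Lemma~\ref{lem_existence} to $\tilde{\wf}$ yields a constant $C>0$ (depending only on $\mub$, $s$, and $\Omega_1$, not on the noise) with
    \begin{equation*}
        \norm{\tilde{\wf}}_{H^1(\Omega_1)}
        \leq C \Big( \norm{\ff-\ff^\delta}_{H^{-1}(\Omega_1)} + \norm{\gfT-\gfT^\delta}_{H^{-\frac{1}{2}}(\Gamma_T)} + \norm{(\lambda,\mu)}_X \norm{\Phi-\Phi^\delta}_{H^1(\Omega_1)} \Big) \,.
    \end{equation*}
The assertion then follows by the triangle inequality $\norm{\wf}_{H^1(\Omega_1)} \leq \norm{\tilde{\wf}}_{H^1(\Omega_1)} + \norm{\Psi}_{H^1(\Omega_1)}$, absorbing $\norm{\Phi-\Phi^\delta}_{H^1(\Omega_1)}$ into the last term on the right-hand side (possibly enlarging the constant $C$, and using that $\norm{(\lambda,\mu)}_X$ is bounded below by a positive constant on $\Msm$, or simply keeping $1$ as an additive term inside the parenthesis).

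I do not expect a real obstacle here: the argument is just linearity plus the already-established a priori estimate. The only mildly delicate point is making sure that $\Phi - \Phi^\delta$ is admissible as a lifting for the difference problem, which is automatic from the trace assumptions on $\Phi$ and $\Phi^\delta$. Everything else is a direct invocation of Lemma~\ref{lem_existence}.
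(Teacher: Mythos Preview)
Your proposal is correct and follows essentially the same route as the paper: both arguments observe that the difference solves a linear elasticity problem with the same coefficients $(\lambda,\mu)$ and data given by the differences, then invoke the a priori estimate of Lemma~\ref{lem_existence}. Indeed, your homogenized variable $\tilde{\wf} = (\uf-\uf^\delta) - (\Phi-\Phi^\delta)$ coincides exactly with the paper's $\zf = \uft - \uft^\delta$, and your final triangle-inequality step (absorbing $\norm{\Phi-\Phi^\delta}_{H^1}$ via the lower bound $\norm{(\lambda,\mu)}_X \geq \mub\,\abs{\Omega_1}^{1/2}$ on $\Msm$) is handled more carefully than in the paper itself.
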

\begin{proof}
First, note that $\uf = \uft + \Phi$ and $\uf^\delta = \uft^\delta + \Phi^\delta$, where $\uft \in V$ is the unique solution of the variational problem \eqref{a=l} and $\uft^\delta \in V$ is the unique weak solution of the variational problem
    \begin{equation*}
        a_{\lambda,\mu}(\uft,\vf) = l^\delta(\vf) - a_{\lambda,\mu}(\Phi^\delta,\vf) \,,  
        \qquad
        \forall \, \vf \in V \,,
    \end{equation*}
where
    \begin{equation*}
        l^\delta(\vf) := \spr{\ff^\delta,\vf}_{H^{-1}(\Omega_1),H^1(\Omega_1)} + \spr{\gfT^\delta,\vf}_{H^{-\frac{1}{2}}(\Gamma_T),H^{\frac{1}{2}}(\Gamma_T)} \,.
    \end{equation*}
Hence, the difference $\zf := (\uft - \uft^\delta) \in V$ is the unique solution of 
    \begin{equation*}
        a_{\lambda,\mu}(\zf,\vf) = \kl{l(\zf) - l^\delta(\zf)} - a_{\lambda,\mu}(\Phi-\Phi^\delta,\vf) \,,  
        \qquad
        \forall \, \vf \in V \,,
    \end{equation*}
and thus it follows as in Lemma~\ref{lem_existence} that there exists a constant $C> 0$ such that
    \begin{equation*}
        \norm{\uft-\uft^\delta}_{H^1(\Omega_1)} 
        \leq
        C \kl{ \norm{\ff-\ff^\delta}_{H^{-1}(\Omega_1)} + \norm{\gfT-\gfT^\delta}_{H^{-\frac{1}{2}}(\Gamma_T)} 
        + \norm{(\lambda,\mu)}_X \norm{\Phi-\Phi^\delta}_{H^1(\Omega_1)} } \,,
    \end{equation*}
which together with $(\uft - \uft^\delta) = (\uf - \uf^\delta)$ now yields the assertion.
\end{proof}

Now in order to establish the noise bound \eqref{cond_noise_model}, assume that there exist $\delta_{G,1},\delta_{G,2},\delta_{G,3}$ such that
    \begin{equation*}
        \norm{\ff-\ff^\delta}_{H^{-1}(\Omega_1)} \leq \delta_{G,1} \,,
        \qquad
        \norm{\gfT-\gfT^\delta}_{H^{-\frac{1}{2}}(\Gamma_T)} \leq \delta_{G,2} \,,
        \qquad
        \norm{\Phi-\Phi^\delta}_{H^1(\Omega_1)} \leq \delta_{G,3} \,.
    \end{equation*}
Then together with \eqref{uft_bound_nois} we obtain
    \begin{equation}
        \norm{\uf-\uf^\delta}_{H^1(\Omega_1)} 
        \leq
        C \kl{ \delta_{G,1} + \delta_{G,2}
        + \norm{(\lambda,\mu)}_X \delta_{G,3} } \,.
    \end{equation}
Hence, for a fixed $(\lambda^*,\mu^*) \in \Msm$ we obtain
    \begin{equation*}
        \sup_{(\lambda,\mu) \in \Msm \cap B_\eps(\lambda^*,\mu^*)} \norm{\uf-\uf^\delta}_{H^1(\Omega_1)} 
        \leq
        C \kl{ \delta_{G,1} + \delta_{G,2}
        + \kl{\norm{(\lambda^*,\mu^*)}_X + \eps} \delta_{G,3} } =: \delta_G   \,, 
    \end{equation*}
which established the noise bound \eqref{cond_noise_model}. With this, we can now transfer the convergence and convergence rates results from Section~\ref{sect_convergence} to our linear elastic setting as well. In particular, assume that $\mI_2^\delta = \mI_2 + \delta_{\mI_2} \xi$ with $\norm{\xi}_{L_\infty(\Omega_2)} < \infty$ and that there is a $\bar{\delta} > 0$ such that for all $\delta \leq \bar{\delta}$ there holds
    \begin{equation*}
        \forall\, (\lambda,\mu) \in \Msm \cap B_\eps((\lambda^*,\mu^*)) \,, \forall \x \in \Omega \, : \, G^\delta(\lambda,\mu)(\x) \in \Omega_2 \,.
    \end{equation*}
Then under the assumptions of Theorem~\ref{theorem_convergence_elastic} and Theorem~\ref{thm_rates_elastic}, the convergence and convergence rate results of Theorem~\ref{theorem_convergence} and Theorem~\ref{theorem_rates} transfer to our considered linear elastic setting, and we obtain that the IIM is a convergent and order optimal regularization method also in the full noise case.

\section{Numerical Experiments}\label{sect_numerics}

In this section, we consider the numerical application of our IIM to the quasi-static linear elastography problem as analyzed in Section~\ref{sect_appl_lin} on several examples with experimentally motivated simulated data. The considered setting is only one of many possible instances of elastography problems to which our general IIM approach can be applied. As noted above, our particular choice was motivated by the OCE experiments conducted in \cite{Krainz_Sherina_Hubmer_Liu_Drexler_Scherzer_2022}, in which the considered sample has several inclusions whose shape and location can be identified in advance, and where the Lam\'e parameters are piecewise constant. While we have shown above that the IIM can also be applied to much more general settings (non-constant parameters, unknown inclusion locations, general material models), the numerical setting considered below should be sufficient to numerically demonstrate the theoretical findings of this paper.

\subsection{Problem setting, discretization, and implementation}
\label{sect_problem_setting}

For the numerical experiments presented below, we consider a rectangular material sample ($N=2$) which is fixed from below and compressed by a certain distance $c_D$ from above, while left free to expand on the sides. This setting is motivated by the quasi-static OCE experiments conducted in \cite{Krainz_Sherina_Hubmer_Liu_Drexler_Scherzer_2022}, which due to the symmetry and physical properties of the considered silicone samples reduce to just such a 2D elastography setting. Now, assuming a linear elastic material, the BVP \eqref{linear_elastic} for the displacement field $\uf$ becomes
    \begin{equation}\label{linear_elastic_numerics}
    \begin{alignedat}{3}
        -\operatorname{div}{\sigma_{\lambda,\mu}(\uf)} & = 0 \,, \qquad &&\text{in} \quad \Omega_1 := (0,l_{x_1}) \times (0,l_{x_2}) \,, 
        \\ 
        \uf & = 0 \,, \qquad &&\text{on} \quad \Gamma_{D,1} := [0,l_{x_1}] \times \Kl{0}  \,,
        \\ 
        \uf & = (0,-c_D) \,, \qquad &&\text{on} \quad \Gamma_{D,2} := [0,l_{x_1}] \times \Kl{l_{x_1}}  \,,
        \\
        \sigma_{\lambda,\mu}(\uf) \nv &= 0 \,,  \qquad &&\text{on} \quad \Gamma_T := \Kl{0,l_{x_1}} \times [0,l_{x_2}] \,,
    \end{alignedat} 
    \end{equation}
where $\Gamma_D = \Gamma_{D,1} \cup \Gamma_{D,2}$. Note that in this case, one can simply choose $\Phi(x_1,x_2) = (0,-(c_D x_2)/l_{x_2})$ for the homogenization function in Assumption~\ref{ass_linear_elastic}. Again motivated by \cite{Krainz_Sherina_Hubmer_Liu_Drexler_Scherzer_2022}, we choose $l_{x_1} = 6.8\mm$ and $l_{x_2} = 2.9\mm$ for the sample dimensions, and $c_D = 0.267\mm$ for the applied compression. Furthermore, we assume that the sample itself consists of several inclusions embedded within a homogeneous background material, each with different (constant) material parameters. Mathematically, this implies
    \begin{equation}
        \lambda(\x) = \sum_{k=1}^K \lambda_k \chi_{D_k}(\x) \,, 
        \qquad \text{and} \qquad
        \mu(\x) = \sum_{k=1}^K \mu_k \chi_{D_k}(\x)
        \,,
    \end{equation}
where $0 < \lambda_k,\mu_k \in \R$ and the domains $D_k \subseteq \Omega_1$ correspond to the inclusions and background of the sample. Here, we assume that the $D_k$ are given, which is e.g.\ the case in the OCE experiments presented in \cite{Krainz_Sherina_Hubmer_Liu_Drexler_Scherzer_2022}, where they are directly estimated from the measured OCT image of the undeformed sample. Hence, we only have to estimate the constant Lam\'e parameters $\lambda_k,\mu_k$ for $k \in \Kl{1,\dots,K}$.

\begin{figure}[ht!]
    \centering
    \includegraphics[width=0.39\textwidth, clip=true, trim={50pt 150pt 50pt 150pt}]{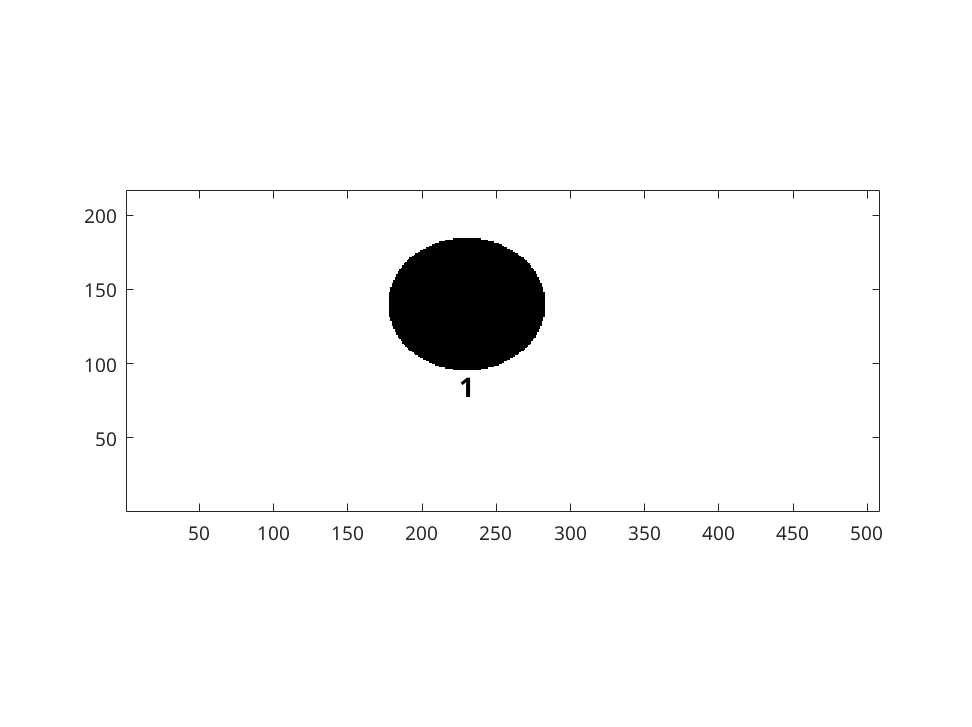}
    \includegraphics[width=0.4\textwidth, clip=true, trim={50pt 150pt 50pt 150pt}]{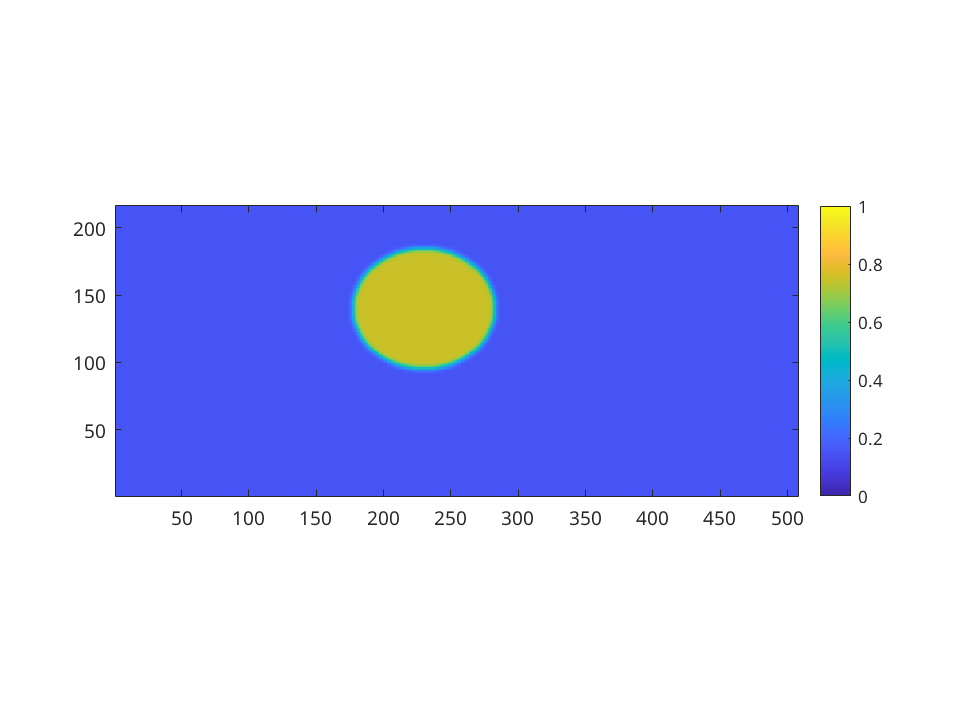}
    \\
    \includegraphics[width=0.39\textwidth, clip=true, trim={50pt 150pt 50pt 150pt}]{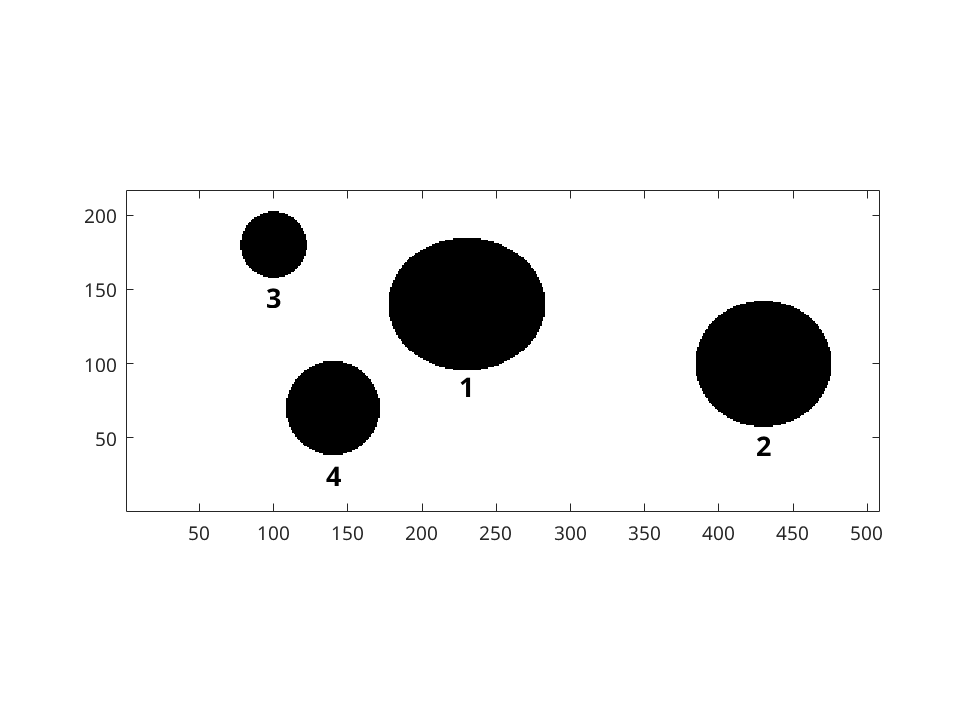}
    \includegraphics[width=0.4\textwidth, clip=true, trim={50pt 150pt 50pt 150pt}]{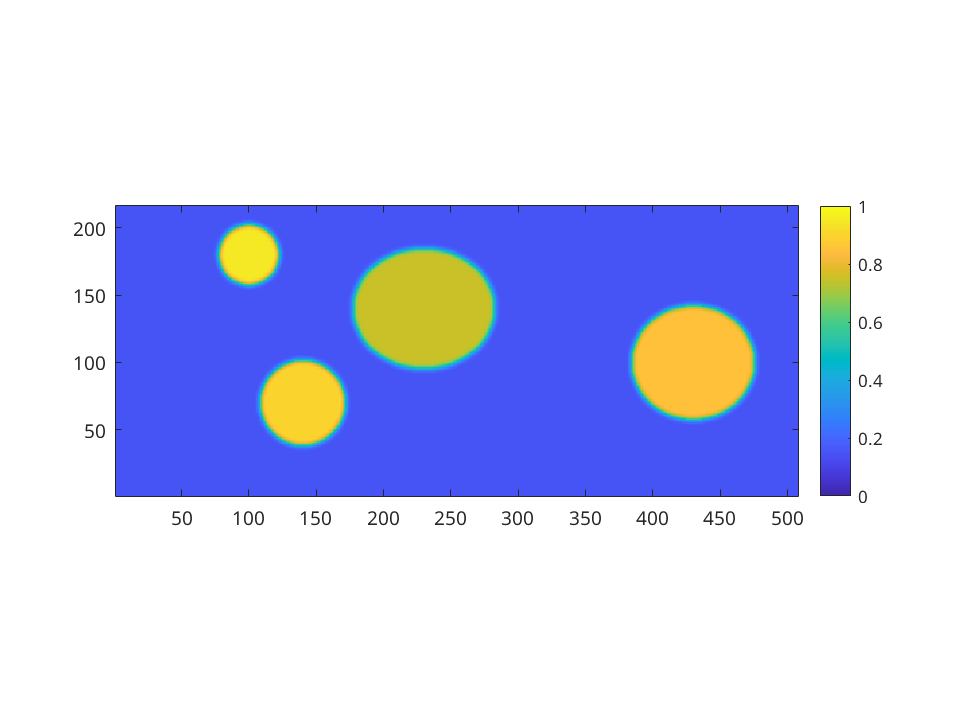}
    \caption{Schematic depiction of simulated material sample structure (left) and synthetic OCT images before the application of artificial speckle (right). Inclusion numbering corresponding to Table~\ref{tab_ground_truth}.}
    \label{fig_samples_setup}
\end{figure}

\begin{table}[ht!]
    \centering
    \resizebox{0.45\textwidth}{!}{
    \begin{tabular}{c|c|c|c|c|}
        & \multicolumn{4}{c|}{\textbf{Ground truth}} \\
        \cline{2-5}
        Entity & $E$, kPa & $\nu$ & $\lambda$, kPa & $\mu$, kPa\\
        \hline
        Background & 100 & 0.45 & 310.3448 & 34.4828\\
        Inclusion 1 & 200 & 0.45 & 620.6897 & 68.9655\\
        Inclusion 2 & 50 & 0.45 & 155.1724 & 17.2414\\
        Inclusion 3 & 75 & 0.45 & 232.7586 & 25.8621\\
        Inclusion 4 & 150 & 0.45 & 465.5172 & 51.7241
    \end{tabular}
    }
    \caption{Ground-truth values of the Lam\'e parameters $\lambda,\mu$ and the corresponding Young's modulus $E$ and Poisson ratio $\nu$ for the simulated material sample schematically depicted in Figure~\ref{fig_samples_setup}.}
    \label{tab_ground_truth}
\end{table}

For our tests, we consider two different sample configurations, one with a single inclusion, and one with four inclusions of varying sizes, which are schematically depicted in Figure~\ref{fig_samples_setup}~(left). The corresponding ground truth values of the Lam\'e parameters $\lambda,\mu$ are summarized in Table~\ref{tab_ground_truth}, together with the converted values of Young's modulus $E$ and the Poisson ratio $\nu$, which are connected via
    \begin{equation*}
        E = \frac{\mu(3\lambda + 2\mu)}{\lambda + \mu} \,,
        \qquad \text{and} \qquad
        \nu = \frac{\lambda}{2(\lambda + \mu)} \,.
    \end{equation*}
In order to create the synthetic image $\mI_1$, the domain $\Omega_1$ is first subdivided into a uniform $508 \times 216$ pixel grid, on which a constant background value of $0.15$ is chosen. Then, the different inclusions with brightness values between $0.75$ and $0.95$ are added to this background, followed by a slight filtering of the image mimicking the smoothness induced by OCT image acquisition; see Figure~\ref{fig_samples_setup}~(right). Finally, a random speckle pattern is added to the image, and the resulting image is rescaled into $[0,1]$, yielding the noise-free synthetic OCT image $\mI_1$; see Figure~\ref{fig_samples}~(left). 

\begin{figure}
    \centering
    \includegraphics[width=0.4\textwidth, clip=true, trim={50pt 180pt 50pt 180pt}]{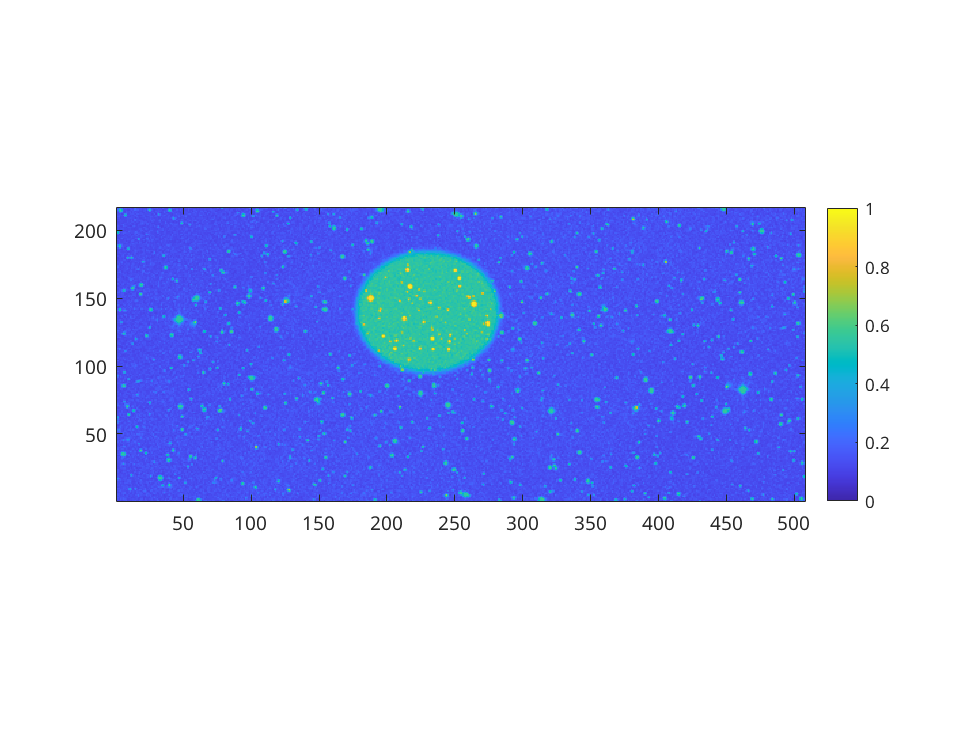}
    \includegraphics[width=0.4\textwidth, clip=true, trim={50pt 180pt 50pt 180pt}]{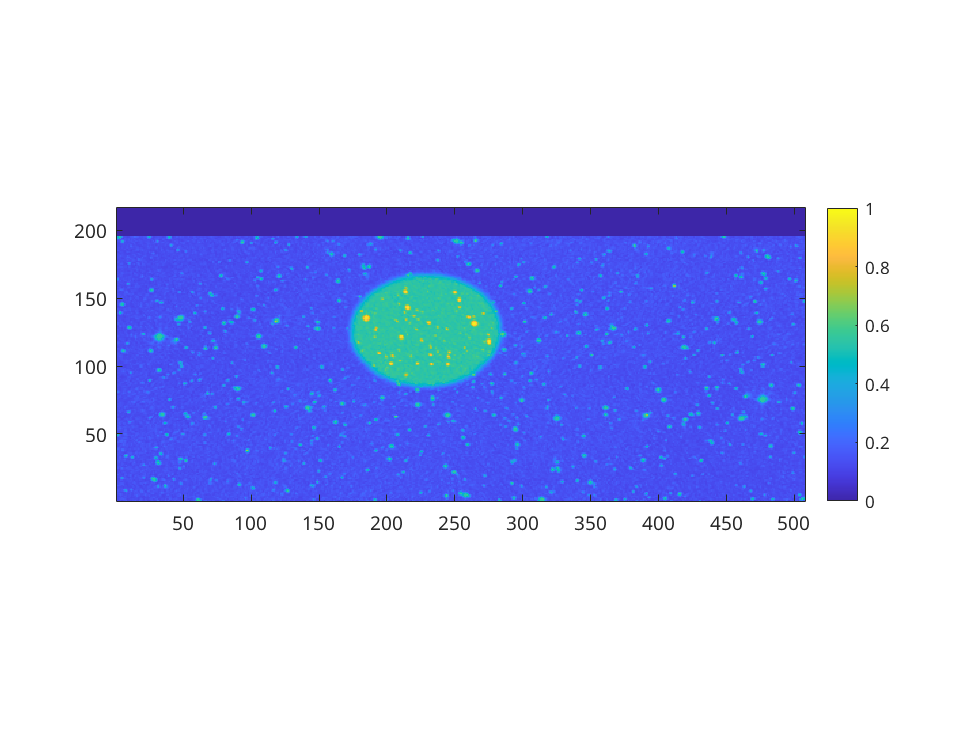}
    \includegraphics[width=0.4\textwidth, clip=true, trim={50pt 180pt 50pt 180pt}]{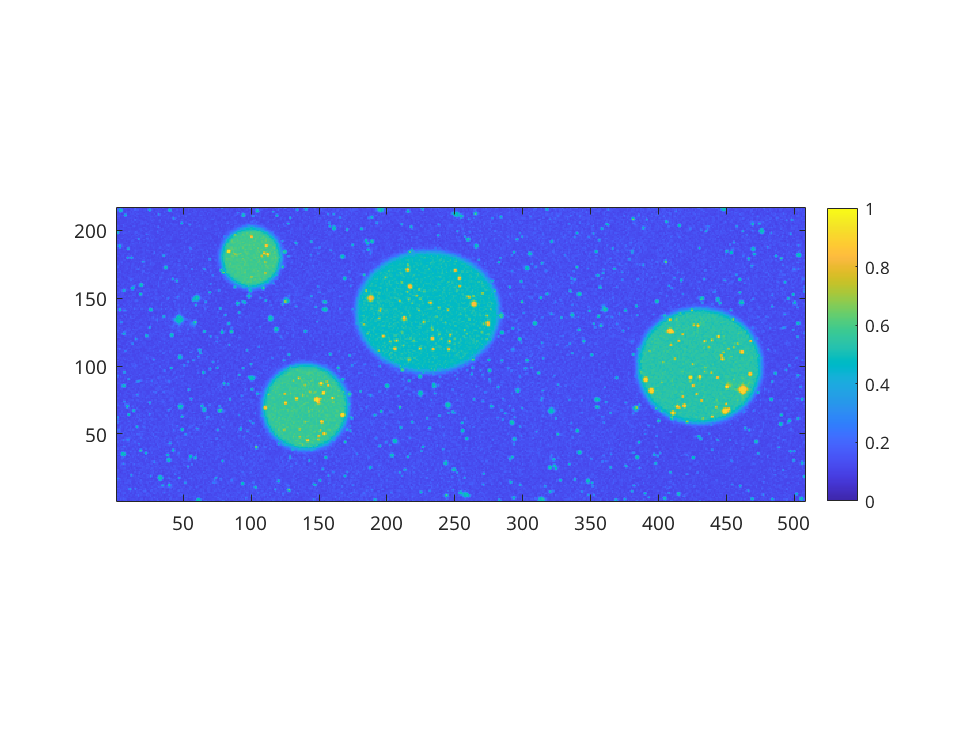}
    \includegraphics[width=0.4\textwidth, clip=true, trim={50pt 180pt 50pt 180pt}]{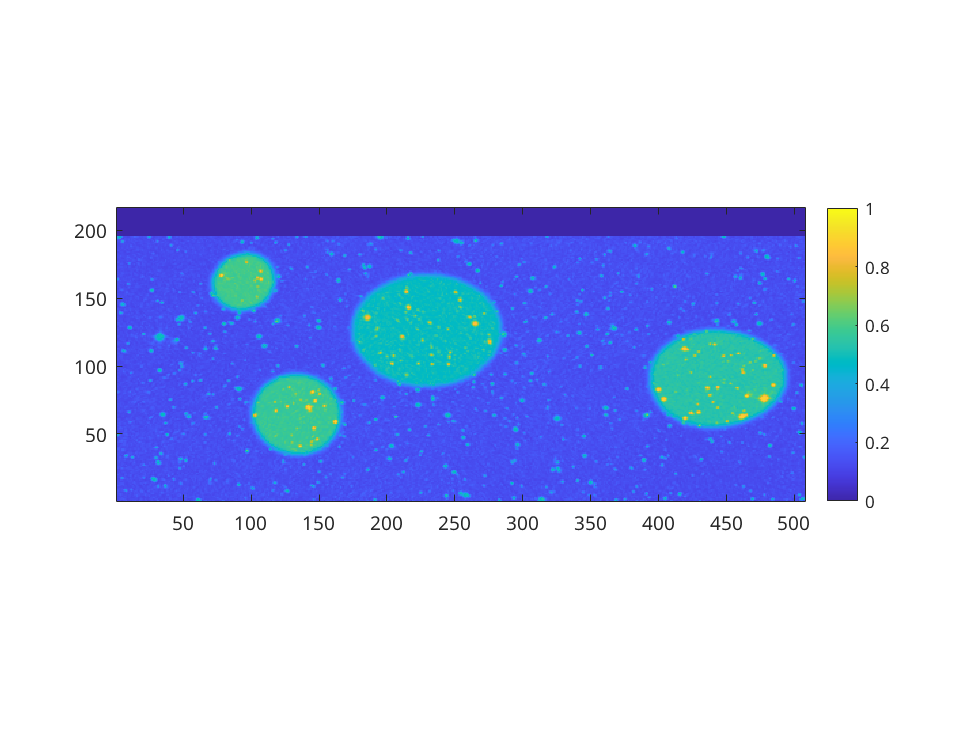}
    \caption{Synthetic OCT images (including speckle) before (left) and after compression (right), corresponding to $\mI_1$ and $\mI_2$.}
    \label{fig_samples}
\end{figure}

In order to numerically solve the BVP \eqref{linear_elastic_numerics}, and to compute the compressed image $\mI_2$, we use the finite element Python library FEniCSx \cite{FenicsX}. First, the domain $\Omega_1$ is sudivided into a uniform triangular mesh, such that each of the $508 \times 216$ pixels is subdivided into exactly $2$ right triangles. Then, using interpolation, the synthetic image $\mI_1$ is converted into a FEM function on this triangular mesh. Next, using the ground truth values of $\lambda,\mu$, the BVP \eqref{linear_elastic_numerics} is solved for $\uf$ using a continuous Galerkin method of degree $1$ (piecewise linear elements). Note that in our pixel discretization, the applied displacement $c_D = 0.267\mm$ corresponds to a displacement of exactly $20$ pixels. In order to create the compressed image $\mI_2$, the computed displacement field $\uf$ is used to deform the mesh underlying $\mI_1$, thereby effectively deforming the image itself. The resulting image $\mI_2$ is then interpolated back onto the non-deformed mesh, thereby cutting off the outwards-bulging sides of the sample. For our considered synthetic samples, the resulting deformed images are depicted in Figure~\ref{fig_samples}~(right). 

Note that real OCT scans do not (necessarily) have a consistent physical unit, but are mainly qualitative intensity images in an arbitrary value range, which are typically rescaled for further processing. This is reflected by our rescaling of the synthetic OCT scans, which are thus unitless intensity images.

In order to estimate the Lam\'e parameters $\lambda_k,\mu_k$ corresponding to the inclusions and background via the IIM from the images $\mI_1$ and $\mI_2$, we follow Section~\ref{subsect_piecewise}, and define the adapted IIM functional
    \begin{equation}\label{IIM_actual}
        \mathcal{T}_{\alpha,K}\kl{(\lambda_k, \mu_k)_{k=1}^K} :=  \norm{\mI_2 \circ G\kl{\sum_{k=1}^K (\lambda_k,\mu_k) \chi_{D_k}}- \mI_1}^2_{L_2(\Omega)} + \alpha \norm{\sum_{k=1}^K (\lambda_k,\mu_k) \chi_{D_k}}_{L_2(\Omega_1)}^2\,. 
    \end{equation}
Here, we use $\Omega = \Omega_1$, which is possible in our implementation based on grid-deformation. The minimization of $\mathcal{T}_{\alpha,K}$ is restricted to $10$--$1000$kPA for both $\lambda,\mu$, corresponding to the admissible set
    \begin{equation*}
        \mathcal{M} := \Kl{(\lambda_k, \mu_k)_{k=1}^K \in \R^K \, \big\vert \, 10 \leq \lambda_k,\mu_k \leq 1000 \,\,\, \forall \, k = \Kl{1,\dots,K}} \,.
    \end{equation*}
For solving the resulting finite-dimensional optimization problem, we use the SciPy implementation of the Nelder-Mead algorithm\cite{Bonnans_Gilbert_Lemarechal_Sagastizabal_2006} with an initial guess $(\lambda_0,\mu_0)$ corresponding to $(E_0,\nu_0) = (150,0.45)$, i.e., the values of inclusion~4. This derivative-free method was specifically chosen to avoid differentiation of the images $\mI_1, \mI_2$, and due to its ability to avoid local minima. For stopping the iteration, the built in tolerance was used, although as discussed in Section~\ref{subsect_numerics_noise}, other stopping rules may be beneficial. Finally, note that in all the numerical experiments, relative errors are measured in the $L_2(\Omega_1)$-norm.

Concerning computational costs, note that since in the setting of the considered numerical examples our proposed IIM approach estimates the material parameters via solving the minimization problem \eqref{IIM_actual}, the optimization algorithm chosen for this task directly determines the overall computational cost. Similarly, due to the wide array of different two-step approaches, and depending on the methods chosen to implement their two steps, it is difficult to provide a meaningful and comprehensive comparison of their overall computational costs in relation to the IIM. However, note that in most cases, the repeated computation of the displacement field $\uf(\af)$ for different material parameters $\af$ required in these approaches is typically the numerically most expensive part. For example, the idealized two-step NLI approach introduced in Section~\ref{subsect_NLI} requires three such function evaluation per iteration, while the Nelder-Mead algorithm used in our implementation of the IIM typically requires only two evaluations per iteration \cite{Bonnans_Gilbert_Lemarechal_Sagastizabal_2006}. However, different implementations may require different numbers of function evaluations, and the chosen methods may also require different numbers of iterations. Finally, while the first step in two-step methods does not require any additional evaluations of $\uf(\af)$, it may still involve a number of computationally expensive calculations depending on the chosen approach.

\subsection{Numerical results: noise-free images}

\begin{table}[ht!]
    \centering
    \resizebox{0.8\textwidth}{!}{
    \begin{tabular}{c|c|c|c|c|c|c|}
        & \textbf{Test1} & \textbf{Test2} & \multicolumn{2}{c|}{\textbf{Test3}} & \multicolumn{2}{c|}{\textbf{Test4}} \\
        \hline
        Entity & $\mu$, kPa & $\mu$, kPa & $\lambda$, kPa & $\mu$, kPa & $\lambda$, kPa & $\mu$, kPa\\
        \hline
        Background & 34.4340 & 33.9581 & 308.0995 & 34.2053 & 364.3680 & 40.2402 \\
        Inclusion 1 & 68.8290 & 67.6933 & 621.1630 & 68.3490 & 567.4982 & 77.3712 \\
        Inclusion 2 & & 15.7453 & & & 336.2634 & 20.8033 \\
        Inclusion 3 & & 33.4293 & & & 508.9541 & 54.3080 \\
        Inclusion 4 & & 68.0705 & & & 641.7964 & 60.4081 \\
        \hline
        Relative error in $\lambda$ and $\mu$ (\%) & 0.0002 & 0.6184 & 0.0041 & 0.0069 & 4.8830 & 2.8301 \\
        \hline
        Relative error in $(\lambda,\mu)$ (\%) &  & & \multicolumn{2}{c|}{0.0080} & \multicolumn{2}{c|}{5.6439}\\
        \hline
    \end{tabular}} 
    \caption{Numerical results with noise-free images $\mI_1$ and $\mI_2$. Test1 and Test3 consider the synthetic OCT sample with a single inclusion, cf.~Figure~\ref{fig_samples}~(top), while Test2 and Test4 consider the sample with four inclusions; cf.~Figure~\ref{fig_samples}~(bottom). As indicated, in Test1 and Test2 only $\mu$ is reconstructed (with $\lambda$ fixed to the ground truth), while in Test3 and Test4 both Lam\'e parameters $\lambda,\mu$ are unknown.}
    \label{tab:noise-free}
\end{table}

For our first series of tests, we consider the case of noise-free images $\mI_1$ and $\mI_2$. Here, no regularization (or only a very small amount) is necessary, and thus we choose $\alpha = 0$ in the IIM functional \eqref{IIM_actual}. In total, we consider four tests, two on the synthetic OCT sample with a single inclusion (Test1 and Test3) and two on the sample with four inclusions (Test2 and Test4); cf.~Figure~\ref{fig_samples}. For each of them, we consider both the reconstruction of $\mu$ with known and fixed values of $\lambda$ (Test1 and Test2), as well as the combined reconstruction of $\lambda,\mu$ (Test3 and Test4). The results are summarized in Table~\ref{tab:noise-free}.

First, note that in the case of a single inclusion with a known $\lambda$, the values of $\mu$ can be reconstructed accurately for both the background and the inclusion up to a negligible error (Test1). In the case of four inclusions, but still with a known $\lambda$, the accuracy decreases, but remains very good overall, with a total relative error of $0.62 \%$ (Test2). Note in particular that the values of $\mu$ for the background and the two large inclusions (1 and 2) are more accurately reconstructed than those for the two smaller inclusions (3 and 4). The largest individual reconstruction error is obtained for inclusion 4, likely because it is located furthest away from the compression boundary; cf.~Figure~\ref{fig_samples_setup}. Next, consider the case of a single inclusion with both $\lambda,\mu$ unknown (Test3). Even though from a theoretical point of view the recovery of both $\lambda,\mu$ simultaneously may exhibit issues due to non-uniqueness \cite{Barbone_Gokhale_2004}, we here obtain a fairly accurate reconstruction with a total relative error of only $0.008\%$. However, in the case of four inclusions, these non-uniqueness issues become more severe (Test4). While the reconstructed values of $\mu$ are still reasonably close to the ground truth (except for inclusion~3), the recovered values of $\lambda$ for inclusions~2 to 5 exhibit major inaccuracies. The large errors in both $\lambda,\mu$ for inclusion~3 are probably due to its small size compared to the much larger inclusion~1 in its immediate vicinity. Nevertheless, despite the non-uniqueness issues inherent in the simultaneous estimation of $\lambda,\mu$ from a single-compression quasi-static elastography experiment \cite{Barbone_Gokhale_2004}, the total error of $5.64\%$ demonstrates that the IIM is able to extract valuable material parameter information even in this under-determined~case.

\subsection{Numerical results: noisy images}\label{subsect_numerics_noise}

\begin{table}[h]
   \centering
   \resizebox{0.69\textwidth}{!}{\begin{tabular}{c|c|c|c|c|c|c|c|}
   \textbf{Noise }& \multicolumn{2}{c|}{\textbf{Background}} & \multicolumn{2}{c|}{\textbf{Inclusion}} & \multicolumn{3}{c|}{\textbf{Relative error}}\\
   \textbf{level}, \% & $\lambda$, kPa & $\mu$, kPa &$\lambda$, kPa & $\mu$, kPa & $\delta(\lambda)$, \% & $\delta(\mu)$, \% & $\delta(\lambda,\mu)$, \%\\
   \hline
   1 & 308.1845 & 34.0332 & 605.8392 & 68.1528 & 0.0167 & 0.0165 & 0.0237\\
   2 & 316.9569 & 34.8810 & 582.6335 & 67.1193 & 0.1199 & 0.0264 & 0.1228\\
   3 & 320.0652 & 35.4594 & 648.6192 & 66.1607 & 0.1178 & 0.0983 & 0.1533\\
   4 & 315.4122 & 34.8090 & 658.8490 & 63.6868 & 0.1039 & 0.1413 & 0.1755\\
   5 & 315.7042 & 34.8288 & 676.3859 & 62.9689 & 0.1996 & 0.1817 & 0.2699\\
   6 & 316.4830 & 35.1304 & 717.5514 & 58.6866 & 0.5560 & 0.5423 & 0.7767\\
   7 & 270.2898 & 30.2139 & 757.1503 & 47.6295 & 2.5020 & 4.0342 & 4.7471\\
   8 & 310.1156 & 34.4707 & 674.3543 & 52.7214 & 0.1668 & 1.3333 & 1.3437\\
   9 & 354.4916 & 38.8773 & 992.1345 & 57.8832 & 7.7957 & 1.7291 & 7.9851\\
  10 & 248.9686 & 27.7818 & 981.4385 & 39.9672 & 10.8934 & 9.2095 & 14.2647\\    
 \end{tabular}
   }
   \caption{Numerical results with noisy images $\mI_1^\delta$ and $\mI_2^\delta$ for various (relative) noise levels $\delta$. Recovered Lam\'e parameters and corresponding relative errors, each obtained using $100$ Nelder-Mead iterations.}
   \label{tab:noisy-100th-iteration}
\end{table}

\begin{figure}[ht!]
    \centering
    \includegraphics[width=0.45\textwidth]{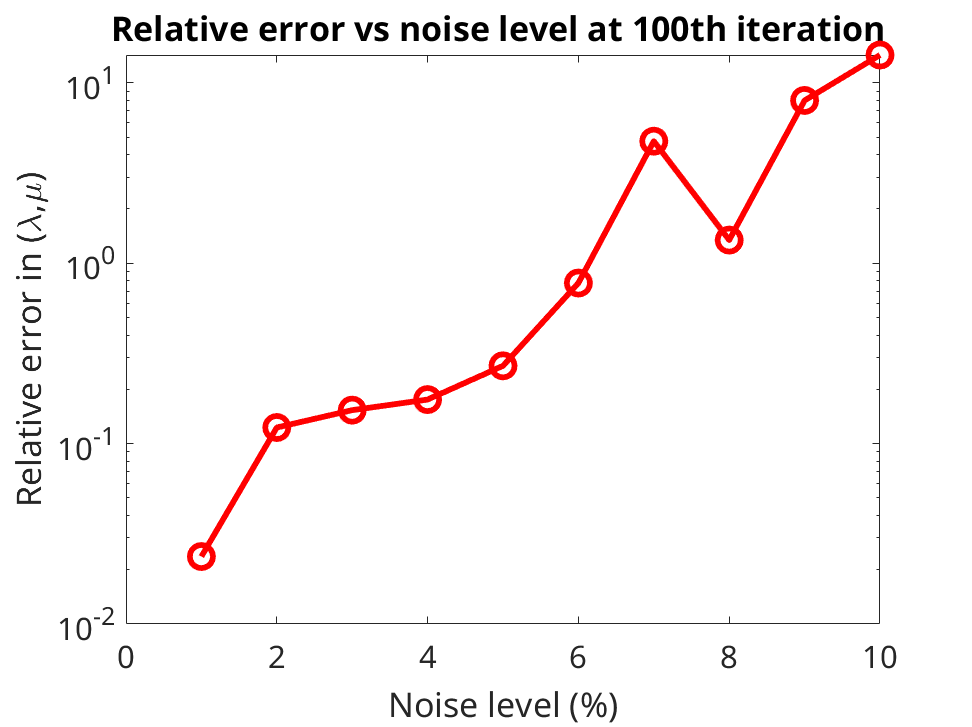} 
    \qquad
    \includegraphics[width=0.45\textwidth]{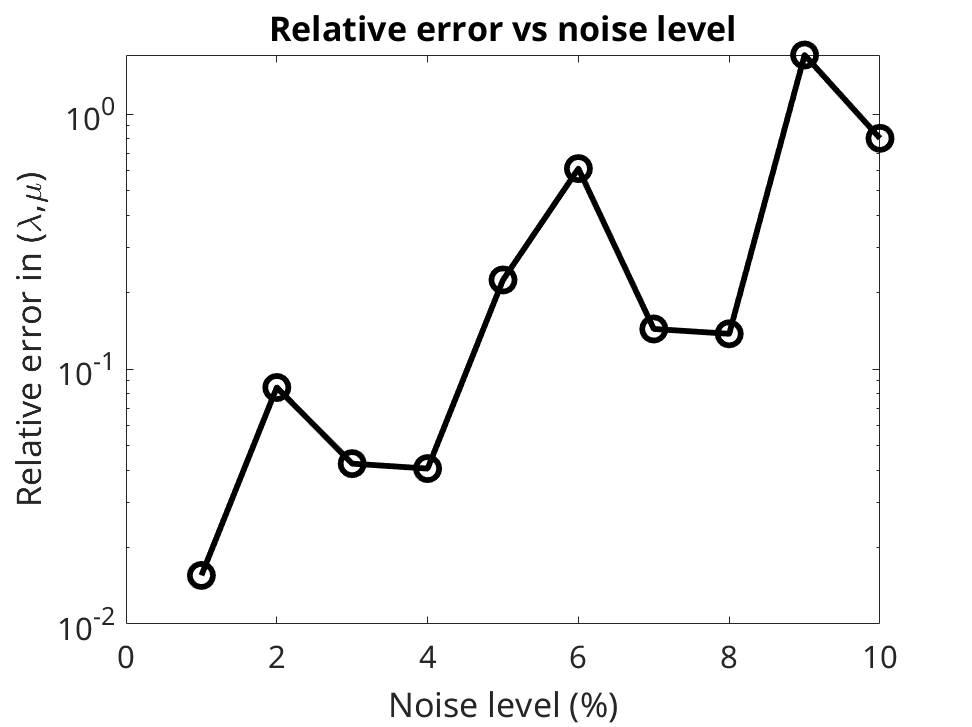}
    \caption{Numerical results with noisy images $\mI_1^\delta$ and $\mI_2^\delta$. Relative error vs noise level obtained after 100 Nelder-Mead iterations (left) and after the optimal number of Nelder-Mead iterations (right).}
    \label{fig:rates}
\end{figure}

For our second series of tests, we consider the case of noisy images $\mI_1^\delta$ and $\mI_2^\delta$ on the example of the synthetic OCT sample with a single inclusion; cf.~Figure~\ref{fig_samples_setup}~(top). To obtain these noisy images, uniformly distributed random noise with different (relative) noise levels $\delta$ is added to the noise-free images $\mI_1$ and $\mI_2$. Then, we use the IIM to reconstruct both Lam\'e parameters $\lambda,\mu$, where in the IIM functional \eqref{IIM_actual} we choose $\alpha = 0.1 \ast \delta$ as the regularization parameter, which is consistent with our developed theory. Note that the value $0.1$ was selected by considering the noise level $\delta = 1\%$ and testing different choices $\alpha = C \ast \delta$ for a wide range of possible constants $C$, with $C = 0.1$ found to be optimal. In practice, different selection criteria such as heuristic parameter choice rules have to be used instead \cite{Engl_Hanke_Neubauer_1996,Leonov_1991,Tikhonov_Glasko_1965,Hanke_Raus_1996,Reginska_1996,Hansen_OLeary_1993,Wahba_1990,Kindermann_2011,Kindermann_2013,Kindermann_Neubauer_2008,Raus_Haemarik_2018,Haemarik_Palm_Raus_2011,Hubmer_Sherina_Kindermann_Raik_2022}.

Table~\ref{tab:noisy-100th-iteration} summarizes the corresponding results, each obtained after $100$ iterations of the Nelder-Mead method. It can be seen that the IIM is able to accurately reconstruct both $\lambda,\mu$, with a relative error below $1\%$ even at a noise level of $6\%$. Furthermore, with the exception of the $7\%$ noise case, the relative errors are monotonically decreasing with decreasing noise level, in alignment with our convergence analysis; cf.~Figure~\ref{fig:rates} (left).

\begin{figure}[ht!]
    \centering    
    \boxed{\includegraphics[width=\linewidth,clip=true,trim={3.2cm 3.2cm 5.2cm 3.5cm}]{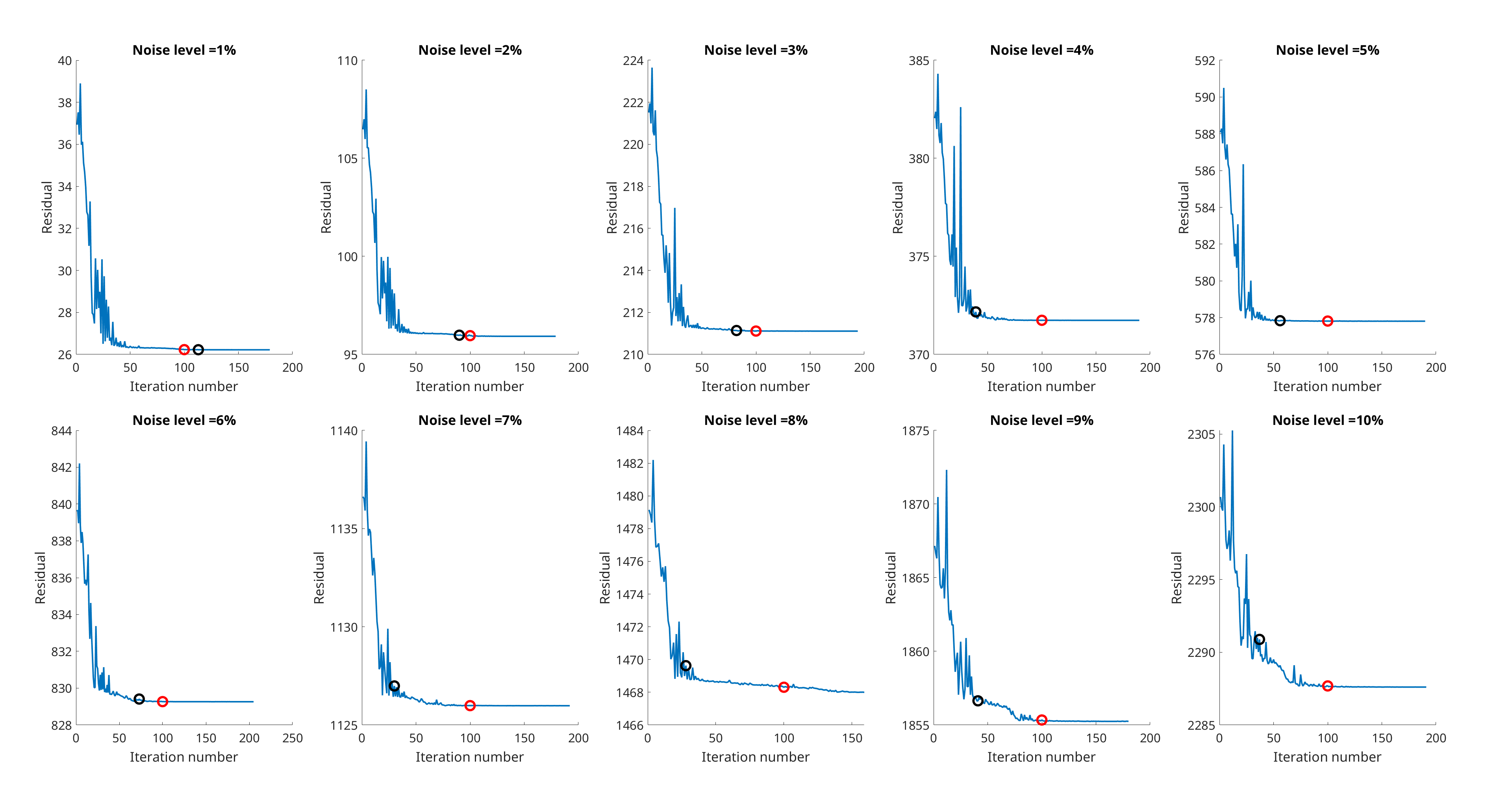}}
    \boxed{\includegraphics[width=\linewidth,clip=true,trim={3.2cm 3.2cm 5.2cm 3.5cm}]{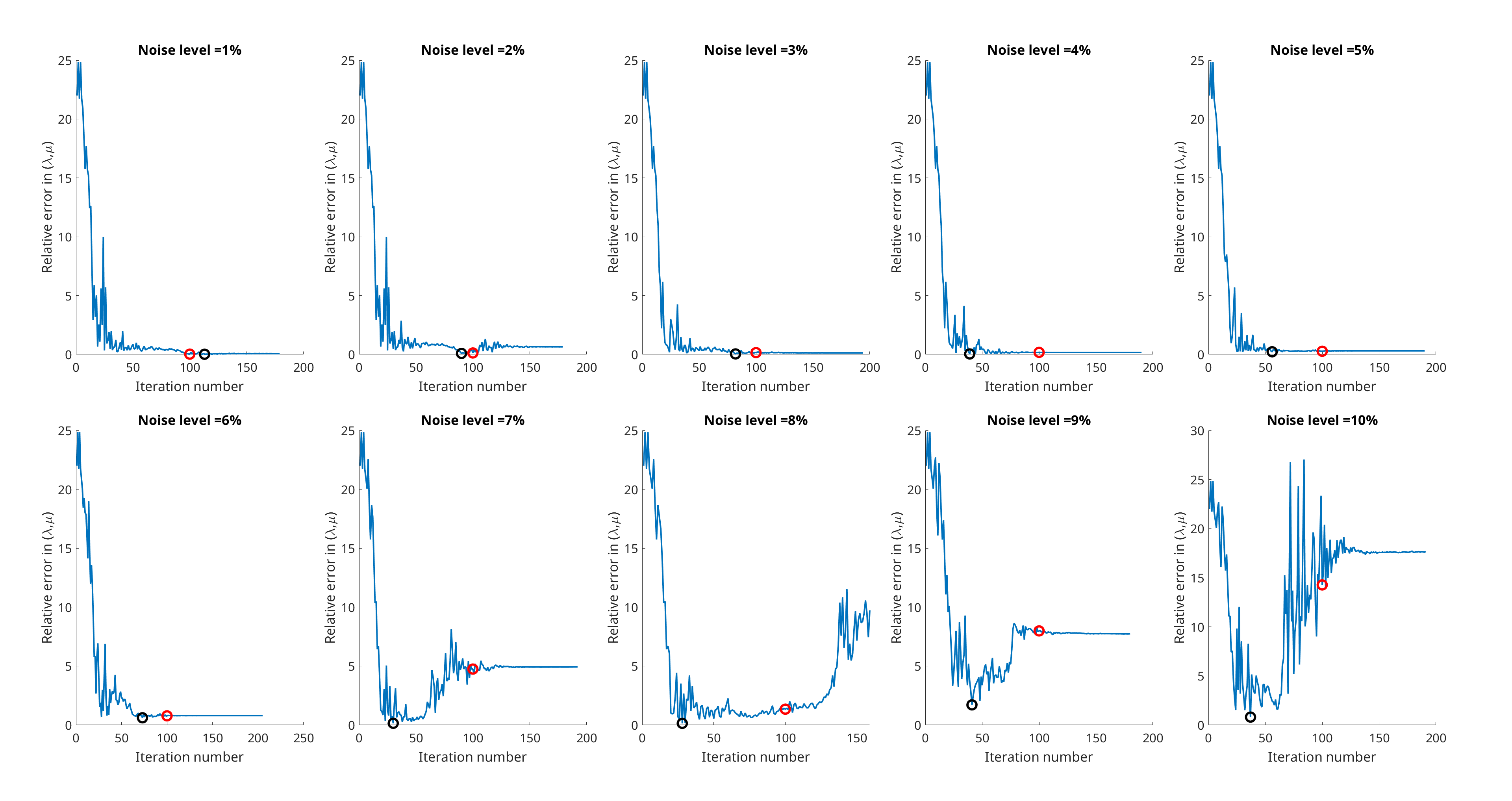}}  
    \caption{Numerical results with noisy images $\mI_1^\delta$ and $\mI_2^\delta$ for various (relative) noise levels $\delta$. Residual vs noise level (top) and relative error in $(\lambda, \mu)$ vs noise level (bottom). Iterates at which the minimal relative error is achieved are marked in black circles, while the $100$th iteration is marked in red circles.}
    \label{fig:noise-level-convergence}
\end{figure}

\begin{table}[ht!]
    \centering
    \resizebox{0.69\textwidth}{!}{\begin{tabular}{c|c|c|c|c|c|c|c|}
    \textbf{Noise} & \multicolumn{2}{c|}{\textbf{Background}} & \multicolumn{2}{c|}{\textbf{Inclusion}} & \multicolumn{3}{c|}{\textbf{Relative error}}\\
    \textbf{level, \%} & $\lambda$, kPa & $\mu$, kPa &$\lambda$, kPa & $\mu$, kPa & $\delta(\lambda)$, \% & $\delta(\mu)$, \% & $\delta(\lambda,\mu)$, \%\\
    \hline
    1 & 309.6878 & 34.2403 & 606.4352 & 67.9058 & 0.0124 & 0.0093 & 0.0155\\
    2 & 313.3472 & 34.9118 & 586.4783 & 66.7082 & 0.0766 & 0.0361 & 0.0847 \\
    3 & 315.8050 & 34.9571 & 636.7115 & 68.0436 & 0.0383 & 0.0185 & 0.0426 \\
    4 & 312.2792 & 34.7257 & 620.5090 & 71.7598 & 0.0030 &  0.0406 & 0.0407 \\
    5 & 318.0842 & 35.0892 & 669.3549 & 64.2301 & 0.1813 & 0.1311 & 0.2237 \\
    6 & 329.2834 & 36.6660 & 669.7078 & 63.0915 & 0.4034 & 0.4595 & 0.6115 \\
    7 & 306.5324 & 33.6843 & 664.3177 & 66.4250 & 0.1230 & 0.0743 & 0.1437 \\
    8 & 314.6889 & 35.1708 & 663.3236 & 66.1393 & 0.1194 & 0.0683 & 0.1375 \\
    9 & 357.5540 & 37.3752 & 608.8480 & 65.5647 & 1.6098 & 0.5667 & 1.7066 \\
    10 & 339.7543 & 34.6276 & 569.2395 & 67.3298 & 0.8036 & 0.0141 & 0.8038 
     \end{tabular}
    }
    \caption{Numerical results with noisy images $\mI_1^\delta$ and $\mI_2^\delta$ for various (relative) noise levels $\delta$. Recovered Lam\'e parameters and corresponding relative errors, each obtained using the optimal number of Nelder-Mead iterations.}
    \label{tab:noisy-best-iteration}
\end{table}

As noted above, for the minimization of the IIM functional \eqref{IIM_actual} the Nelder-Mead method is used, which is typically terminated when the standard deviation of the functional values falls below a certain tolerance. From the residual curves depicted in Figure~\ref{fig:noise-level-convergence} (top), one can see that for most noise levels, the residual is barely decreasing beyond the $100$th iteration, which is why it was chosen as the stopping index for all the tests presented in Table~\ref{tab:noisy-100th-iteration}. However, the relative error curves depicted in Figure~\ref{fig:noise-level-convergence} (bottom) show that for most noise levels, the optimal relative error is often already obtained after a (sometimes significantly) smaller number of iterations, where the residual still decreases. Table~\ref{tab:noisy-best-iteration} summarizes the recovered Lam\'e parameters corresponding to the ``optimal'' choice of stopping index, as well as the corresponding relative errors. As expected, the results are now significantly better than those presented in Table~\ref{tab:noisy-100th-iteration}, with the relative error never exceeding $2\%$ even for a noise level of $10\%$. However, while the relative error still decreases on average with decreasing noise level, it now no longer does so monotonically; cf.~Figure~\ref{fig:rates}~(right). Hence, while this ``optimal'' choice of stopping index is clearly not applicable in practice, it nevertheless indicates that the IIM can significantly benefit from a suitably terminated minimization procedure. Overall, our numerical tests demonstrate that both for noise-free and noisy images $\mI_1^\delta$ and $\mI_2^\delta$, and regardless of the chosen stopping index in the Nelder-Mead method, the IIM is able to stably and accurately reconstruct both Lam\'e parameters $\lambda,\mu$ in this case.

\subsection{Numerical results: effects of imprecise segmentation}
    \begin{figure}[ht!]
        \centering
        \includegraphics[width=0.25\linewidth,clip=true, trim={50pt 150pt 50pt 150pt}]{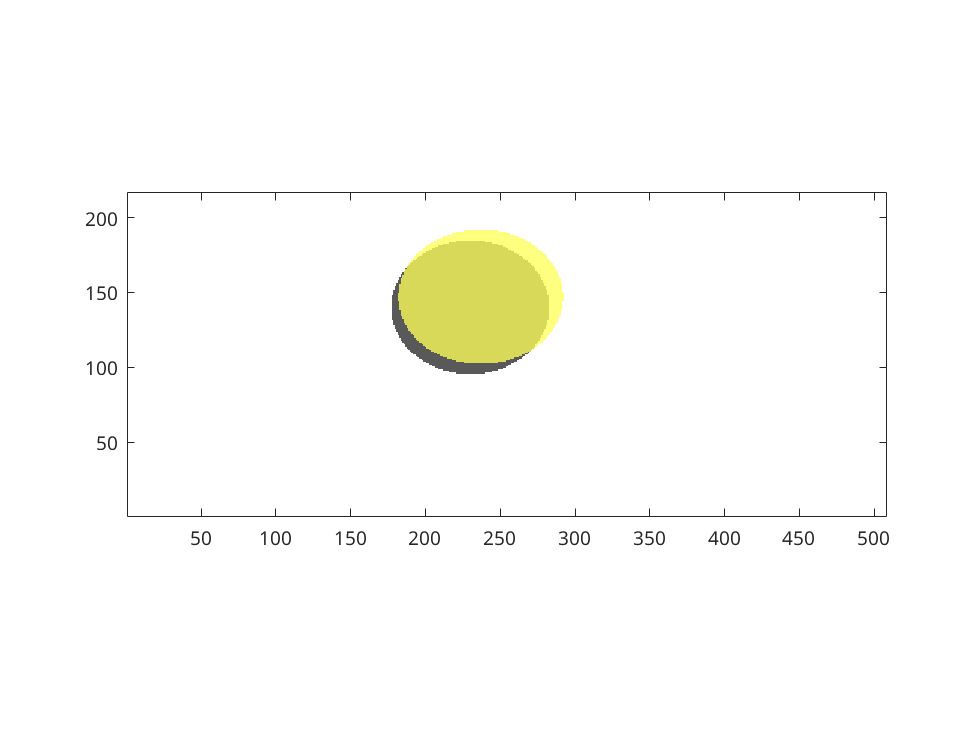}
         \includegraphics[width=0.25\linewidth, clip=true, trim={50pt 150pt 50pt 150pt}]{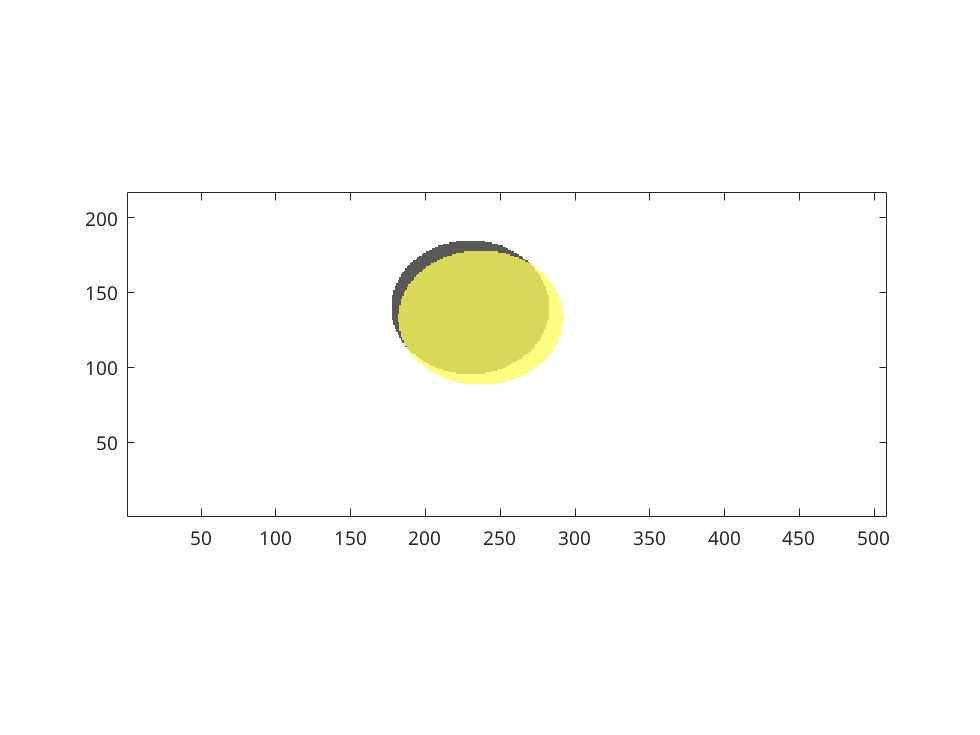}
        \caption{Sample with imprecisely segmented location and size of inclusion (yellow) superimposed with original inclusion (black). The imprecisely segmented inclusion is 6 pixels broader, and shifted by (7,7) pixels for Test5~(left) or by (7,-7) pixels for~Test6 (right), respectively.}
        \label{fig_imprecise}
    \end{figure}
    \begin{table}[ht!]
        \centering
        \resizebox{0.6\textwidth}{!}{
        \begin{tabular}{c|c|c|c|c|}
             & \multicolumn{2}{c|}{\textbf{Test5}} & \multicolumn{2}{c|}{\textbf{Test6}}  \\
             \hline
             Entity & $\lambda$, kPa & $\mu$, kPa & $\lambda$, kPa & $\mu$, kPa \\
             \hline
             Background & 322.9355 & 36.0826 & 279.4781 & 31.0844 \\
             Inclusion 1 & 607.0273 & 68.6364 & 627.4542 & 63.1942 \\
             \hline
             Relative error in $\lambda$ and $\mu$ (\%)  &  0.13 & 0.16 & 0.82 & 1.00 \\
             \hline
             Relative error in $(\lambda,\mu)$ (\%) &  \multicolumn{2}{c|}{0.21} & \multicolumn{2}{c|}{1.29}\\
             \hline
        \end{tabular}
        }
        \caption{Numerical results with noise-free images $\mI_1$ and $\mI_2$ of the synthetic OCT sample with a single inclusion, see Figure~\ref{fig_samples} (top), imprecisely  segmented as depicted in Figure~\ref{fig_imprecise}, (left) for Test5 and (right) for Test6.}
        \label{tab_imprecise}
    \end{table}

\noindent
In the numerical experiments conducted above, we have always assumed that the shape and location of the inclusions are known exactly. In physical experiments on samples or biopsies, this structural information is either obtained by directly segmenting the images $\mI_1$ and $\mI_2$, or via a complementary imaging modality. However, since real-world data are inevitably contaminated by noise, the resulting segmentation is imprecise as well, which can pose a challenge to the material parameter estimation in general, and our considered IIM realization in particular. However, the physical experiments conducted in \cite{Krainz_Sherina_Hubmer_Liu_Drexler_Scherzer_2022} show that the IIM is capable of handling real-world data, regardless of imprecisions in the segmentation. To further support these findings, we conducted two additional numerical experiments where the IIM is used with an imprecisely segmented inclusion, i.e., its location and shape are different from the true inclusion; see Test5 and Test6 depicted in Figure~\ref{fig_imprecise}. The reconstruction results are summarized in Table~\ref{tab_imprecise}, and show that although the resulting errors are higher than those in the corresponding Test3 in Table~\ref{tab:noise-free}, the obtained values of the Lam\'e parameters $\lambda,\mu$ are still fairly accurate. Note that when the imprecise location of the inclusion is closer to the compression boundary of the sample (Test5), the reconstruction is more accurate than when it is located further inside the sample (Test6).

\vspace{10pt}

\subsection{Numerical results: inclusions with sharp corners}

\begin{figure}[ht!]
        \centering
        \includegraphics[width=0.22\linewidth, clip=true, trim={50pt 150pt 50pt 150pt}]{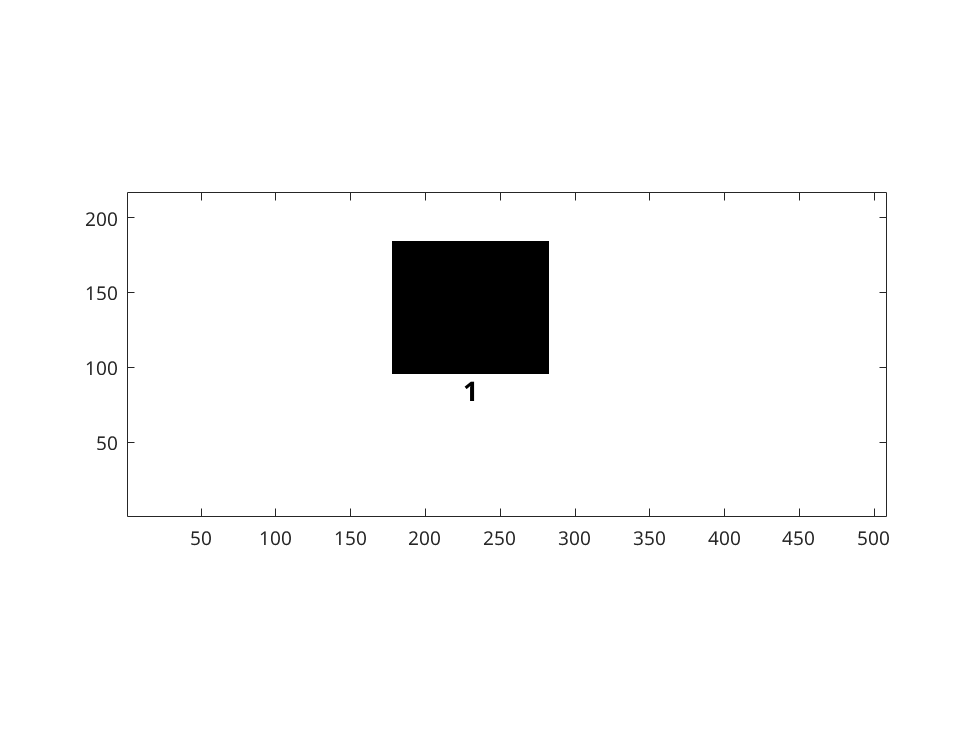}
        \includegraphics[width=0.25\linewidth, clip=true, trim={50pt 180pt 50pt 180pt}]{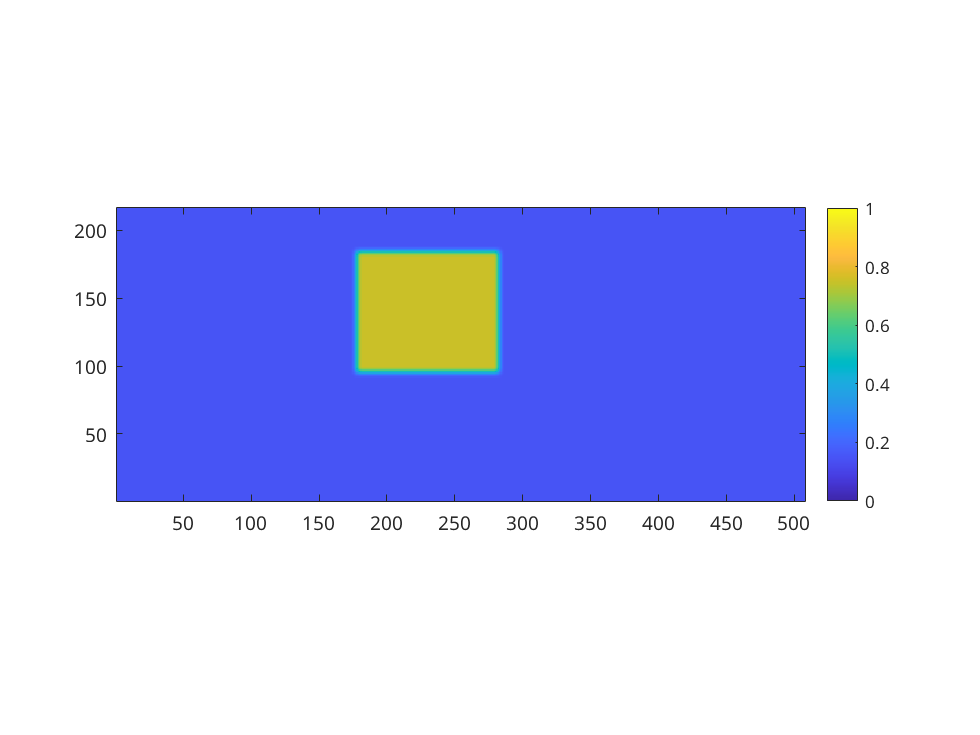}
        \includegraphics[width=0.25\textwidth, clip=true, trim={50pt 180pt 50pt 180pt}]{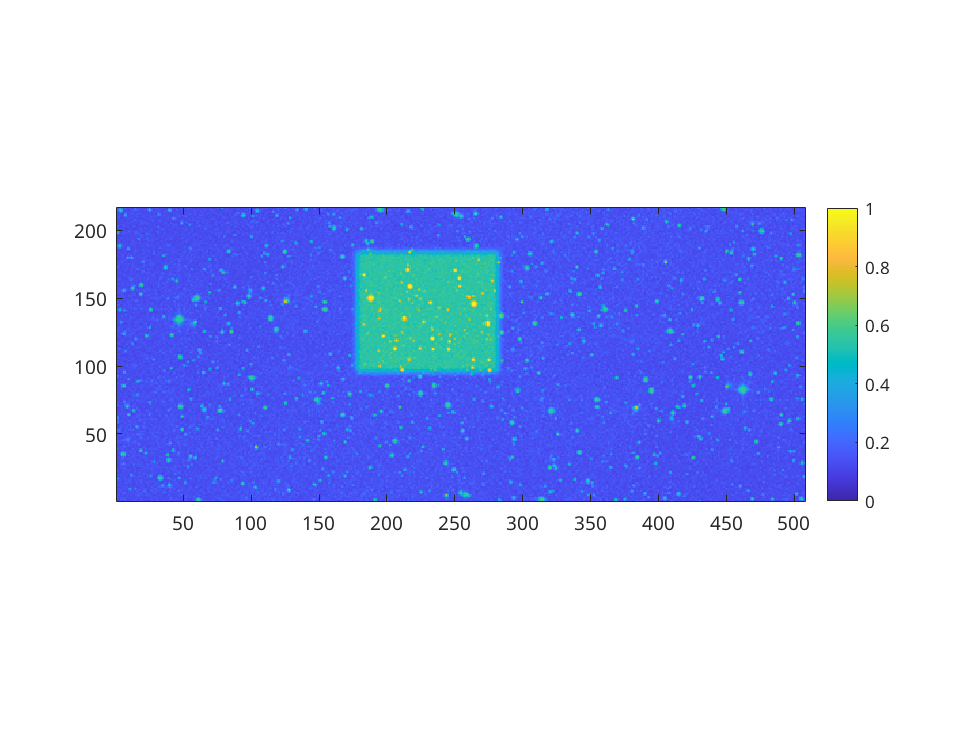}
        \includegraphics[width=0.25\textwidth, clip=true, trim={50pt 180pt 50pt 180pt}]{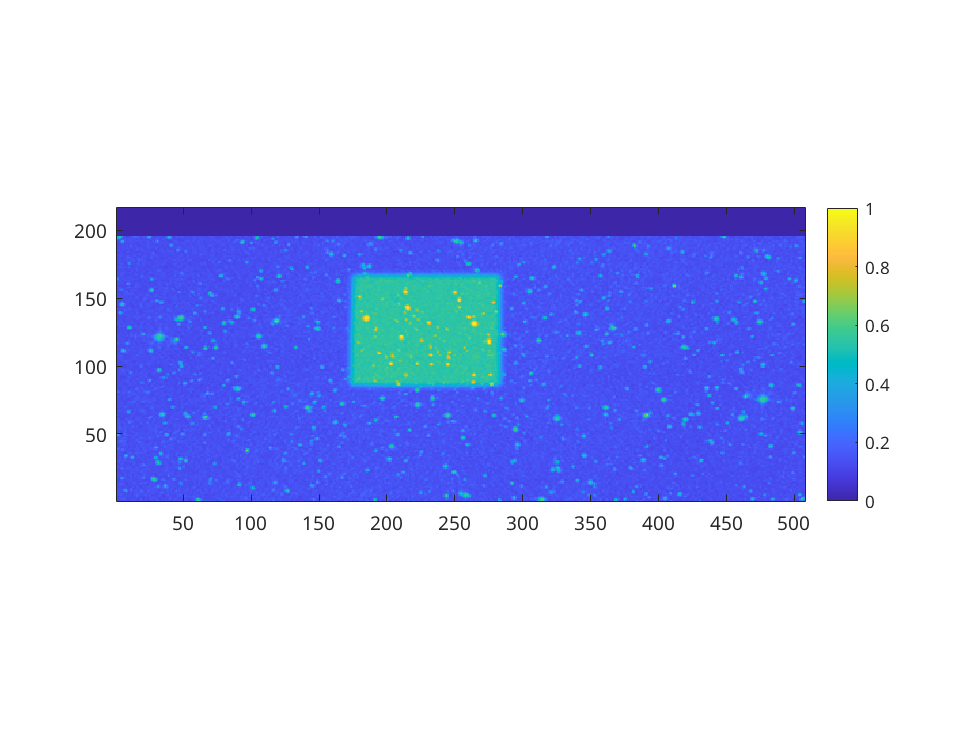}

        \includegraphics[width=0.22\linewidth, clip=true, trim={50pt 150pt 50pt 150pt}]{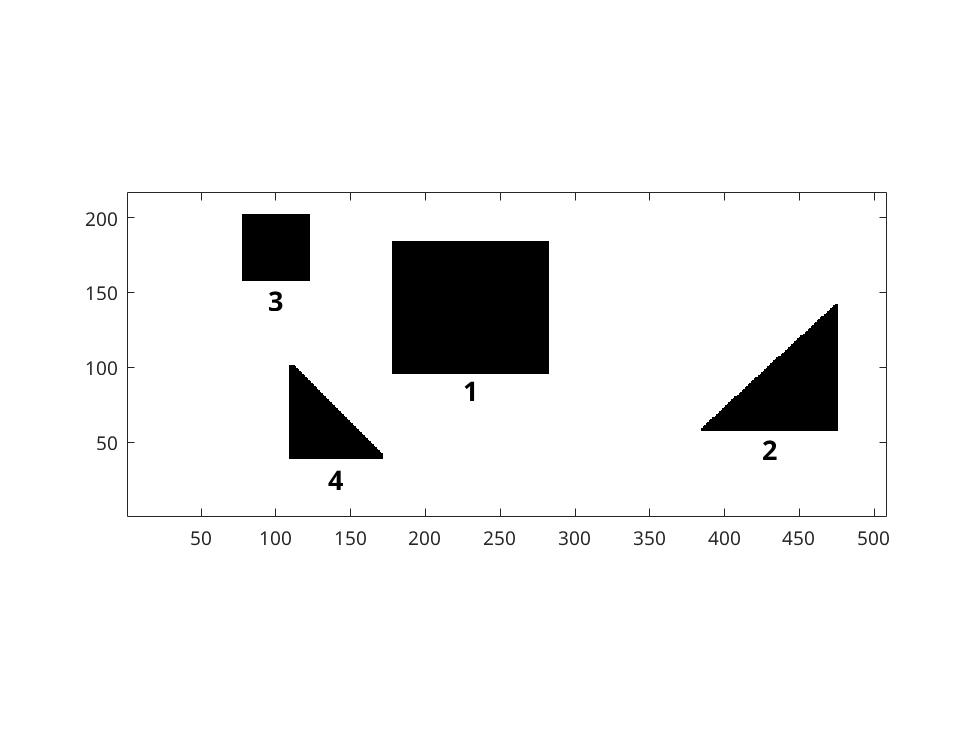}
        \includegraphics[width=0.25\linewidth, clip=true, trim={50pt 180pt 50pt 180pt}]{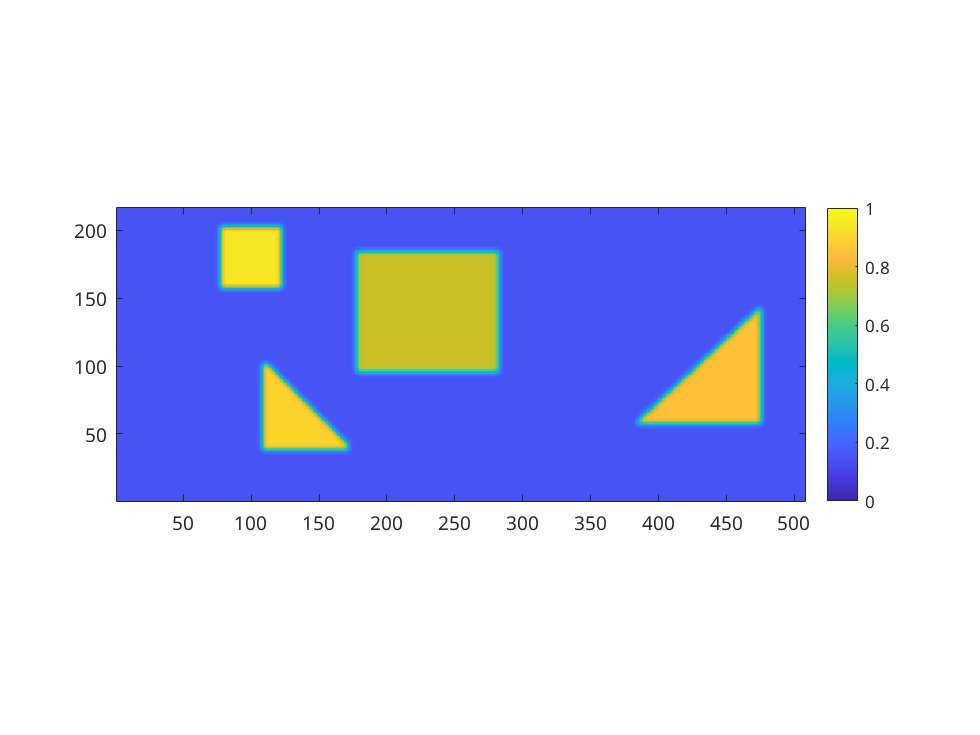}
        \includegraphics[width=0.25\textwidth, clip=true, trim={50pt 180pt 50pt 180pt}]{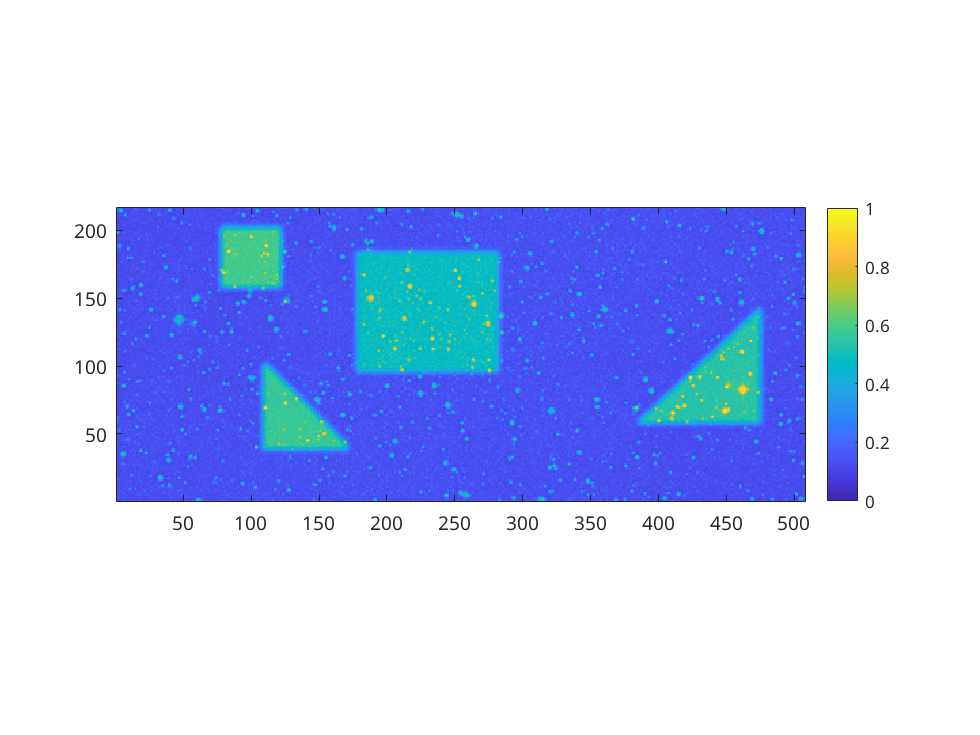}
        \includegraphics[width=0.25\textwidth, clip=true, trim={50pt 180pt 50pt 180pt}]{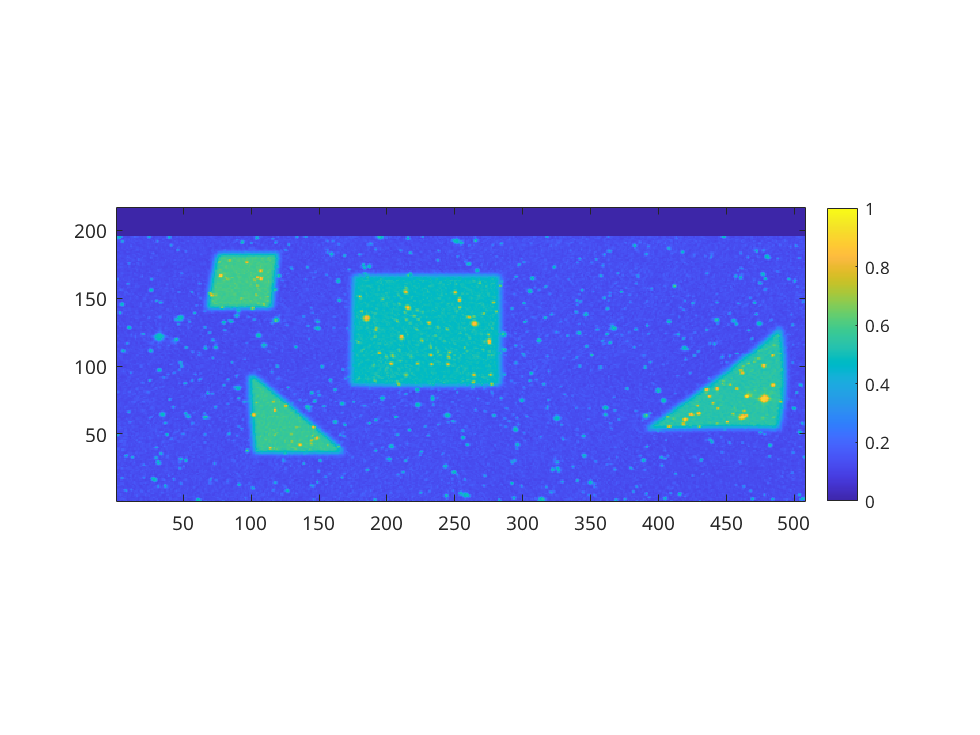}
        \caption{Samples containing inclusions with sharp corners. Schematic depiction of simulated material sample structure and synthetic OCT images before the application of artificial speckle (left, 1st and 2nd column). Inclusion numbering corresponding to Table~\ref{tab_ground_truth}. Synthetic OCT images (including speckle) before and after compression (right, 3rd and 4th column), corresponding to $\mI_1$ and $\mI_2$.}
        \label{fig_sharp_corners}
    \end{figure}

    \begin{table}[ht!]
        \centering
        \resizebox{0.6\textwidth}{!}{
        \begin{tabular}{c|c|c|c|c|}
             & \multicolumn{2}{c|}{\textbf{Test7}} & \multicolumn{2}{c|}{\textbf{Test8}}  \\
             \hline
             Entity & $\lambda$, kPa & $\mu$, kPa & $\lambda$, kPa & $\mu$, kPa \\
             \hline
             Background & 310.2610 & 34.4383 &  393.1418 & 43.9928 \\
             Inclusion 1 & 622.0583 & 68.7888 & 701.8782 & 79.9020 \\
             Inclusion 2 &  &  & 430.7879 & 24.0213 \\
             Inclusion 3 &  &  & 434.7446 & 35.8973 \\
             Inclusion 4 &  &  & 677.3655 & 75.9206 \\
             \hline
             Relative error in $\lambda$ and $\mu$ (\%)  & 0.0003 & 0.0001 & 7.1260 & 5.4498 \\
             \hline
             Relative error in $(\lambda,\mu)$ (\%) & \multicolumn{2}{c|}{0.0003} & \multicolumn{2}{c|}{8.9711} \\
             \hline
        \end{tabular}
        }
        \caption{Numerical results with noise-free images $\mI_1$ and $\mI_2$. Test7 considers the synthetic OCT sample with a single inclusion, see Figure~\ref{fig_sharp_corners} (top), while Test8 considers the sample with four inclusions; see Figure~\ref{fig_sharp_corners} (bottom). In Test7 and Test8 both Lam\'e parameters $\lambda,\mu$ are reconstructed.}
        \label{tab_sharp_corners}
    \end{table}

\noindent
In our fourth series of tests, we demonstrate the applicability of our IIM approach to samples containing inclusions with sharp corners, see Figure~\ref{fig_sharp_corners}. The samples are created following the same workflow as described in Section~\ref{sect_problem_setting}, and are assigned the same ground-truth values of the Lam\'e parameters $\lambda,\mu$ as those given in Table~\ref{tab_ground_truth}. The resulting reconstructions obtained via the IIM are summarized in Table~\ref{tab_sharp_corners}. Therein, Test7 corresponds to the sample with one inclusion and Test8 to that with four inclusions. In terms of precision, the results match the reconstructions in the noise-free case with round inclusions investigated in Test3 and Test4. The most likely reason why sharp corners do not cause significant challenges to our considered IIM realization is that in our tests, we assume that the location and shape of the inclusions are known. This should be compared with the results obtained via the idealized two-step approach discussed in the next section; see Test11 and Test12 in Figure~\ref{fig_nli_iim_comparison_corners} and Table~\ref{tab_nli_iim_comparison}.

\subsection{Numerical results: comparison to a two-step approach}\label{subsect_NLI}

    \begin{figure}[ht!]
        \centering
        \includegraphics[width=0.49\linewidth, clip=true, trim={50pt 50pt 20pt 40pt}]{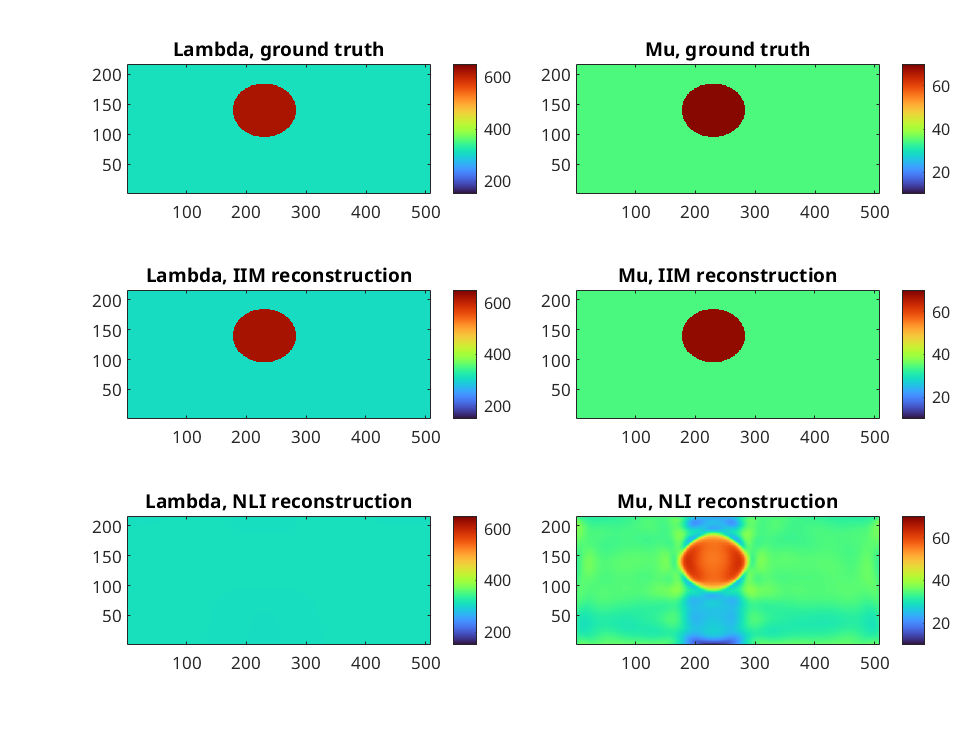}
        \includegraphics[width=0.49\linewidth, clip=true, trim={50pt 50pt 20pt 40pt}]{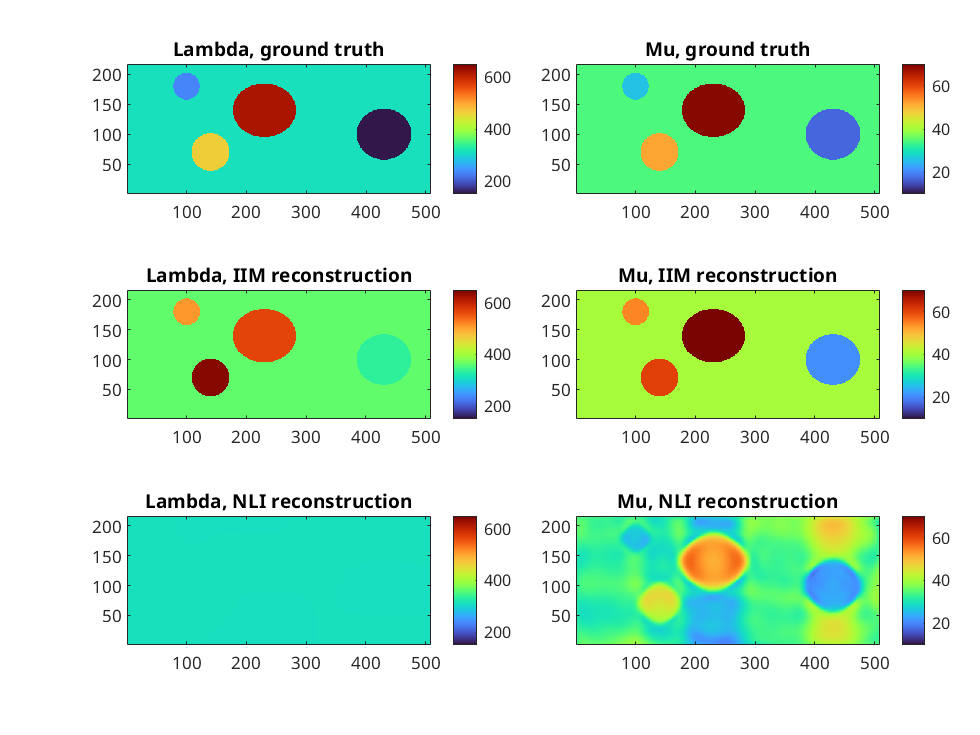}
        \caption{Ground-truth values of the Lam\'e parameters $\lambda, \mu$, in kPa, (top). Reconstruction results with the IIM (middle) and NLI (bottom) for samples with one and four inclusion, corresponding to Test9 (left) and Test10 (right), see Figure~\ref{fig_samples_setup}.}
        \label{fig_nli_iim_comparison}
    \end{figure}
    \begin{figure}[ht!]
        \centering
        \includegraphics[width=0.49\linewidth, clip=true, trim={50pt 50pt 20pt 40pt}]{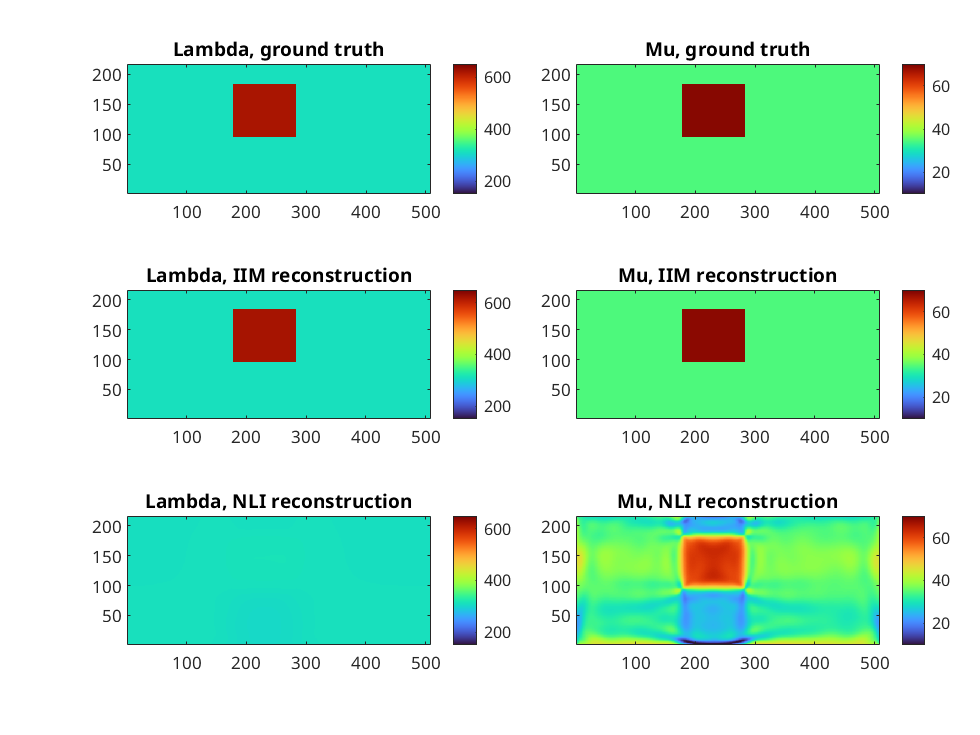}
        \includegraphics[width=0.49\linewidth, clip=true, trim={50pt 50pt 20pt 40pt}]{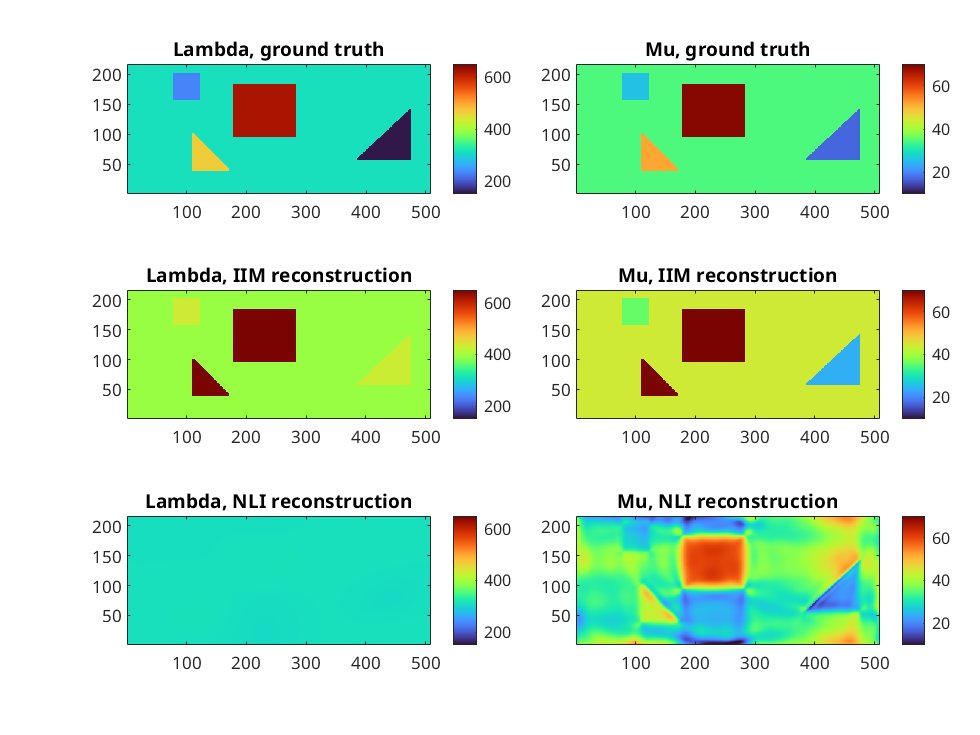}
        \caption{Ground-truth values of the Lam\'e parameters $\lambda, \mu$, in kPa, (top). Reconstruction results with the IIM (middle) and NLI (bottom) for samples with one and four inclusions with sharp corners, corresponding to Test11 (left) and Test12 (right), see Figure~\ref{fig_sharp_corners}.}
        \label{fig_nli_iim_comparison_corners}
    \end{figure}

    \begin{table}[ht!]
        \centering
        \resizebox{0.9\textwidth}{!}{
        \begin{tabular}{c|c|c|c|c|c|c|c|c|}
             & \multicolumn{2}{c|}{\textbf{Test9}} & \multicolumn{2}{c|}{\textbf{Test10}} & \multicolumn{2}{c|}{\textbf{Test11}} & \multicolumn{2}{c|}{\textbf{Test12}} \\
             \hline
             Entity & $\lambda$, kPa & $\mu$, kPa & $\lambda$, kPa & $\mu$, kPa & $\lambda$, kPa & $\mu$, kPa & $\lambda$, kPa & $\mu$, kPa \\
             \hline
             Background & 310.2504 & 32.9527 & 310.3402 & 33.6732 & 309.9822 & 33.3221 & 310.1802 & 33.5326 \\
             Inclusion 1 & 311.1474 & 56.4499 & 310.8157 & 51.7839 & 312.9600 & 58.0439 & 312.1196 & 56.1970 \\
             Inclusion 2 &  &  & 309.2315 & 23.2383 &  &  & 306.8507 & 22.2099 \\
             Inclusion 3 &  &  & 310.1465 & 26.8231 &  &  & 308.5985 & 26.9367 \\
             Inclusion 4 &  &  & 310.7073 & 43.4618 &  &  & 312.1722 & 43.3582 \\
             \hline
             Relative error in $\lambda$ and $\mu$ (\%)  & 22.7093 & 11.9388 & 23.7932 & 16.6251 & 26.1704 & 13.4400 & 26.6889 & 17.8903 \\
             \hline
             Relative error in $(\lambda,\mu)$ (\%) & \multicolumn{2}{c|}{25.6563} & \multicolumn{2}{c|}{29.0260} & \multicolumn{2}{c|}{29.4198} & \multicolumn{2}{c|}{32.1304} \\
             \hline
        \end{tabular}
        }
        \caption{Numerical results with an idealized two-step NLI approach using given exact displacement fields. Test9 and Test11 consider the synthetic OCT sample with a single inclusion, see~Figure~\ref{fig_samples} (top) and Figure~\ref{fig_sharp_corners} (top), while Test10 and Test12 consider the samples with four inclusions; see~Figure~\ref{fig_samples} (bottom) and Figure~\ref{fig_sharp_corners} (bottom). In all tests both Lam\'e parameters $\lambda,\mu$ are unknown. The given values for the Lam\'e parameters are mean values computed for each structure area of the considered sample: for Test9 from the reconstruction in Figure~\ref{fig_nli_iim_comparison} (left, bottom), for Test10 from the reconstruction in Figure~\ref{fig_nli_iim_comparison} (right, bottom), for Test11 from the reconstruction in Figure~\ref{fig_nli_iim_comparison_corners} (left, bottom) and for Test12 from the reconstruction in Figure~\ref{fig_nli_iim_comparison_corners} (right, bottom).}
        \label{tab_nli_iim_comparison}
    \end{table}

\noindent   
In the fifth series of tests, we compare our IIM with a particular instance of an (idealized) two-step approach. As noted in the introduction, two-step approaches first estimate the internal displacement field from given intensity images (here $\mI_1$ and $\mI_2$) and then, in a second step, estimate the material parameters from the computed displacement fields. Each step of this approach involves the solution of an inverse problem from potentially noisy data, which are solved sequentially, and thus noise and inversion errors are introduced effectively ``twice'' into the process of estimating the material parameters. In contrast, our IIM approach avoids this separation of steps, and directly estimates the unknown material parameters from the given intensity images. In this sense, the IIM (as well as other one-step methods) can be said to be more stable/robust with respect to noise and numerical errors. Hence, in order to provide a fair comparison, our chosen two-step approach is idealized in the following way: we assume that the first step can be solved exactly, i.e., the internal displacement field is known precisely. For the second step, we then use a nonlinear Landweber iteration (NLI) as proposed in \cite{Hubmer_Sherina_Neubauer_Scherzer_2018}, which is guaranteed to converge locally under standard assumptions. In total, we conduct four numerical experiments comparing our IIM with this idealized two-step approach, which are summarized in Figure~\ref{fig_nli_iim_comparison} (Test9 and Test10), Figure~\ref{fig_nli_iim_comparison_corners} (Test11 and Test12), and Table~\ref{tab_nli_iim_comparison}. Note that these tests also include the case of samples containing inclusions with sharp corners (Test11 and Test12). In order to compare the spatial reconstructions of the NLI approach with the reconstructions obtain with the IIM, we compute mean values for the reconstructed Lam\'e parameters $\lambda, \mu$ per structure area in Table~\ref{tab_nli_iim_comparison}. As can be observed from these results, the two-step NLI is able to recover the structure of all four samples in the Lam\'e parameter $\mu$, but not in $\lambda$. Furthermore, it also struggles to retrieve the correct contrast, and the quantitative accuracy is lost, resulting in an overall relative error in $\lambda, \mu$ of up to 25-32\%. Recall that in the idealized two-step approach used for these tests, we assume that the displacement field serving as input to NLI is given exactly, and thus in practice, where the displacement field first has to be estimated from intensity images, the overall accuracy will be even worse. On the other hand, the structure of the sample is used as prior information in the version of IIM applied in our numerical experiments. This a-priori information reduces the dimensionality of the problem, thereby helping with the quantitative correctness of the results, see Test3 and Test4 in Table~\ref{tab:noise-free}. This feature of IIM was found to be beneficial in practice for certain types of real-world data, when the sample structure is either segmented out from the data itself, or obtained via a complementary imaging modality \cite{Krainz_Sherina_Hubmer_Liu_Drexler_Scherzer_2022}.

\subsection{The IIM for experimental data}

Finally, note that the IIM has already been successfully applied for material parameter estimation in practice, in particular for the recovery of Young's modulus $E$ and the Poisson ratio $\nu$ in a quasi-static OCE experiment \cite{Krainz_Sherina_Hubmer_Liu_Drexler_Scherzer_2022}. There, the obtained recoveries of $E$ for a set of analyzed silicone rubber samples were found to be in good agreement with independently measured ground truth values. Since the difficulty in recovering Young's modulus $E$ and the Lam\'e parameter $\mu$ is roughly equivalent (and similarly for $\nu$ and $\lambda$), our numerical results are in good alignment with these practical findings.

\section{Conclusion}\label{sect_conclusion}

In this paper, we considered the intensity-based inversion method (IIM) for quantitative material parameter estimation in quasi-static elastography. As a one-step approach combining both image registration and regularized parameter reconstruction, the IIM has several advantages over two-step elastography approaches, which typically perform displacement field or strain reconstruction and parameter estimation separately. In particular, the IIM avoids some approximations and derivative computations commonly encountered in two-step approaches, and is thus more stable to data noise and model inaccuracies. Within the framework of inverse problems, we showed that the IIM is in fact a convergent regularization method with order-optimal rate of convergence under standard assumptions. Furthermore, we showed that these assumptions are for example satisfied in the practically relevant case of linear elastography. Finally, we discussed the implementation of the IIM and demonstrated its practical usefulness on numerical examples simulating an optical coherence elastography (OCE) experiment.

\section{Support}

This research was funded in part by the Austrian Science Fund (FWF) SFB 10.55776/F68 ``Tomography Across the Scales'', project F6805-N36 (Tomography in Astronomy) and project F6807-N36 (Tomography with Uncertainties), as well as 10.55776/P34981 ``New Inverse Problems of Super-Resolved Microscopy (NIPSUM)''. For open access purposes, the authors have applied a CC BY public copyright license to any author-accepted manuscript version arising from this submission. The financial support by the Austrian Federal Ministry for Digital and Economic Affairs, the National Foundation for Research, Technology and Development and the Christian Doppler Research Association is gratefully acknowledged.

\bibliographystyle{siam}
{\footnotesize
\bibliography{my.bib}
}

\newpage
\appendix

\end{document}